\newtheorem{lem}{Lemma}[section]
\newtheorem{thm}[lem]{Theorem}
\newtheorem{prop}[lem]{Proposition}
\newtheorem{cor}[lem]{Corollary}
\newtheorem{conj}[lem]{Conjecture}
\theoremstyle{definition}
\newtheorem{remark}[lem]{Remark}
\newtheorem{definition}[lem]{Definition}
\DeclareMathAlphabet{\curly}{U}{rsfs}{m}{n}
\newcommand{\Q}{\mathbb{Q}}
\newcommand{\C}{\mathbb{C}}
\newcommand{\Z}{\mathbb{Z}}
\newcommand{\F}{\mathbb{F}}
\newcommand{\GL}{\operatorname{GL}}
\newcommand{\SL}{\operatorname{SL}}
\mathchardef\mhyphen="2D
\title{Tetragonal intermediate modular curves}
\author{\sc Petar Orli\'c}
\address{Petar Orli\'c \\
University of Zagreb\\  
Bijeni\v{c}ka Cesta 30 \\
10000 Zagreb\\
Croatia}
\email{petar.orlic@math.hr}
\begin{document}
\begin{abstract}
    For every group $\{\pm1\}\subseteq \Delta\subseteq (\Z/N\Z)^\times$, there exists an intermediate modular curve $X_\Delta(N)$. In this paper we determine all curves $X_\Delta(N)$ whose $\Q$-gonality is equal to $4$, all curves $X_\Delta(N)$ whose $\C$-gonality is equal to $4$, and all curves $X_\Delta(N)$ whose $\Q$-gonality is equal to $5$. We also determine the $\Q$-gonality of all curves $X_\Delta(N)$ for $N\leq 40$ and $\{\pm1\}\subsetneq \Delta \subsetneq (\Z/N\Z)^\times$.
\end{abstract}

\subjclass{11G18, 11G30, 14H30, 14H51}
\keywords{Modular curves, Gonality}

\thanks{The author was supported by the project “Implementation of cutting-edge research and its application as part of the Scientific Center of Excellence for Quantum and Complex Systems, and Representations of Lie Algebras“, PK.1.1.02, European Union, European Regional Development Fund, and by the Croatian Science Foundation under the project no. IP-2022-10-5008.}

\maketitle

\section{Introduction}\label{introductionsection}

Modular curves $X(\Gamma)$ are a type of algebraic curves which can be constructed as quotients of the compactified upper half plane $\mathcal{H}^*$ with $\Gamma$, a congruence subgroup of the modular group $\textup{SL}_2(\Z)$. Most common modular curves are $X(N)$, $X_0(N)$, $X_1(N)$ which correspond to congruence subgroups

\begin{align*}
    \Gamma(N)&=\left\{ 
\begin{bmatrix}
a & b\\
c & d
\end{bmatrix}
\in \SL_2(\Z) : a,d\equiv 1\pmod{N}, \ b,c\equiv 0\pmod{N}
\right\},\\
\Gamma_0(N)&=\left\{ 
\begin{bmatrix}
a & b\\
c & d
\end{bmatrix}
\in \SL_2(\Z) : c\equiv 0\pmod{N} \right\},\\
\Gamma_1(N)&=\left\{ 
\begin{bmatrix}
a & b\\
c & d
\end{bmatrix}
\in \SL_2(\Z) : a,d\equiv 1\pmod{N}, \ c\equiv 0\pmod{N}
\right\}.
\end{align*}

For every group $\Delta\subseteq (\Z/N\Z)^\times$, there exists a modular curve $X_\Delta(N)$ defined over $\Q$. It corresponds to the congruence subgroup 
$$\Gamma_\Delta(N)=\left\{ 
\begin{bmatrix}
a & b\\
c & d
\end{bmatrix}
\in \SL_2(\Z) : (a\textup{ mod } N)\in\Delta, \ c\equiv 0\pmod{N}
\right\}.$$

Since $-I$ acts trivially on the upper half plane $\mathcal{H}^*$ (where $I\in\SL_2(\Z)$ is an identity matrix), the curves $X_\Delta(N)$ and $X_{\pm\Delta}(N)$ are isomorphic. Therefore, in this paper we will always assume that $-1\in\Delta$.

For every group $\{\pm1\}\subseteq \Delta\subseteq (\Z/N\Z)^\times$, there exists an intermediate modular curve $X_\Delta(N)$ lying between the curves $X_1(N)$ and $X_0(N)$. Notice that, when $\Delta=(\Z/N\Z)^\times$, the curve $X_\Delta(N)$ is actually the curve $X_0(N)$ and that, when $\Delta=\{\pm1\}$, the curve $X_\Delta(N)$ is the curve $X_{\pm1}(N)$ which is isomorphic to the curve $X_1(N)$. Moreover, if 
$$\{\pm1\}\subseteq \Delta_1\subseteq \Delta_2 \subseteq (\Z/N\Z)^\times,$$
then we have natural projections $$X_1(N)\to X_{\Delta_1}(N)\to X_{\Delta_2}(N)\to X_0(N)$$ defined over $\Q$.

It is also possible to define modular curves in another way. For every group $H\subseteq \GL_2(\Z/N\Z)$, there exists a modular curve $X_H$ defined over $\Q$. Using the same argument as before with the curves $X_\Delta(N)$, we may assume that $-I\in H$ without loss of generality. If $H$ has full determinant (that is, if $\det H=(\Z/N\Z)^\times$), the curve $X_H$ is ensured to be geometrically irreducible.

Suppose that $H\subseteq \GL_2(\Z/N\Z)$ such that $-I\in H$ and that $H$ has full determinant. Then there is a congruence subgroup $\Gamma$ such that the curves $X_H$ and $X(\Gamma)$ are isomorphic. It is defined as follows:
$$H_0:=\SL_2(\Z/N\Z)\cap H, \ \Gamma:=\{A\in\SL_2(\Z) : (A \textup{ mod } N) \in H_0\}.$$
It is not hard to check that $\Gamma(N)\subseteq \Gamma$, therefore $\Gamma$ is indeed a congruence subgroup of $\SL_2(\Z)$. Conversely, in the case of intermediate modular curves $X_\Delta(N)$, its isomorphic curve $X_H$ is defined as 
$$H:=\left\{  
\begin{bmatrix}
a & b\\
c & d
\end{bmatrix}
\in \GL_2(\Z/N\Z) : a\in\Delta, \ c=0
\right\}.$$
We can easily see that $-I\in H$ and that $H$ has full determinant.

Now we define the gonality of a curve. Let $C$ be a smooth projective curve over a field $k$. The $k$-gonality of $C$, denoted by $\textup{gon}_k C$, is the least degree of a non-constant $k$-rational morphism $f:C\to\mathbb{P}^1$.

The gonality of modular curves has been extensively studied. Zograf \cite{Zograf1987} gave a lower bound for the $\C$-gonality for any modular curve, linear in terms of the index of the corresponding congruence subgroup. Later, Abramovich \cite{abramovich} and Kim and Sarnak \cite[Appendix 2]{Kim2002} improved the constant in that bound.

Regarding the curve $X_0(N)$, Ogg \cite{Ogg74} determined all hyperelliptic, Bars \cite{Bars99} determined all bielliptic, Hasegawa and Shimura \cite{HasegawaShimura_trig} determined all trigonal curves $X_0(N)$ over $\C$ and $\Q$, and Jeon and Park \cite{JeonPark05} determined all tetragonal curves $X_0(N)$ over $\C$. Recently, Najman and Orlić \cite{NajmanOrlic22} determined all curves $X_0(N)$ with $\Q$-gonality equal to $4,5,$ or $6$. They also determined the $\Q$-gonality of all curves $X_0(N)$ for $N\leq144$ and for many of those curves also determined their $\C$-gonality.

Regarding the curve $X_1(N)$, Kenku and Momose \cite[p. 126]{KM88} determined all hyperelliptic, Jeon, Kim, and Schweizer \cite[Theorem 2.3]{JeonKimSchweizer04} determined all trigonal curves $X_1(N)$ over $\C$ and $\Q$, Jeon, Kim, and Park \cite[Theorem 2.6]{JeonKimPark06} determined all tetragonal curves $X_1(N)$ over $\C$ and $\Q$, and Derickx and van Hoeij \cite[Proposition 6]{derickxVH} determined all curves $X_1(N)$ with $\Q$-gonality equal to $d$ for $d=5,6,7,8$. They also determined the $\Q$-gonality of the curve $X_1(N)$ for $N\leq40$ and gave upper bounds on the $\Q$-gonality for $N\leq250$.

Now we move on to the intermediate modular curves $X_\Delta(N)$. Ishii and Momose \cite{IshiiMomose} determined (although with a slight error regarding the curve $X_{\Delta_1}(21)$) all hyperelliptic curves $X_\Delta(N)$ and Jeon and Kim \cite{Jeon2007} determined all trigonal curves $X_\Delta(N)$ over $\C$ and fixed this error. Jeon, Kim, and Schweizer \cite{JEON2020272} also determined all bielliptic curves $X_\Delta(N)$. Derickx and Najman \cite[Table 1]{DerickxNajman2023} determined the fields of definition of trigonal maps for genus $4$ curves $X_\Delta(N)$. This, together with the information about $\C$-trigonal curves $X_\Delta(N)$ from \cite{Jeon2007}, determines all curves $X_\Delta(N)$ which are trigonal over $\Q$.

\begin{thm}\cite[Table 1]{DerickxNajman2023}\label{CtrigonalQtetragonalthm}
    The genus $4$ intermediate modular curves $X_\Delta(N)$ that are trigonal over $\Q$ are
    \begin{align*}
        (N,\Delta)\in\{&(26,\{\pm1,\pm5\}),(26,\{\pm1,\pm3,\pm9\}),(28,\pm1,\pm3,\pm9),(28,\{\pm1,\pm13\}),\\
        &(29,\left<-1,4\right>),(37,\left<-1,8\right>),
        (37,\left<-1,4\right>),(50,\{\pm1,\pm9,\pm11,\pm19,\pm21\})\}.
    \end{align*}
    The only genus $4$ intermediate modular curve that is not trigonal over $\Q$ is $(N,\Delta)=(25,\{\pm1,\pm7\})$. For expository reasons, for larger groups $\Delta$ we give only their generators instead of all their elements.
\end{thm}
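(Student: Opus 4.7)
The strategy is to enumerate all genus-$4$ intermediate modular curves and, for each, decide whether a trigonal pencil is $\Q$-rational by analysing the unique quadric containing its canonical model.

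First, I would list all pairs $(N,\Delta)$ with $\{\pm 1\}\subsetneq\Delta\subsetneq(\Z/N\Z)^\times$ for which $X_\Delta(N)$ has genus $4$, using the standard index formula for $\Gamma_\Delta(N)$ together with the counts of elliptic and cuspidal points. Since $X_\Delta(N)\to X_0(N)$ is surjective and the genus of $X_0(N)$ grows with $N$, only finitely many $N$ need to be checked. Comparing the resulting list with the Ishii--Momose classification of hyperelliptic intermediate curves shows that none of the nine candidates is hyperelliptic, so each has $\C$-gonality exactly $3$ by the classical geometry of the canonical model of a non-hyperelliptic genus-$4$ curve.

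Next, for each such $X := X_\Delta(N)$ I would work with the canonical model $X\hookrightarrow\PP^3_\Q$. It is classical that such $X$ lies on a unique irreducible quadric $Q$ defined over $\Q$, and that the trigonal pencils on $X_{\overline{\Q}}$ correspond bijectively to the rulings of $Q_{\overline{\Q}}$. If $\rk Q = 3$, then $Q$ is a cone with a single ruling, so $X$ has a unique $g^1_3$ which automatically descends to $\Q$. If $\rk Q = 4$, then $Q_{\overline{\Q}}\cong\PP^1\times\PP^1$ has two rulings, and each is individually $\Q$-rational if and only if the discriminant of $Q$ is a square in $\Q^\times$ (the presence of a $\Q$-rational cusp on $X$ guarantees $Q(\Q)\neq\emptyset$, which is the remaining condition for $Q$ to split as $\PP^1\times\PP^1$ over $\Q$). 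So it suffices to compute $Q$ and read off its rank and, in the rank-$4$ case, its discriminant class in $\Q^\times/\Q^{\times 2}$.

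Concretely, for each of the nine candidates I would produce a $\Q$-rational basis of $S_2(\Gamma_\Delta(N))$ by taking the $\Delta$-invariants inside $S_2(\Gamma_1(N))$ and descending the resulting Hecke module to $\Q$; substituting the $q$-expansions into the degree-$2$ monomials and solving a small linear system yields the defining equation of $Q$. The main obstacle is exactly this computational step: extracting a genuinely $\Q$-rational $q$-expansion basis of $S_2(\Gamma_\Delta(N))$ when $\Delta$ mixes several nebentypus components of $S_2(\Gamma_1(N))$, and doing so carefully enough that no spurious number-field scalars contaminate the discriminant of $Q$. Once the nine quadrics are in hand, the exceptional curve $(N,\Delta)=(25,\{\pm 1,\pm 7\})$ should be identified as the unique one whose canonical quadric is smooth of non-square discriminant, isolating it from the other eight $\Q$-trigonal cases.
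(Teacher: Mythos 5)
The paper does not actually prove this statement: it is imported wholesale from Derickx--Najman (their Table~1 on fields of definition of trigonal maps for the genus~$4$ curves $X_\Delta(N)$), combined with the list of $\C$-trigonal intermediate curves from Jeon--Kim, so there is no internal proof to compare against. Judged on its own terms, your proposal is the natural and essentially correct route, and it is in substance the standard argument behind the cited result: enumerate the genus~$4$ curves $X_\Delta(N)$ (finitely many levels, by the genus growth of $X_0(N)$ and the projection $X_\Delta(N)\to X_0(N)$), note via Ishii--Momose that none is hyperelliptic (the only hyperelliptic intermediate curve other than $X_0,X_1$ is $X_{\{\pm1,\pm8\}}(21)$, of genus~$3$), and then decide rationality of the $g^1_3$'s through the unique quadric $Q\subset\PP^3$ containing the canonical model: rank~$3$ gives a unique, hence Galois-stable, pencil, while in rank~$4$ the two rulings are swapped by $\mathrm{Gal}(\Q(\sqrt{\det Q})/\Q)$, so a ruling is $\Q$-rational iff the discriminant is a square. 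Two small points to make explicit: (i) even when a ruling (or the unique ruling of the cone) is Galois-stable, you need a rational point to conclude that the associated degree-$3$ map lands in $\PP^1_\Q$ rather than a nontrivial conic; the rational cusp supplies this in the rank-$3$ case as well (project from the vertex, which does not lie on the curve, and note the cusp maps to a rational point of the base conic), not only in the rank-$4$ case where you invoke it; and (ii) the argument ultimately rests on the explicit computation of the nine quadrics from $\Q$-rational bases of the $\Delta$-invariant weight-$2$ cusp forms, which you correctly flag as the real work --- this is exactly the kind of computation carried out in the cited source and in the present paper's model-building (the canonical models via vanishing quadrics). With those caveats the proposed proof is sound and would recover the stated list, including the single non-$\Q$-trigonal curve $(25,\{\pm1,\pm7\})$, whose $\Q$-gonality is then $4$ as recorded in the paper's Table~\ref{tab:main1}.
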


The next logical step is to determine all tetragonal curves $X_\Delta(N)$ over $\C$ and $\Q$. Also, since we know the $\Q$-gonality of curves $X_0(N)$ for $N\leq144$ and the $\Q$-gonality of curves $X_1(N)$ for $N\leq40$, we would like to obtain a similar result for intermediate curves $X_\Delta(N)$ for $N\leq40$. 

The main results of this paper are the following theorems.

\begin{thm}\label{Qgonalitythm}
    The $\Q$-gonalities of intermediate modular curves $X_\Delta(N)$ for all $N\leq40$ and $\{\pm1\}\subsetneq \Delta \subsetneq (\Z/N\Z)^\times$ are given in \Cref{tab:main1}.
\end{thm}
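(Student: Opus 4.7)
The proof is a finite case-by-case verification, one row of \Cref{tab:main1} at a time. The plan is first to enumerate the pairs $(N,\Delta)$ with $N\leq 40$ and $\{\pm 1\}\subsetneq \Delta\subsetneq (\Z/N\Z)^\times$, then for each such pair to compute the genus of $X_\Delta(N)$ from the index of $\Gamma_\Delta(N)$ in $\SL_2(\Z)$ together with the standard ramification data, and finally to pin down $\gon_\Q$ by squeezing an upper bound against a lower bound.

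For the \emph{upper bounds} I would collect cheap inputs first: genus $0$ curves with a rational cusp have gonality $1$, genus $1$ curves with a rational point (such as a cusp) have gonality $2$, all genus $2$ curves are automatically hyperelliptic, and every pair on the hyperelliptic list of Ishii--Momose/Jeon--Kim has gonality $2$; trigonal pairs are handled by \cite{Jeon2007} together with \Cref{CtrigonalQtetragonalthm}. For the remaining pairs, upper bounds come from the natural projections $X_\Delta(N)\to X_{\Delta'}(N)$, in particular to $X_0(N)$ whose $\Q$-gonality is known for $N\leq 144$ by \cite{NajmanOrlic22}, from the projection $X_1(N)\to X_\Delta(N)$ combined with the tables of \cite{derickxVH}, and from explicit degree-$d$ maps to $\PP^1$ produced by Atkin--Lehner quotients of $X_\Delta(N)$ whenever the quotient is rational or elliptic with a $\Q$-point.

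For the \emph{lower bounds}, the principal tool is reduction modulo a prime $p\nmid N$: any $\Q$-rational morphism $X_\Delta(N)\to\PP^1$ of degree $d$ descends to the smooth model over $\F_p$, so $\gon_\Q X_\Delta(N)\geq \gon_{\F_p}(X_\Delta(N))_{\F_p}$, and the right-hand side is in turn bounded below by the point-counting inequality $\#X_\Delta(N)(\F_{p^r})\leq (p^r+1)\,d$. This is supplemented by the Castelnuovo--Severi inequality whenever $X_\Delta(N)$ admits two independent low-degree maps, and, when the genus is not too large, by the Abramovich/Kim--Sarnak linear lower bound on the $\C$-gonality in terms of the index of $\Gamma_\Delta(N)$.

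The genuine obstacle will be the handful of pairs in the middle of the range where the cheap upper bound leaves a gap with the point-counting lower bound. In those cases one either produces the missing $\PP^1$-fibration explicitly, typically via a carefully chosen Atkin--Lehner quotient or a known elliptic factor of the Jacobian, or sharpens the lower bound through Castelnuovo--Severi applied to an explicitly identified pair of maps. A secondary, specifically arithmetic, difficulty is separating curves that are $\C$-tetragonal (or $\C$-trigonal) but lack a $\Q$-rational $g^1_d$ of the same degree; here one must descend the linear system, which is done by analysing the Galois action on the $\C$-rational pencils and invoking the classification of $\C$-tetragonal intermediate curves established elsewhere in this paper.
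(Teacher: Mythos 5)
Your overall plan (a finite row-by-row verification of \Cref{tab:main1}, upper bounds from explicit maps and quotient curves, lower bounds from reduction modulo $p$, plus the known hyperelliptic/trigonal classifications) is the same skeleton the paper uses, but your principal lower-bound tool is not strong enough for precisely the rows that carry the content of the theorem. For $N\leq 40$ the table contains exact $\Q$-gonalities $6,7,8$ and $9$ on curves of genus $8$--$17$ (for instance $\gon_\Q=9$ for $(37,\{\pm1,\pm6\})$, $8$ for $(35,\{\pm1,\pm6\})$, $(39,\{\pm1,\pm14\})$, $(40,\{\pm1,\pm9\})$, $7$ for $(40,\{\pm1,\pm11\})$, and $6$ for about a dozen further curves). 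The point-counting criterion $\#X(\F_q)>d(q+1)$ of \Cref{Fp2points} cannot certify lower bounds of this size in this range: a genus~$16$ curve over $\F_4$ would need more than $40$ rational points to rule out degree~$8$, and in fact the paper invokes point counts only for levels $\geq 71$. Castelnuovo--Severi applied to the available quotient maps gives bounds far below what is needed (for $(37,\{\pm1,\pm6\})$ the degree-$3$ map to a genus-$4$ curve only excludes degree $\leq 2$), and the Kim--Sarnak index bound is vacuous here since all indices for $N\leq 40$ are well below the threshold needed even for gonality $\geq 6$. The missing idea is the paper's direct computation of the $\F_p$-gonality: enumerate all effective $\F_p$-rational divisors of degree $\leq d-1$ (cut down by the normalization trick that a degree-$d$ function may be replaced by $1/(f-c)$ so that its polar divisor contains many $\F_p$-points) and check that every Riemann--Roch space has dimension $1$, as in Propositions \ref{Fp_gonality_deg4}, \ref{Fp_gonality_deg5} and \ref{Fp_gonality_large}. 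Without this, or an equivalent exhaustive argument, your lower bounds stall at $4$ or $5$ and the table cannot be established.

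A secondary gap is on the upper-bound side in the same middle range: Atkin--Lehner quotients and elliptic factors of the Jacobian are not how the degree $4$--$9$ maps are actually produced, and it is not clear they suffice. The paper obtains them from canonical models together with \texttt{Magma}'s genus-$5$ gonal-map machinery (\Cref{genus5gonalmap}), from explicit searches for low-degree rational divisors of Riemann--Roch dimension $\geq 2$, including a search for quadratic points in the case $(37,\{\pm1,\pm10,\pm11\})$ (\Cref{magmamapprop}), and from compositions $X_\Delta(N)\to Y\to \PP^1$ through quotient modular curves of known gonality (\Cref{quotientmapprop}); your proposal would need to commit to one of these (or show your quotients really do furnish the required pencils) before the squeeze closes. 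Finally, the $\Q$-versus-$\C$ separation you propose to handle by descending pencils via Galois cohomology is done in the paper much more simply, by $\F_p$-gonality lower bounds and, for genus $\geq 10$, the Tower Theorem corollary; this part of your plan is workable but heavier than necessary.
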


\begin{thm}\label{Qtetragonalthm}
    The intermediate modular curve $X_\Delta(N)$ is tetragonal over $\Q$ if and only if $N$ and $\Delta$ are listed in the following table. For expository reasons, we do not list all elements of larger groups $\Delta$. Instead, we give the generators and the number of elements of such groups $\Delta$.

        \begin{center}
\hspace*{-1cm}\begin{tabular}{|c|c||c|c|}
\hline
    $N$ & $\Delta$ & $N$ & $\Delta$\\
    \hline
    $25$ & $\{\pm1,\pm7\}$ &
    $30$ & $\{\pm1,\pm11\}$\\
    $32$ & $\{\pm1,\pm15\}$ &
    $33$ & $\{\pm1,\pm2,\pm4,\pm8,\pm16\}$\\
    $34$ & $\{\pm1,\pm9,\pm13,\pm15\}$ &
    $35$ & $\{\pm1,\pm6,\pm8,\pm13\}$\\
    $35$ & $\left<-1,4,6\right>$, $\#\Delta=12$ &
    $36$ & $\{\pm1,\pm17\}$\\
    $39$ & $\{\pm1,\pm5,\pm8,\pm14\}$ &
    $39$ & $\left<-1,4\right>$, $\#\Delta=12$\\
    $40$ & $\{\pm1,\pm3,\pm9,\pm13\}$ &
    $40$ & $\{\pm1,\pm7,\pm9,\pm17\}$\\
    $40$ & $\{\pm1,\pm9,\pm11,\pm19\}$ &
    $41$ & $\left<-1,2\right>$, $\#\Delta=20$\\
    $45$ & $\left<-1,4\right>$, $\#\Delta=12$ &
    $48$ & $\{\pm1,\pm5,\pm19,\pm23\}$\\
    $48$ & $\{\pm1,\pm7,\pm17,\pm23\}$ &
    $48$ & $\{\pm1,\pm11,\pm13,\pm23\}$\\
    $55$ & $\left<-1,4\right>$, $\#\Delta=20$ &
    $64$ & $\left<-1,9\right>$, $\#\Delta=16$\\
    $75$ & $\left<-1,4\right>$, $\#\Delta=20$ & &\\
     
    \hline
\end{tabular}
\end{center}
\end{thm}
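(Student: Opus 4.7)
The strategy combines the Abramovich--Kim--Sarnak lower bound on $\C$-gonality with the $N\leq 40$ data of Theorem~\ref{Qgonalitythm} and an explicit case analysis at higher level.

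First, I would invoke the Abramovich--Kim--Sarnak estimate $\gon_\C X(\Gamma) \geq c \cdot [\PSL_2(\Z):\overline{\Gamma}]$ for the known absolute constant $c$. Since $\gon_\Q \geq \gon_\C$, imposing $\gon_\Q X_\Delta(N) = 4$ bounds this index, and the formula for $[\PSL_2(\Z):\overline{\Gamma_\Delta(N)}]$ in terms of $N$ and $|\Delta|$ then cuts the search down to a finite, explicit list of pairs $(N,\Delta)$. Pairs with $N \leq 40$ are resolved by direct lookup in Theorem~\ref{Qgonalitythm}, so the remaining work is at higher level within this finite list.

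For each remaining pair appearing in the claimed table, I would exhibit a $\Q$-rational $g^1_4$. Standard sources include composing a projection $X_\Delta(N) \to X_{\Delta'}(N')$ with a lower-degree gonal map on the target; pulling back a degree-$2$ map from a bielliptic quotient to an elliptic curve $E$ with $E(\Q) \neq 0$; or writing down a degree-$4$ pencil on an explicit canonical or plane model. The pair $(25,\{\pm 1,\pm 7\})$ is distinguished: by Theorem~\ref{CtrigonalQtetragonalthm} it is $\C$-trigonal but not $\Q$-trigonal, so an explicit $\Q$-rational tetragonal pencil must be produced from the Jacobian decomposition or from a direct model rather than by descending a $\C$-trigonal map.

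For each remaining pair \emph{not} in the table, I would show $\gon_\Q X_\Delta(N) \neq 4$. The hyperelliptic pairs (Ishii--Momose) and $\Q$-trigonal pairs (Jeon--Kim together with Theorem~\ref{CtrigonalQtetragonalthm}) are already known and have $\gon_\Q \in \{2,3\}$; the remaining task is to prove $\gon_\Q \geq 5$. I would do this by reduction modulo a prime $p$ of good reduction using $\gon_\Q \geq \gon_{\F_p}$, either via Castelnuovo-style point-count estimates of the form $\#X_\Delta(N)(\F_p) \leq d(p+1)$ for $d$-gonal curves over $\F_p$, or by enumerating $g^1_4$-candidate divisor classes on the mod-$p$ reduction as in Derickx--van Hoeij and Najman--Orli\'c. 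The main obstacle will be exactly this last step for the borderline pairs where Kim--Sarnak leaves little slack: there the genus is only modestly above $3$, and one must either locate a prime of good reduction with a favorable point count, or perform a direct Brill--Noether analysis on $\Pic^4 X_\Delta(N)$; by contrast, producing the explicit tetragonal maps at the few high-level cases such as $N \in \{55,64,75\}$ is comparatively routine and typically leverages a tetragonal structure already present on a natural intermediate quotient.
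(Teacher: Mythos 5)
Your overall verification scheme matches the paper's: exhibit explicit $\Q$-rational degree~$4$ maps for the listed pairs (the paper does this in Propositions \ref{genus5gonalmap}, \ref{magmamapprop}, \ref{quotientmapprop}, exactly via gonal maps on canonical models, Riemann--Roch searches, and compositions through quotients), and rule out $\gon_\Q=4$ elsewhere using the known hyperelliptic/trigonal classifications together with mod-$p$ arguments (the paper's \Cref{Fp2points}, Propositions \ref{Fp2pointscomputation}--\ref{Fp_gonality_large}, plus the Castelnuovo--Severi cases of \Cref{CSprop}). Where you genuinely diverge is the finiteness reduction and the bookkeeping of the negative direction. You propose to sieve with the Kim--Sarnak index bound applied directly to $X_\Delta(N)$; the paper instead uses the tower $X_1(N)\to X_\Delta(N)\to X_0(N)$ and \Cref{poonen}(vii): it restricts to levels where $\gon_\C X_0(N)\le 4$ and $\gon_\Q X_1(N)\ge 4$ (both known), and, crucially, it reuses the same monotonicity to propagate $\gon_\Q\ge 5$ from a quotient $X_{\Delta_2}(N)$ up to every $X_{\Delta_1}(N)$ with $\Delta_1\subseteq\Delta_2$, so that only a handful of curves per level need an actual computation. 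Your plan omits this tool entirely, and that is where it would hurt: the Kim--Sarnak sieve (index $\le \lfloor 4\cdot 12000/119\rfloor$) leaves many high-genus intermediate curves that you would then have to treat one by one, and your fallback of enumerating candidate $g^1_4$'s on the mod-$p$ reduction scales like $O(p^4)$ Riemann--Roch computations per curve (the paper remarks single cases already take hours), while ``a direct Brill--Noether analysis on $\Pic^4$'' is not a method you make precise. So your route is workable in principle (point counts over $\F_{p^2}$ will dispatch most large-index curves), but the paper's use of \Cref{poonen}(vii) is the idea that makes the classification feasible, and you should add it. Two smaller points: for $(25,\{\pm1,\pm7\})$ no explicit pencil is needed --- it has genus $4$ and a rational cusp, so $\gon_\Q\le 4$ by \Cref{poonen}(iv), and $\gon_\Q\ge4$ since it is not $\Q$-trigonal by \Cref{CtrigonalQtetragonalthm}; and citing \Cref{Qgonalitythm} for $N\le 40$ is legitimate but somewhat circular in spirit, since in the paper that table is itself assembled from the same propositions that prove this theorem.
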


\begin{thm}\label{CtetragonalQpentagonalthm}
    The intermediate modular curve $X_\Delta(N)$ is tetragonal over $\C$ and has $\Q$-gonality at least $5$ if and only if 
    $$(N,\Delta)\in\{(31,\{\pm1,\pm5,\pm6\}),(31,\{\pm1,\pm2,\pm4,\pm8,\pm15\})\}.$$
    Moreover, the $\Q$-gonality of both these curves is equal to $5$.
\end{thm}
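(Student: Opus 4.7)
The statement concerns two specific intermediate modular curves, $X_{\Delta_1}(31)$ with $\Delta_1 = \{\pm1,\pm5,\pm6\}$ and $X_{\Delta_2}(31)$ with $\Delta_2 = \{\pm1,\pm2,\pm4,\pm8,\pm15\}$, and asserts that these are precisely the $\C$-tetragonal intermediate modular curves which fail to be $\Q$-tetragonal. The ``if and only if'' direction reduces to comparing the classification of $\C$-tetragonal $X_\Delta(N)$ (which the paper presumably establishes elsewhere, in the spirit of Jeon-Park for $X_0(N)$ and Jeon-Kim-Park for $X_1(N)$) with \Cref{Qtetragonalthm}: removing the $\Q$-tetragonal cases from the former list leaves exactly these two curves. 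The real content is then to verify, for each of the two curves, that $\gon_\C = 4$ and $\gon_\Q = 5$.

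My first step would be to compute the genera via the standard genus formula for $\Gamma_\Delta(N)$, and to record the natural projections $X_{\Delta_1}(31) \to X_0(31)$ of degree $5$ and $X_{\Delta_2}(31) \to X_0(31)$ of degree $3$, where $X_0(31)$ is hyperelliptic of genus $2$. The lower bound $\gon_\C \ge 4$ is then immediate, since neither curve is hyperelliptic (by Ishii-Momose) nor trigonal (by Jeon). For the upper bound $\gon_\C \le 4$, I would produce a degree-$4$ map over $\bar\Q$; the natural candidate is a bielliptic involution defined over a small number field (but presumably not over $\Q$) whose quotient is an elliptic curve $E$, composed with the $x$-coordinate $E \to \PP^1$. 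Such an involution should be sought among combinations of diamond operators $\langle d\rangle$ with $d \in (\Z/31\Z)^\times/\Delta_i$ and Atkin-Lehner-type involutions on the ambient $X_H$. For the upper bound $\gon_\Q \le 5$, I would compute an explicit model of each curve (via $q$-expansions of weight-$2$ modular forms at level $31$) and produce a $\Q$-rational function of degree $5$ on it directly.

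The main obstacle is the lower bound $\gon_\Q \ge 5$, i.e., ruling out a $\Q$-rational map of degree $4$. The first attack is Castelnuovo-Severi, applied to a hypothetical degree-$4$ map and the existing degree-$d_i$ projection to $X_0(31)$: if they do not share a common intermediate cover, then
\[
g(X_{\Delta_i}(31)) \le 2 d_i + (4-1)(d_i - 1) = 5 d_i - 3,
\]
giving bounds $22$ and $12$ for $i=1,2$ respectively. If the actual genera exceed these bounds, the argument concludes at once; otherwise, I would fall back on the Jacobian-decomposition technique of \cite{DerickxNajman2023}, computing the isogeny-simple factors of $J_\Delta(31)$ and the Mordell-Weil ranks of their rational points, and showing via Brill-Noether geometry that any $\Q$-rational $g^1_4$ would force the existence of a $\Q$-point on a component of $W^1_4 \setminus W^1_3$ which in fact consists only of a Galois-conjugate pair of pencils defined over a quadratic extension. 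This arithmetic-geometric analysis on an explicit model is where I expect the bulk of the technical work to lie.
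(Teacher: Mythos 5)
Your proposal does not actually establish the decisive step, the lower bound $\gon_\Q\geq 5$ for the two level-$31$ curves. Your first attack, Castelnuovo--Severi against the projections to $X_0(31)$ of degrees $5$ and $3$, fails exactly in the way you half-anticipate: both curves have genus $6$, far below your thresholds $5d_i-3=22$ and $12$, so the inequality gives no contradiction (nor could any argument of this shape, since degree-$5$ maps to $\PP^1$ do exist on these curves). Your fallback -- decomposing the Jacobian and showing that any rational $g^1_4$ forces a $\Q$-point on $W^1_4\setminus W^1_3$, which you assert consists of a Galois-conjugate pair of pencils over a quadratic field -- is only a plan whose key claim is unverified; for a tetragonal genus-$6$ curve the finite set of $g^1_4$'s and the Galois action on it would require genuine computation that you do not carry out. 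The paper's route is entirely different and more elementary: it reduces modulo a prime of good reduction ($p=7$ for $\Delta=\{\pm1,\pm5,\pm6\}$, $p=2$ for $\Delta=\{\pm1,\pm2,\pm4,\pm8,\pm15\}$) and verifies by a finite Riemann--Roch computation that there is no function of degree $\leq 4$ in $\F_p(X_\Delta(31))$ (\Cref{Fp_gonality_deg4}); since $\gon_{\F_p}\leq\gon_\Q$ at a prime of good reduction, this yields $\gon_\Q\geq 5$. You never consider positive-characteristic gonality, so the core of the theorem is left unproved.

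Two further points. For $\gon_\C\leq 4$ you propose hunting for a bielliptic involution over a number field; this is both unnecessary and unjustified, since a tetragonal genus-$6$ curve need not be bielliptic and you give no evidence these two are. The paper simply notes that both curves have genus $6$, so $\gon_\C\leq\lfloor(g+3)/2\rfloor=4$ by \Cref{poonen}(v), and combines this with non-hyperelliptic, non-trigonal (which you do have) to get $\gon_\C=4$ (\Cref{tetragonalgenus6}); your upper bound $\gon_\Q\leq 5$ via an explicit model and a degree-$5$ rational function does match the paper (\Cref{magmamapprop}). Finally, the ``only if'' direction cannot be outsourced to a pre-existing classification of $\C$-tetragonal curves $X_\Delta(N)$: no such classification exists prior to this paper, and it is precisely what is proved here via the Betti-number criterion (\Cref{bettiprop}), Castelnuovo--Severi (\Cref{CSprop}), the Kim--Sarnak index bound (\Cref{indexprop}), the $\F_p$-gonality bounds combined with the Tower Theorem (\Cref{towerthmcor}) for genus $\geq 10$, and \Cref{poonen}(vii) for the remaining quotients; as written, your treatment of that direction is circular.
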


\begin{thm}\label{Qpentagonalthm}
    The intermediate modular curve $X_\Delta(N)$ is pentagonal over $\Q$ and over $\C$ if and only if
    $$(N,\Delta)\in\{(44,\{\pm1,\pm5,\pm7,\pm9,\pm19\}),(125,\left<-1,4\right>)\}.$$
    For $N=125$, this group $\Delta$ has $50$ elements.
\end{thm}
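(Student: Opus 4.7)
The plan is to combine the classifications of $\C$-tetragonal intermediate modular curves established earlier in this paper with explicit constructions and gonality lower bounds. Together with the known classifications of hyperelliptic and trigonal $X_\Delta(N)$ recalled in the introduction, Theorems~\ref{Qtetragonalthm} and~\ref{CtetragonalQpentagonalthm} list every intermediate modular curve whose $\C$-gonality is at most $4$. Consequently, the $X_\Delta(N)$ that are pentagonal over both $\Q$ and $\C$ are precisely those $(N,\Delta)$ not appearing on any of these earlier lists for which $\gon_\Q X_\Delta(N) = 5$ (since in that situation $\gon_\C X_\Delta(N) \leq \gon_\Q X_\Delta(N) = 5$ and $\gon_\C X_\Delta(N) \geq 5$ forces equality). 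The argument therefore splits into (a) showing the two candidates in the statement admit a degree-$5$ $\Q$-rational map to $\PP^1$, and (b) proving that every other $X_\Delta(N)$ not already covered by the earlier classifications has $\Q$-gonality at least $6$.

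For part (a), I would exhibit explicit degree-$5$ maps. For $(N,\Delta)=(44,\{\pm1,\pm5,\pm7,\pm9,\pm19\})$, one computes a canonical model from $q$-expansions of weight-$2$ cusp forms for $\Gamma_\Delta(44)$ and then searches the Riemann--Roch space of a low-degree $\Q$-rational divisor (for example a rational combination of cusps of total degree $5$) for a non-constant function; this is a finite linear-algebra computation. For $(N,\Delta)=(125,\langle -1, 4\rangle)$ the index of $\Delta$ in $(\Z/125\Z)^\times$ is $2$, so $X_\Delta(125)$ is a degree-$2$ cover of $X_0(125)$; the genus is too large for a brute-force Riemann--Roch search, so I would instead construct a pentagonal map by pulling back or descending a suitable function along a modular quotient, or by exploiting an involution arising from the prime-power level $p^3 = 125$.

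For part (b), I would first apply the Abramovich--Kim--Sarnak lower bound $\gon_\C X(\Gamma) \geq \tfrac{7}{800}\,[\PSL_2(\Z):\bar\Gamma]$ to cut the problem down to finitely many pairs $(N,\Delta)$. The surviving cases are then handled by the standard toolkit for modular-curve gonality: the Castelnuovo--Severi inequality applied to the natural projections $X_\Delta(N) \to X_{\Delta'}(N')$ whose gonalities are already known from the literature on $X_0(N)$ and $X_1(N)$ or from \Cref{Qgonalitythm}; reduction modulo small primes $p$ of good reduction, using that a degree-$d$ map to $\PP^1$ over $\Q$ forces $\#X_\Delta(N)(\F_{p^n}) \leq d(p^n+1)$; and, when an explicit model is computationally within reach, a direct verification in the Jacobian that no $g^1_5$ exists.

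The main obstacle will be the borderline cases in (b) where the Abramovich--Kim--Sarnak bound only forces $\gon_\C \geq 5$; there one must genuinely distinguish $\Q$-gonality $5$ from $\Q$-gonality $\geq 6$, and the $\F_p$-point count together with Castelnuovo--Severi applied to several different modular quotients must be combined carefully, since neither method alone is strong enough. A secondary difficulty lies in (a) for $N=125$, where the high genus makes a brute-force construction impractical and one must instead rely on a structural map tailored to the arithmetic of the level $p^3 = 125$.
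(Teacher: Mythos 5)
Your proposal is correct and follows essentially the same route as the paper: reduce via the earlier classifications so that it remains to prove $\gon_\Q\geq 6$ for every unlisted curve (done in the paper by the Kim--Sarnak index bound, Castelnuovo--Severi, $\F_{p^2}$-point counts and, chiefly, enumeration of effective $\F_p$-rational divisors of degree $\leq 5$), and to exhibit degree-$5$ rational maps for the two listed curves (a Riemann--Roch search on a canonical model handles $N=44$). The one step you left open, $N=125$, is settled exactly by your first suggested option: the paper pulls back a coordinate along the degree-$5$ modular projection from $X_{\left<-1,4\right>}(125)$ to a genus-$0$ level-$25$ curve with a rational point (\Cref{quotientmapprop}), and obtains $\gon_\C\geq 5$ there from $\gon_\Q\geq5$ (\Cref{Fp_gonality_deg4}) combined with \Cref{towerthmcor}, since the genus is $16\geq10$.
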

This paper is organized as follows. 

In \Cref{sectionmain} we present the results needed to prove the above theorems. More precisely, in \Cref{Fpsection} we give lower bounds on the $\Q$-gonality of curves $X_\Delta(N)$ via $\F_p$-gonality, in \Cref{CSsection} we give lower bounds on the $\C$-gonality obtained using the Castelnuovo-Severi inequality (\Cref{tm:CS}), in \Cref{rationalmapsection} we give rational morphisms from $X_\Delta(N)$ to $\mathbb{P}^1$, and in \Cref{bettisection} we determine the $\C$-tetragonal curves $X_\Delta(N)$. After that, in \Cref{thmproofssection} we prove the main theorems of this paper.

For the reader's convenience, in \Cref{tablesection} at the end of the paper we put Tables \ref{tab:main1} and \ref{tab:main2}. In these tables we list the curves $X_\Delta(N)$ for all levels $N$ studied in the paper (the list of these levels $N$ is given and explained at the beginning of \Cref{sectionmain}). For these curves $X_\Delta(N)$ we also give their $\C$ and $\Q$-gonality with the links to all results used to determine the gonality of that curve.

A lot of the results in this paper rely on \texttt{Magma} \cite{magma} and Sage computations. It is well known that, for a curve $C$ of genus $g$ that is neither trigonal nor a smooth plane quintic, its canonical model consists of $\displaystyle\frac{(g-2)(g-3)}{2}$ quadrics. Since we are only interested in curves of genus $g\geq5$ (none of them are trigonal by \cite{Jeon2007}) and there are no intermediate modular curves that are smooth plane quintics by \cite[Theorem 1.1]{Anni2023}, the $\displaystyle\frac{(g-2)(g-3)}{2}$ quadrics will give canonical models of these curves $X_\Delta(N)$. We used a function vanishing\_quadratic\_forms() from Maarten Derickx's Sage package MD Sage to find these quadrics.

It should also be mentioned that David Zwyina's \texttt{Magma} function FindCanonicalModel() on
\begin{center}
    \url{https://github.com/davidzywina/ActionsOnCuspForms},
\end{center}
used in \cite{zywina20}, also gives canonical models of modular curves $X_\Gamma$ for groups $\Gamma\leq\GL_2(\Z/N\Z)$. However, this function is much slower than the MD Sage function vanishing\_quadratic\_forms() used here.

The codes that verify all computations in this paper can be found on
\begin{center}
    \url{https://github.com/orlic1/gonality_X_Delta}.
\end{center}
All computations were performed on the Euler server at the Department of Mathematics, University of Zagreb with a Intel Xeon W-2133 CPU running at 3.60GHz and with
64 GB of RAM.

\section{Acknowledgements}

Many thanks to Maarten Derickx for his assistance with the MD Sage package for SageMath which was essential to get the models of intermediate curves. Many of his other comments were useful to me as well. Additionally, code and data associated to the paper \cite{Rouse_Sutherland_Zureick-Brown_2022} by Jeremy Rouse, Andrew V. Sutherland, and David Zureick-Brown was used in \Cref{Fp2pointscomputation}. Their code can be found on
\begin{center}
    \url{https://github.com/AndrewVSutherland/ell-adic-galois-images/tree/209c2f888669785151174f472ea2c9eafb6daaa9}.
\end{center}
I am also grateful to Filip Najman for his helpful comments and suggestions.

\section{Results}\label{sectionmain}

In this section we list the known results that will be used to determine the $\C$ and $\Q$-gonality of intermediate modular curves. We first mention two obvious lower bounds for a curve $C$ defined over $\Q$:
\begin{align*}
    \textup{gon}_\C(C)&\leq\textup{gon}_\Q(C),\\
    \textup{gon}_{\F_p}(C)&\leq\textup{gon}_\Q(C).
\end{align*}
Here $p$ is a prime of good reduction for $C$. 

A very important result we will use throughout the paper is Poonen's \cite[Proposition A.1.]{Poonen2007}, stated below.

\begin{prop}[Poonen]\label{poonen}
    Let $X$ be a curve of genus $g$ over a field $k$.
    \begin{enumerate}[(i)]
        \item If $L$ is a field extension of $k$, then $\textup{gon}_L(X)\leq \textup{gon}_k(X)$.
        \item If $k$ is algebraically closed and $L$ is a field extension of $k$, then $\textup{gon}_L(X)=\textup{gon}_k(X)$.
        \item If $g\geq2$, then $\textup{gon}_k(X)\leq 2g-2$.
        \item If $g\geq2$ and $X(k)\neq\emptyset$, then $\textup{gon}_k(X)\leq g$.
        \item If $k$ is algebraically closed, then $\textup{gon}_k(X)\leq\frac{g+3}{2}$.
        \item If $\pi:X\to Y$ is a dominant $k$-rational map, then $\textup{gon}_k(X)\leq \deg \pi\cdot\textup{gon}_k(Y)$.
        \item If $\pi:X\to Y$ is a dominant $k$-rational map, then $\textup{gon}_k(X)\geq\textup{gon}_k(Y)$.
    \end{enumerate}
\end{prop}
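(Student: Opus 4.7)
The plan is to address the seven claims in turn; several are formal, and the main structural input is needed for (iv), (v), and especially (vii).

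For the formal parts: (i) follows because base change of a degree-$d$ morphism $X \to \PP^1_k$ along $k \hookrightarrow L$ yields a degree-$d$ morphism $X_L \to \PP^1_L$; (vi) follows because a degree-$\gon_k(Y)$ morphism $g \colon Y \to \PP^1$ pulls back to $g \circ \pi$ of degree $\deg(\pi)\cdot\gon_k(Y)$; and (iii) is a one-line Riemann-Roch argument --- $h^0(K_X) = g \geq 2$ forces two linearly independent global sections, whose ratio is a non-constant $k$-rational function whose polar divisor has degree at most $\deg K_X = 2g-2$. Part (ii) is a spreading-out/specialization argument: a degree-$d$ morphism $X_L \to \PP^1_L$ is cut out by finitely many equations with coefficients in $L$, hence descends to a flat family over some finitely generated $k$-subalgebra $A \subset L$ with fibres of degree $d$; since $k$ is algebraically closed, $\Spec A$ has a $k$-point, and specializing there yields the sought morphism over $k$.

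For (iv) and (v) I would produce appropriate divisors via Riemann-Roch and Brill-Noether. For (iv), fix $P \in X(k)$ and consider the $k$-rational divisor $D := K_X - (g-2)P$ of degree $g$. Riemann-Roch gives $h^0(D) - h^0((g-2)P) = g - g + 1 = 1$, and $h^0((g-2)P) \geq 1$ since $(g-2)P$ is effective when $g \geq 2$, so $h^0(D) \geq 2$; any two $k$-linearly independent sections then give a $k$-rational morphism to $\PP^1$ of degree at most $g$. Part (v) is the classical Brill-Noether existence theorem: over an algebraically closed field, $W^1_d(X) \neq \emptyset$ whenever the Brill-Noether number $\rho(g,1,d) = g - 2(g-d+1) \geq 0$, i.e., $d \geq \lceil(g+3)/2\rceil$, and any point of $W^1_d$ furnishes a $g^1_d$.

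The main obstacle is (vii), for which I would run a function-field argument. Let $f \colon X \to \PP^1_k$ realize $\gon_k(X) = d$ and set $L := k(Y) \cap k(f)$ as subfields of $k(X)$. Assuming $X$ (and hence $Y$) is geometrically integral, $k$ is algebraically closed in $k(Y)$, so $k(Y)$ and $k(f) = k(t)$ would be linearly disjoint over $k$ if $L = k$, forcing $[k(Y)\cdot k(f):k(f)] = [k(Y):k] = \infty$, contradicting $[k(X):k(f)] = d < \infty$. Hence $L$ has transcendence degree $1$ over $k$, and since $L \subseteq k(t)$, Lüroth's theorem gives $L = k(u)$ for some $u$. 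A compositum-degree inequality $[k(Y):L] \leq [k(Y)\cdot k(f):k(f)] \leq [k(X):k(f)] = d$ --- whose first step needs care about linear disjointness of $k(Y)$ and $k(f)$ over $L$, but holds in this setting --- then shows $u$ defines a $k$-rational morphism $Y \to \PP^1_k$ of degree at most $d$, proving $\gon_k(Y) \leq \gon_k(X)$.
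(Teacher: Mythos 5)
The paper itself gives no proof of this proposition — it is quoted verbatim from Poonen's Proposition A.1 — so what matters is whether your argument stands on its own. Parts (i), (iii), (iv) and (vi) are fine, and (ii) is acceptable as a spreading-out sketch (you should add that the $k$-point must be chosen in a dense open of $\Spec A$ over which the family is still a finite flat degree-$d$ cover, but that is routine). In (v) there is a small rounding slip: $\rho(g,1,d)=2d-g-2\geq 0$ iff $d\geq\lceil (g+2)/2\rceil=\lfloor (g+3)/2\rfloor$, not $d\geq\lceil (g+3)/2\rceil$; with your threshold you would only get $\gon_k(X)\leq (g+4)/2$ for even $g$, which is weaker than the claim, though the fix is immediate from your own computation of $\rho$.

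The genuine gap is in (vii), the only part with real content. Your deduction that $L:=k(Y)\cap k(f)\neq k$ rests on the implication ``$L=k$ implies $k(Y)$ and $k(f)$ are linearly disjoint over $k$,'' and this is false: trivial intersection never implies linear disjointness, and here linear disjointness is outright impossible, since it would make a nonconstant $y\in k(Y)$ and $f$ algebraically independent over $k$, forcing $\operatorname{trdeg}_k k(X)\geq 2$. So no contradiction is reached, and indeed $L=k$ really occurs in the setting of (vii): inside $\Q(s)$ take $k(Y)=\Q(s^2)$ (so $\pi(s)=s^2$) and $f=(s+1)^2$; the two involutions $s\mapsto -s$ and $s\mapsto -s-2$ generate an infinite group (their composite is $s\mapsto s+2$), so the intersection of their fixed fields is $\Q$. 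Your second step, $[k(Y):L]\leq [k(Y)\cdot k(f):k(f)]$, is likewise unjustified: the inequality $[K:K\cap F]\leq [KF:F]$ fails in general (e.g.\ $K=\Q(\sqrt[3]{2})$, $F=\Q(\zeta_3\sqrt[3]{2})$ give $3\not\leq 2$), and you give no reason it holds here. So the intersection-field strategy cannot prove (vii). A correct route (and the one behind the cited Prop.\ A.1) is to push the pencil forward along $\pi$: with $m=\deg\pi$ and $f$ of degree $d=\gon_k(X)$, the assignment $y\mapsto f_*(\pi^*y)$ defines a nonconstant $k$-morphism $Y\to\Sym^m\PP^1\cong\PP^m$ whose pullback of a hyperplane is $\pi_*f^*(\mathrm{pt})$, of degree $d$; two independent $k$-rational sections of this pullback give a $k$-morphism $Y\to\PP^1$ of degree at most $d$ (equivalently, one can argue with the norm $N_{k(X)/k(Y)}(f-c)$ for a suitable constant $c$). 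Hence $\gon_k(Y)\leq\gon_k(X)$.
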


Since all modular curves $X_\Delta(N)$ have at least one rational cusp, this result implies that their $\Q$-gonality is bounded from above by their genus. Moreover, if we have 
$$\{\pm1\}\subseteq \Delta_1\subseteq \Delta_2 \subseteq (\Z/N\Z)^\times,$$
then, due to the natural projections $$X_1(N)\to X_{\Delta_1}(N)\to X_{\Delta_2}(N)\to X_0(N)$$ and \Cref{poonen}(vii), we conclude that
\begin{align}\label{dominantmapinequality}
    \textup{gon}_\Q(X_1(N))\geq\textup{gon}_\Q(X_{\Delta_1}(N))&\geq\textup{gon}_\Q(X_{\Delta_2}(N))\geq\textup{gon}_\Q(X_0(N)),\\
    \textup{gon}_\C(X_1(N))\geq\textup{gon}_\C(X_{\Delta_1}(N))&\geq\textup{gon}_\C(X_{\Delta_2}(N))\geq\textup{gon}_\C(X_0(N)).
\end{align}

Therefore, when searching for tetragonal curves $X_\Delta(N)$, we may restrict ourselves to the levels $N$ for which the curve $X_0(N)$ has $\C$-gonality at most $4$. Similarly, when searching for $\Q$-pentagonal curves $X_\Delta(N)$, we may restrict ourselves to the levels $N$ for which the curve $X_0(N)$ has $\Q$-gonality at most $5$. These levels $N$ are listed in the following two theorems.

\begin{thm}[\cite{Ogg74, HasegawaShimura_trig, JeonPark05}]
    The curve $X_0(N)$ has $\C$-gonality at most $4$ if and only if
    \begin{align*}
        N\in\{&1-75,77-81,83,85,87-89,91,92,94-96,98,100,101,\\
        &103,104,107,109,111,119,121,125,131,142,143,167,191\}.
    \end{align*}
\end{thm}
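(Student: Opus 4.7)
The plan is to assemble this classification by stratifying on the exact value of the $\C$-gonality, which for the relevant $N$ must be one of $1,2,3,4$. First, I would list the $N$ for which $X_0(N)$ has genus $0$ or $1$, as these automatically satisfy $\textup{gon}_\C(X_0(N))\leq 2$; this is a short and classical list. For higher genus I would invoke the three cited theorems in turn: Ogg \cite{Ogg74} classifies the hyperelliptic $X_0(N)$, Hasegawa--Shimura \cite{HasegawaShimura_trig} classifies the $X_0(N)$ trigonal over $\C$, and Jeon--Park \cite{JeonPark05} classifies the $X_0(N)$ tetragonal over $\C$. Taking the union of these four lists should reproduce precisely the set stated.

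For the converse direction, I need to verify that no $N$ outside this union can have $\textup{gon}_\C(X_0(N))\leq 4$. Within each genus range the three classifications above are themselves exhaustive, so the remaining worry is handling arbitrarily large $N$. For this the appropriate tool is the Zograf/Abramovich/Kim--Sarnak lower bound
$$\textup{gon}_\C(X_0(N)) \;\geq\; \frac{7}{800}\,[\SL_2(\Z):\Gamma_0(N)] \;=\; \frac{7}{800}\,N\prod_{p\mid N}\!\left(1+\tfrac{1}{p}\right),$$
which forces $\textup{gon}_\C(X_0(N))>4$ as soon as $N$ passes an explicit (modest) threshold. Thus the ``only if'' direction reduces to a finite check that is already covered by the three cited classifications.

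The main obstacle is not this bookkeeping but the proofs of the three cited results themselves, and especially Jeon--Park: their tetragonal classification has to rule out every $N$ not in the stated list, and does so via repeated applications of the Castelnuovo--Severi inequality to the various Atkin--Lehner quotients $X_0(N)/\langle w_d\rangle$, combined with a case analysis on the genera of these quotients. That analysis is the only genuinely nontrivial ingredient; once it is granted, producing the stated list is a matter of taking the union of the published classifications and cross-checking against the genus-$0$ and genus-$1$ values of $N$.
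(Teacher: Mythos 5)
Your proposal is correct and matches the paper's treatment: the paper gives no independent proof of this statement, but simply cites Ogg, Hasegawa--Shimura, and Jeon--Park, whose classifications (together with the low-genus cases and a gonality lower bound of Abramovich/Kim--Sarnak type to bound $N$, which is already built into those works) assemble into exactly the stated list. The only cosmetic difference is that you quote Abramovich's constant $\tfrac{7}{800}$ rather than the sharper Kim--Sarnak constant the paper uses elsewhere, which does not affect the argument.
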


\begin{thm}[\cite{NajmanOrlic22}]
    The curve $X_0(N)$ has $\Q$-gonality equal to $5$ if and only if $N=109$.
\end{thm}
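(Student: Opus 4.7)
The strategy is to combine Poonen's inequality $\textup{gon}_\C(X_0(N)) \leq \textup{gon}_\Q(X_0(N))$ with finiteness bounds for low-gonality modular curves to restrict to finitely many candidate levels, then use the existing hyperelliptic, trigonal, and tetragonal classifications to peel off the small-gonality cases, and finally handle a short list of residual $N$ by direct computation.

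For the finite reduction, since being $\Q$-pentagonal implies $\C$-gonality at most $5$, and the Abramovich--Zograf linear-in-index bound forces $\textup{gon}_\C(X_0(N))$ to grow with $N$, only finitely many $N$ are candidates. The theorem stated just above already lists all $N$ with $\textup{gon}_\C(X_0(N)) \leq 4$; one needs to append the finitely many further $N$ with $\textup{gon}_\C(X_0(N)) = 5$, which can be enumerated using the same bound together with explicit genus computations. From this augmented list, one removes the $N$ with $\textup{gon}_\Q(X_0(N)) \leq 4$, which are completely known: hyperelliptic by Ogg \cite{Ogg74}, trigonal by Hasegawa and Shimura \cite{HasegawaShimura_trig}, and $\Q$-tetragonal by Najman and Orli\'c \cite{NajmanOrlic22}. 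What survives is a short residual set of levels $N$ with $\textup{gon}_\Q(X_0(N)) \geq 5$ and $\textup{gon}_\C(X_0(N)) \leq 5$, and for each one must decide whether $\textup{gon}_\Q(X_0(N)) = 5$ or is at least $6$.

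For a given residual $N$, I would establish both bounds directly. For the upper bound of $5$, I would search for a $\Q$-rational effective divisor $D$ of degree $5$ on the canonical model of $X_0(N)$ with $h^0(D) \geq 2$, built from rational cusps, CM divisors, or pullbacks through a dominant map $X_0(N) \to X_0(N/d)$; on $X_0(109)$ (genus $8$, $\C$-tetragonal) this construction should produce an explicit $\Q$-rational degree $5$ pencil. The lower bound $\textup{gon}_\Q(X_0(N)) \geq 5$ follows automatically from the $\Q$-tetragonal classification for any $N$ surviving our filtration. To rule out $\textup{gon}_\Q(X_0(N)) = 5$ when needed, I would combine the reduction inequality $\textup{gon}_{\F_p}(X_0(N)) \leq \textup{gon}_\Q(X_0(N))$ at small primes $p$ of good reduction, computed in \texttt{Magma}, with the Castelnuovo--Severi inequality applied to Atkin--Lehner quotients and to degeneracy maps $X_0(N) \to X_0(M)$ for proper $M \mid N$.

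The delicate step is verifying, for each residual level $N \neq 109$, that no $\Q$-rational degree $5$ pencil exists; typically $X_0(N)$ admits a $\C$-rational pencil of degree $4$ or $5$ that is not Galois-stable over $\Q$, and one must prevent a descended $g^1_5$ via the combined $\F_p$-gonality and Castelnuovo--Severi analysis. Once $N = 109$ is isolated as the unique surviving level, exhibiting the explicit degree $5$ morphism on $X_0(109)$ completes the proof.
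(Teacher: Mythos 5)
This statement is not proved in the paper you were given at all: it is imported verbatim from \cite{NajmanOrlic22} and used as a black box to cut down the list of levels $N$, so there is no internal proof to compare your attempt against. That said, your outline is essentially a reconstruction of the strategy of the cited reference itself: reduce to finitely many levels via the linear-in-index gonality bound, discard the levels of $\Q$-gonality $\leq 4$ using the hyperelliptic/trigonal/tetragonal classifications, certify $\textup{gon}_\Q \geq 6$ at the surviving levels by $\F_p$-gonality computations (Riemann--Roch spaces of low-degree rational divisors over small primes) and Castelnuovo--Severi, and exhibit an explicit rational degree $5$ map on $X_0(109)$. Two caveats. First, you do not need (and should not try) to enumerate the levels with $\C$-gonality exactly $5$: that is a harder problem than the one at hand, and the Kim--Sarnak bound alone, $[\SL_2(\Z):\pm\Gamma_0(N)] \leq \lfloor 5\cdot\tfrac{12000}{119}\rfloor = 504$, already yields the finite candidate list of levels on which $\Q$-gonality $5$ is possible; the tower theorem then lets $\Q$-gonality lower bounds stand in for $\C$-gonality information in the high-genus cases. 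Second, everything that makes the theorem true is in the computations you defer with phrases like ``should produce'' and ``I would search'': the existence of a $\Q$-rational $g^1_5$ on the genus-$8$ curve $X_0(109)$, and the degree-$\geq 6$ certificates at each residual level, are exactly the content of the result, so as written your text is a correct plan of attack rather than a proof.
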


We can also eliminate those levels $N$ for which the $\Q$-gonality of the curve $X_1(N)$ is at most $3$, namely \cite[Table 1]{derickxVH}
$$N\in\{1-16,18,20\}.$$

Therefore, there are only finitely many intermediate modular curves $X_\Delta(N)$ we need to deal with. Moreover, from the group structure of the group $(\Z/N\Z)^\times$, we can easily see that for
$$N\in\{22,23,46,47,59,83,94,107,167\},$$
there are actually no intermediate modular curves $X_\Delta(N)$ because in these cases $(\Z/N\Z)^\times\cong \Z/2p\Z$ for some prime $p$.

\subsection{$\F_p$-gonality}\label{Fpsection}

In this section we use the results on the $\F_p$-gonality to get a lower bound on the $\Q$-gonality of the modular curves $X_\Delta(N)$.

\begin{lem}\cite[Lemma 3.5]{NajmanOrlic22}\label{Fp2points}
    Let $C$ be a curve, $p$ a prime of good reduction for $C$, and $q$ a power of $p$. Suppose $\#C(\F_q)>d(q+1)$. Then $\textup{gon}_\Q(C)>d$.
\end{lem}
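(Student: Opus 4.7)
The plan is a short argument by contradiction using reduction mod $p$. Suppose, contrary to the conclusion, that $\textup{gon}_\Q(C)\leq d$. Then there exists a non-constant $\Q$-rational morphism $f\colon C\to\mathbb{P}^1$ of degree at most $d$. I would then pass to the good reduction: since $p$ is a prime of good reduction for $C$, the curve $C$ extends to a smooth proper model $\mathcal{C}$ over $\Z_{(p)}$ (or over the completion, whichever is more convenient), and its special fiber $\tilde{C}=\mathcal{C}_{\F_p}$ is a smooth projective curve over $\F_p$. The morphism $f$ likewise extends to a morphism $\tilde{f}\colon\tilde{C}\to\mathbb{P}^1_{\F_p}$ of the same degree $\leq d$ (this is the standard fact that degree is preserved under specialization on a proper flat family; one can also see it by clearing denominators in the coordinates of $f$).

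Next I would base change along $\F_p\hookrightarrow\F_q$ to obtain $\tilde{f}_{\F_q}\colon\tilde{C}_{\F_q}\to\mathbb{P}^1_{\F_q}$, still of degree at most $d$. Now every $\Q$-point of $C$ reduces to an $\F_p$-point of $\tilde{C}$, and more generally $\#C(\F_q)\leq\#\tilde{C}(\F_q)$ if we interpret the left side through the smooth model; in the usual application $C$ is given over $\Q$ and $\#C(\F_q)$ is just shorthand for $\#\tilde{C}(\F_q)$. Each $\F_q$-point of $\tilde{C}$ is sent by $\tilde{f}_{\F_q}$ to an $\F_q$-point of $\mathbb{P}^1_{\F_q}$, and each fiber contains at most $\deg\tilde{f}_{\F_q}\leq d$ geometric points, hence at most $d$ $\F_q$-rational points. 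Combining these two observations gives
\[
\#C(\F_q)\;\leq\;d\cdot\#\mathbb{P}^1(\F_q)\;=\;d(q+1),
\]
which directly contradicts the hypothesis $\#C(\F_q)>d(q+1)$. Therefore $\textup{gon}_\Q(C)>d$.

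The argument is essentially mechanical once the reduction step is in hand; the main (and only) point requiring care is the claim that a $\Q$-rational morphism to $\mathbb{P}^1$ of degree $d$ specializes, at a prime of good reduction, to an $\F_p$-rational morphism of the same degree. I would invoke this as a standard property of smooth proper models (or alternatively note that writing $f=[F_0\colon F_1]$ with $F_0,F_1\in\Z_{(p)}[C]$ coprime, the reduction $[\bar F_0\colon \bar F_1]$ is well-defined and of degree equal to the intersection number with a generic fiber, which is preserved by flatness). Once this is accepted, the count $\#\tilde{C}(\F_q)\leq d\cdot(q+1)$ is immediate and the lemma follows.
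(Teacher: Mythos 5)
Your overall strategy --- reduce modulo $p$ and count the $\F_q$-points of $C$ lying in the fibers over the $q+1$ points of $\mathbb{P}^1(\F_q)$ --- is exactly the argument behind the cited lemma. The gap is in the one step you yourself flag as the only delicate point: it is not true that a $\Q$-rational map $f\colon C\to\mathbb{P}^1$ of degree $\leq d$ ``extends to a morphism on the special fiber of the same degree,'' and ``clearing denominators'' does not give this either. The naive reduction of the map can drop degree and can even become constant. Concretely, take an elliptic curve $E/\Q$ with good reduction at $p$ and the degree-$2$ function $p\cdot x$: at the generic point $\eta$ of the special fiber of the smooth model one has $v_\eta(p\cdot x)=1>0$, so the rational map $[p\,x:1]$ sends $\eta$ (hence the whole special fiber, away from finitely many indeterminacy points) to $0$, i.e.\ the reduced map is constant. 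In that situation the fiber count collapses and your inequality $\#C(\F_q)\leq d(q+1)$ no longer follows. Degree is preserved in a flat family for the pullback line bundle, but the morphism itself need not specialize to a finite map of the same degree; indeterminacy on the special fiber (equivalently, a vertical component in the divisor of $f$ on the model) is precisely what goes wrong.

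The standard repair --- in effect what the cited source uses, and what the present paper records as the inequality $\textup{gon}_{\F_p}(C)\leq\textup{gon}_\Q(C)$ at the start of \Cref{sectionmain} --- is to specialize the linear system rather than your chosen coordinates for the map: let $D=f^*(\infty)$, a $\Q$-rational effective divisor of degree $\leq d$ with $\ell(D)\geq 2$; its flat closure in the smooth proper model restricts to an effective divisor $\widetilde{D}$ of the same degree on the special fiber, and upper semicontinuity of $h^0$ gives $\ell(\widetilde{D})\geq 2$, hence a non-constant $\F_q$-rational map of degree $\leq d$ on the reduction (alternatively, simply cite the gonality-under-reduction inequality). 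With such a map in hand, your counting argument is correct and finishes the proof, so the route is the same as the paper's; only the specialization step needs semicontinuity rather than the degree-preservation claim you gave.
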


Let us take a prime $p\nmid N$. Then $p$ is a prime of good reduction for all curves $X_\Delta(N)$ and we can use \Cref{Fp2points} to obtain a lower bound on the $\Q$-gonality.

\begin{prop}\label{Fp2pointscomputation}
    The modular curve $X_\Delta(N)$ has $\Q$-gonality at least $6$ for the following values of $N$ and $\Delta$:

    \begin{center}
\begin{tabular}{|c|c|c|c|}
\hline
$N$ & $\Delta$ & $p$ & $\#X_\Delta(N)(\F_{p^2})$\\
    \hline

    $71$ & $\left<-1,5\right>$ & $5$ & $182$\\
    $78$ & $\left<-1,5,31\right>$ & $5$ & $192$\\
    $80$ & $\left<-1,3,49\right>$ & $3$ & $68$\\
    $88$ & $\left<-1,21,25\right>$ & $3$ & $68$\\
    $91$ & $\left<-1,2\right>$ & $2$ & $38$\\
    $96$ & $\left<-1,5\right>$ & $5$ & $160$\\
    $104$ & $\left<-1,3,25\right>$ & $3$ & $72$\\
    $104$ & $\left<-1,5,27\right>$ & $5$ & $192$\\
    $143$ & $\left<-1,8\right>$ & $5$ & $180$\\

    \hline
\end{tabular}
\end{center}
\end{prop}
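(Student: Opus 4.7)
The plan is to apply \Cref{Fp2points} with $d=5$ to each row of the table, so the target inequality to verify is
\[
\#X_\Delta(N)(\F_{p^2}) \;>\; 5(p^2+1).
\]
Once this holds for a prime $p$ of good reduction, \Cref{Fp2points} yields $\textup{gon}_\Q(X_\Delta(N))>5$, i.e.\ the $\Q$-gonality is at least $6$, which is exactly the claim. The first routine step is therefore to check for each row that $p\nmid N$, so that $p$ is a prime of good reduction and \Cref{Fp2points} applies; a quick inspection of the table shows this is the case (for example $p=5$ with $N=71,78,96,104,143$, $p=3$ with $N=80,88,104$, and $p=2$ with $N=91$).

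Next, with each $(N,\Delta,p)$ in hand, the arithmetic inequality is a one-line verification: for $p=2$ we need $>25$ (and we have $38$), for $p=3$ we need $>50$ (and we have $68,68,72$), and for $p=5$ we need $>130$ (and we have $182,192,160,192,180$). So once the point counts in the fourth column are established, the proposition follows immediately from \Cref{Fp2points}.

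The real content is therefore computing $\#X_\Delta(N)(\F_{p^2})$. My plan is to obtain, for each listed $(N,\Delta)$, an explicit model of $X_\Delta(N)$ (most naturally the canonical model built from the quadrics produced by the MD Sage function \texttt{vanishing\_quadratic\_forms()} described in the introduction), reduce it modulo the chosen $p$, and count its $\F_{p^2}$-points directly in \texttt{Magma}. Alternatively, one may realize $X_\Delta(N)$ as $X_H$ for the corresponding $H\leq\GL_2(\Z/N\Z)$ and count points on the reduction using the Rouse--Sutherland--Zureick-Brown code acknowledged above; this is the route naturally packaged for modular curves in group-theoretic form.

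The main obstacle is therefore purely computational: producing honest models of these curves over $\F_p$ (some of which have moderately large genus) and performing the point counts within memory limits. Provided the canonical (or a suitable plane) model is available, the point counts themselves are fast. One subtle point to keep in mind is to count only the points on the geometrically irreducible component defined over $\Q$ --- which is automatic here because the subgroup $H$ attached to an intermediate modular curve has full determinant, so the reductions produce a geometrically integral curve over $\F_p$ and the raw $\F_{p^2}$-point count is exactly $\#X_\Delta(N)(\F_{p^2})$. With that caveat handled, the table entries follow from the \texttt{Magma} runs and the proposition follows by the numerical comparison above.
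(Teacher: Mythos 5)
Your proposal is correct and follows essentially the same route as the paper: verify $p\nmid N$, compute $\#X_\Delta(N)(\F_{p^2})$ in \texttt{Magma}, check $\#X_\Delta(N)(\F_{p^2})>5(p^2+1)$, and apply \Cref{Fp2points} with $q=p^2$. The only cosmetic difference is that the paper obtains the point counts group-theoretically via Sutherland's \texttt{GL2PointCount} function rather than from reduced canonical models, a route you already mention as an alternative.
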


\begin{proof}
    We use \texttt{Magma} to compute the number of $\F_{p^2}$ rational points on these curves $X_\Delta(N)$. It is easy to check that $\#X_\Delta(N)(\F_{p^2})>5(p^2+1)$ and we can use \Cref{Fp2points} with $q=p^2$ to finish the proof.
\end{proof}

Here we used Andrew Sutherland's \texttt{Magma} function GL2PointCount($\Gamma$, $q$) which, for $\Gamma\leq \GL_2(\Z/N\Z)$, returns the number of $\F_q$-rational points of $X_\Gamma$. For a given group $\Delta\leq(\Z/N\Z)^\times$ the corresponding group $\Gamma$ has the following generators:

\begin{align*}
&\begin{bmatrix}
a & 0\\
0 & 1
\end{bmatrix}
: a\textup{ is a generator of } \Delta,\\
&\begin{bmatrix}
1 & 0\\
0 & d
\end{bmatrix}
: d\textup{ is a generator of } (\Z/N\Z)^\times,\\
&\begin{bmatrix}
1 & 1\\
0 & 1
\end{bmatrix}.\\
\end{align*}

We can also directly obtain the lower bound on the $\F_p$-gonality of $X_\Delta(N)$ by checking that the dimensions of Riemann-Roch spaces of all degree $\leq d$ effective $\F_p$-rational divisors are equal to $1$. This is a finite task since the number of such divisors is finite. We can also use certain tricks (like the ones used in the following three propositions) to reduce the number of divisors that need to be checked.

\begin{prop}\label{Fp_gonality_deg4}
The $\F_p$-gonality of the curve $X_\Delta(N)$ at least $5$ for the following values of $N$ and $\Delta$:

\begin{center}
\begin{tabular}{|c|c|c|}
\hline
$N$ & $\Delta$ & $p$\\
    \hline
    
    $31$ & $\{\pm1,\pm5,\pm6\}$ & $7$\\
    $31$ & $\{\pm1,\pm2,\pm4,\pm8,\pm16\}$ & $2$\\
    $125$ & $\left<-1,4\right>$ & $2$\\

    \hline
\end{tabular}
\end{center}
\end{prop}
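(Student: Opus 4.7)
The plan is to establish the bound directly: I will verify that $\ell(D)=1$ for every effective $\F_p$-rational divisor $D$ on $X_\Delta(N)$ of degree at most $4$. Since a degree-$d$ map $X_\Delta(N)\to\PP^1_{\F_p}$ corresponds to the complete linear system of an effective $\F_p$-rational divisor $D$ with $\ell(D)\geq 2$, ruling out such $D$ for $d\leq 4$ forces $\gon_{\F_p}(X_\Delta(N))\geq 5$. Because we work over a finite field, there are only finitely many effective divisors of each bounded degree, so this is a finite (if potentially large) computation.

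First I would obtain a canonical model of $X_\Delta(N)$ over $\Q$ from the vanishing quadratic forms produced by Derickx's MD Sage package (as described in the introduction), reduce it modulo $p$, and verify smoothness of the reduction so that the canonical model is valid over $\F_p$. Next I would enumerate the points of $X_\Delta(N)$ over $\F_{p^k}$ for $k=1,2,3,4$, partition them into Galois orbits, and form every effective $\F_p$-rational divisor of degree at most $4$ as an unordered sum of orbits (with multiplicity, up to degree $4$). For each such $D$ I would compute $\ell(D)$ using \texttt{Magma}'s built-in Riemann-Roch machinery applied to the canonical model and check that $\ell(D)=1$.

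To keep the enumeration tractable I would exploit the Jacobian: fix a rational cusp $P_0\in X_\Delta(N)(\Q)$, which reduces to an $\F_p$-point. For each effective degree-$d$ divisor $D$ with $d\leq 4$, compute the class of $D-d\cdot P_0$ in $J_\Delta(N)(\F_p)$ and record it. Two divisors of the same degree are linearly equivalent iff their classes coincide, so $\ell(D)\geq 2$ for some degree-$d$ divisor iff the map $D\mapsto[D-d\cdot P_0]$ from effective degree-$d$ divisors into $J_\Delta(N)(\F_p)$ is not injective. Thus it suffices to check injectivity on each degree, which avoids repeatedly calling Riemann-Roch and reduces everything to equality tests in the finite group $J_\Delta(N)(\F_p)$.

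The main obstacle is the combinatorial size of the search for the case $(N,\Delta)=(125,\langle -1,4\rangle)$ at $p=2$: the genus is substantial, and the small prime $p=2$ produces many low-degree Galois orbits, so the number of effective degree-$\leq 4$ divisors is large. I would mitigate this by enumerating incrementally in the degree, pruning any partial sum whose class already coincides with a previously recorded one (since any extension of a coincidence remains a coincidence only if the rest matches), and by parallelising over orbit types. The $(31,\cdot)$ cases are far smaller and should go through directly; the whole verification is then a \texttt{Magma} computation whose output is recorded in the accompanying code repository.
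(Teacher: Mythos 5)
Your proposal is correct, and at bottom it is the same strategy as the paper: reduce the claim to the finite verification over $\F_p$ that no effective $\F_p$-rational divisor of degree at most $4$ moves in a pencil, which is exactly what $\gon_{\F_p}(X_\Delta(N))\geq 5$ means. The difference lies in how the finite search is organised. The paper does not enumerate all divisors of degree $\leq 4$: it notes that if $f$ has degree $d$ and $P_0$ is a rational point (every $X_\Delta(N)$ has a rational cusp, which stays rational in the reduction), then $1/(f-f(P_0))$ (or $f$ itself when $f(P_0)=\infty$) has the same degree and a pole at $P_0$; hence it suffices to check degree-$4$ divisors containing at least one $\F_p$-point, i.e.\ the shapes $1+1+1+1$, $1+1+2$, $1+3$, with lower degrees subsumed because adjoining points can only increase $\ell(D)$. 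It then verifies $\ell(D)=1$ for these divisors directly with \texttt{Magma}'s Riemann--Roch machinery. You instead enumerate all effective divisors of degree $\leq4$ built from Galois orbits and replace Riemann--Roch calls by injectivity of $D\mapsto[D-dP_0]$ in $J_\Delta(N)(\F_p)$ degree by degree, which is a correct equivalent criterion (two effective divisors of the same degree are linearly equivalent iff their classes agree, and a coincidence is exactly a $g^1_d$ with $d\leq4$). So your route trades Riemann--Roch computations for Picard-group arithmetic, while the paper's rational-point pruning is the more substantial saving in the search space; it directly addresses your own concern about the $(125,\left<-1,4\right>)$, $p=2$ case, and the two optimisations could profitably be combined. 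One small point of care, which you already flag: one must check that the reduced canonical model is smooth of the correct genus, so that the $\F_p$-curve one computes with really is the good reduction of $X_\Delta(N)$.
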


\begin{proof}
    Using \texttt{Magma}, we compute that there are no functions of degree $\leq4$ in $\F_p(X_\Delta(N))$. We are able to reduce the number of divisors that need to be checked by noting the following: If there exists a function $f$ over a field $k$ of a certain degree and if $c\in k$, then the function $g(x):=\frac{1}{f(x)-c}$ has the same degree and its polar divisor contains a $k$-rational point.

    In our case, this means that we only need to check the divisors of the form 
    \begin{itemize}
        \item $1+1+1+1$ - sums of $4$ $\F_p$-points,
        \item $1+1+2$ - sums of $2$ $\F_p$-points and an irreducible degree $2$ rational effective divisor,
        \item $1+3$ - sums of and $\F_p$ point and an irreducible degree $3$ rational effective divisor,
    \end{itemize}
    and we do not need to check the divisors of the form $4$ (irreducible degree $4$ rational effective divisors). 
    
    Notice that there is also no need to check the divisors of the form $1+1+1$, $1+2$, ... (i.e., divisors of degree $\leq3$). For example, since all divisors of the form $1+1+1+1$ have Riemann-Roch dimension equal to $1$, all divisors of the form $1+1+1$ will automatically have Riemann-Roch dimension equal to $1$.
\end{proof}

\begin{prop}\label{Fp_gonality_deg5}
The $\F_p$-gonality of the curve $X_\Delta(N)$ at least $6$ for the following values of $N$ and $\Delta$:

\begin{center}
\hspace*{-1.5cm}
\begin{longtable}{|c|c|c||c|c|c||c|c|c|}
\hline
\addtocounter{table}{-1}
$N$ & $\Delta$ & $p$ & $N$ & $\Delta$ & $p$ & $N$ & $\Delta$ & $p$\\
    \hline
    
    $29$ & $\{\pm1,\pm12\}$ & $3$ & $33$ & $\{\pm1,\pm10\}$ & $5$ & $34$ & $\{\pm1,\pm13\}$ & $3$\\
    $35$ & $\{\pm1,\pm11,\pm16\}$ & $3$ & $37$ & $\{\pm1,\pm10,\pm11\}$ & $3$ & $38$ & $\{\pm1,\pm7,\pm11\}$ & $3$\\
    $39$ & $\{\pm1,\pm16,\pm17\}$ & $5$ & $40$ & $\{\pm1,\pm19\}$ & $3$ & $41$ & $\left<-1,4\right>$ & $3$\\
    $41$ & $\{\pm1,\pm3,\pm9,\pm14\}$ & $2$ & $42$ & $\{\pm1,\pm5,\pm17\}$ & $5$ & $42$ & $\{\pm1,\pm13\}$ & $5$\\
    $43$ & $\{\pm1,\pm2\}$ & $3$ & $43$ & $\{\pm1,\pm6,\pm7\}$ & $3$ & $44$ & $\{\pm1,\pm21\}$ & $3$\\
    $45$ & $\{\pm1,\pm8,\pm17,\pm19\}$ & $2$ & $45$ & $\{\pm1,\pm14,\pm16\}$ & $2$ & $48$ & $\{\pm1,\pm23\}$ & $5$\\
    $49$ & $\{\pm1,\pm18,\pm19\}$ & $2$ & $51$ & $\left<-1,2\right>$ & $2$ & $52$ & $\left<-1,21\right>$ & $3$\\
    $52$ & $\left<-1,3\right>$ & $3$ & $53$ & $\left<-1,4\right>$ & $19$ & $54$ & $\{\pm1,\pm17,\pm19\}$ & $5$\\
    $55$ & $\left<-1,16\right>$ & $2$ & $55$ & $\{\pm1,\pm12,\pm21,\pm23\}$ & $2$ & $56$ & $\left<-1,3\right>$ & $3$\\
    $56$ & $\left<-1,9,15\right>$ & $3$ & $56$ & $\left<-1,5,9\right>$ & $3$ & $56$ & $\{\pm1,\pm13,\pm15,\pm27\}$ & $3$\\
    $57$ & $\left<-1,8,20\right>$ & $2$ & $57$ & $\left<-1,2\right>$ & $3$ & $58$ & $\left<-1,9\right>$ & $3$\\
    $60$ & $\{\pm1,\pm11,\pm19,\pm29\}$ & $7$ & $60$ & $\{\pm1,\pm11,\pm13,\pm23\}$ & $7$ & $60$ & $\{\pm1,\pm7,\pm11,\pm17\}$ & $7$\\
    $61$ & $\left<-1,4\right>$ & $5$ & $61$ & $\left<-1,8\right>$ & $2$ & $61$ & $\left<-1,29\right>$ & $2$\\
    $62$ & $\left<-1,27\right>$ & $2$ & $63$ & $\left<-1,4,5\right>$ & $2$ & $63$ & $\left<-1,8,20\right>$ & $2$\\
    $63$ & $\left<-1,8,10\right>$ & $2$ & $63$ & $\left<-1,5,8\right>$ & $2$ & $63$ & $\left<-1,2\right>$ & $2$\\
    $64$ & $\{\pm1,\pm15,\pm17,\pm31\}$ & $3$ & $65$ & $\left<-1,2,7\right>$ & $2$ & $65$ & $\left<-1,4,6\right>$ & $2$\\
    $65$ & $\left<-1,3,4\right>$ & $2$ & $65$ & $\left<-1,8,12\right>$ & $2$ & $66$ & $\left<-1,25\right>$ & $5$\\
    $67$ & $\left<-1,8\right>$ & $2$ & $68$ & $\left<-1,9\right>$ & $3$ & $69$ & $\left<-1.4\right>$ & $2$\\
    $70$ & $\left<-1,27\right>$ & $3$ & $70$ & $\left<-1,9\right>$ & $3$ 
    & $71$ & $\left<-1,20\right>$ & $3$\\
    $72$ & $\left<^-1,13,25\right>$ & $5$ & $72$ & $\left<-1,17,25\right>$ & $5$ & $72$ & $\left<-1,5\right>$ & $5$\\
    $73$ & $\left<-1,21\right>$ & $2$ & $73$ & $\left<-1,25\right>$ & $3$ & $74$ & $\left<-1,25\right>$ & $3$\\
    $75$ & $\left<-1,16\right>$ & $2$ & $77$ & $\left<-1,32\right>$ & $2$ & $77$ & $\left<-1,8\right>$ & $2$\\
    $77$ & $\left<-1,4\right>$ & $2$ & $78$ & $\left<-1,35,49\right>$ & $5$ & $79$ & $\left<-1,27\right>$ & $2$\\
    $80$ & $\left<-1,7,9\right>$ & $3$ & $80$ & $\left<-1,21,49\right>$ & $3$ & $81$ & $\left<-1,8\right>$ & $2$\\
    $85$ & $\left<-1,2,9\right>$ & $2$ & $85$ & $\left<-1,3,4\right>$ & $3$ & $87$ & $\left<-1,4\right>$ & $2$\\
    $88$ & $\left<-1,5\right>$ & $3$ & $88$ & $\left<-1,25,105\right>$ & $3$ & $89$ & $\left<-1,9\right>$ & $2$\\
    $91$ & $\left<-1,8,12\right>$ & $2$ & $91$ & $\left<-1,8,24\right>$ & $2$ & $91$ & $\left<-1,8,48\right>$ & $2$\\
    $91$ & $\left<-1,4,12\right>$ & $2$ & $92$ & $\left<-1,9\right>$ & $3$ & $95$ & $\left<-1,4\right>$ & $2$\\
    $95$ & $\left<-1,8\right>$ & $2$ & $96$ & $\left<-1,17,25\right>$ & $5$ & $96$ & $\left<-1,11,25\right>$ & $5$\\
    $100$ & $\left<-1,9\right>$ & $3$ & $101$ & $\left<-1,4\right>$ & $2$ & $103$ & $\left<-1,22\right>$ & $2$\\
    $104$ & $\left<-1,5,9\right>$ & $3$ & $109$ & $\left<-1,2\right>$ & $2$ & $109$ & $\left<-1,36\right>$ & $2$\\
    $111$ & $\left<-1,8\right>$ & $2$ & $111$ & $\left<-1,4\right>$ & $2$ & $119$ & $\left<-1,27\right>$ & $2$\\
    $119$ & $\left<-1,9\right>$ & $2$ & & & & & &\\

    \hline
\end{longtable}
\end{center}
\end{prop}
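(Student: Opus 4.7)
The plan is to directly imitate the strategy used in the proof of Proposition \ref{Fp_gonality_deg4}, but now searching for functions of degree $\leq 5$ rather than $\leq 4$. For each entry $(N,\Delta,p)$ in the table, I would first compute a canonical model of $X_\Delta(N)$ (via the MD Sage function \texttt{vanishing\_quadratic\_forms()} discussed in the introduction), reduce it modulo $p$, and then use \texttt{Magma}'s Riemann--Roch machinery to verify that no effective $\mathbb{F}_p$-rational divisor of degree $5$ satisfying the shape constraints below has Riemann--Roch dimension $\geq 2$.

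The same Poonen-style reduction applies: if $f \in \mathbb{F}_p(X_\Delta(N))$ has degree $d \leq 5$, then for any $c \in \mathbb{F}_p$ the function $1/(f-c)$ has the same degree and its polar divisor necessarily contains an $\mathbb{F}_p$-rational point. Hence it suffices to enumerate effective $\mathbb{F}_p$-rational divisors of degree exactly $5$ whose support contains at least one $\mathbb{F}_p$-rational point, i.e.\ those of shape
\[
1+1+1+1+1, \quad 1+1+1+2, \quad 1+1+3, \quad 1+2+2, \quad 1+4.
\]
The shapes $5$ and $2+3$ can be skipped, and as in Proposition \ref{Fp_gonality_deg4} we need not separately inspect divisors of degree $\leq 4$: if every degree-$5$ divisor of the above shapes has Riemann--Roch dimension $1$, the same holds for its subdivisors.

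The main obstacle is not conceptual but computational. The table is long, and for some entries (e.g.\ the $N \in \{109,111,119\}$ rows, or the curves at $N=91,95$) the genus is large enough that the canonical model sits in a moderately high-dimensional projective space and the enumeration of divisor-shape tuples scales roughly like $\#X_\Delta(N)(\mathbb{F}_{p^k})^j$ for the appropriate partition exponents $j$. This is precisely why the primes chosen in the table are almost all in $\{2,3,5\}$ (with only a few outliers such as $p=7$ for $N=60$ and $p=19$ for $N=53$): keeping $q=p^k$ small for $k\leq 4$ keeps the number of closed points of each degree manageable. The proof then reduces to documenting that the \texttt{Magma} script terminates with the expected negative answer for each row, with the actual verification recorded in the accompanying repository.
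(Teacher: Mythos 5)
Your proposal is correct in principle and follows the same computational skeleton as the paper (reduce modulo $p$, enumerate effective $\F_p$-rational divisors of degree $5$ of restricted shapes, and verify that all Riemann--Roch dimensions equal $1$), but it relies only on the weaker reduction of \Cref{Fp_gonality_deg4} — that a suitable translate $1/(f-c)$ has a polar divisor containing one rational point — whereas the paper's proof of this proposition uses a sharper pigeonhole refinement: if $\#X_\Delta(N)(\F_p)>d(p+1)$, then some fibre of $f$ contains at least $d+1$ rational points, so only divisors whose support contains at least $d+1$ $\F_p$-points need to be checked (for instance only the shapes $1+1+1+1+1$ and $1+1+1+2$ when $d=2$, with $1+1+3$ added when $d=1$). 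The practical difference matters: your list must include $1+1+3$, $1+2+2$ and $1+4$, hence an enumeration of closed points of degrees $3$ and $4$, which is exactly the cost the paper's trick avoids and which becomes heavy for rows such as $(N,\Delta,p)=(53,\left<-1,4\right>,19)$ (already about four hours with the stronger reduction) or the $p=7$ rows at $N=60$; on the other hand, the paper's refinement needs the extra hypothesis $\#X_\Delta(N)(\F_p)>d(p+1)$ verified curve by curve, which your version does not require, so your argument is a valid (if slower) alternative provided the computations actually terminate. One small correction: the polar divisor of $1/(f-c)$ contains an $\F_p$-point not for \emph{any} $c$, but for a suitably chosen $c$, namely $c=f(P)$ for some rational point $P$ (e.g.\ the reduction of a rational cusp), taking $f$ itself if $f(P)=\infty$; this is also the reading intended in the paper's degree-$4$ argument.
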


\begin{proof}
    Using \texttt{Magma}, we compute that there are no functions of degree $\leq5$ in $\F_p(X_\Delta(N))$. We are able to reduce the number of divisors that need to be checked by noting the following (a similar idea as in \cite[Propositions 5.13, 5.14]{NajmanOrlic22}): 
    
    Suppose that there exists a function $f:X_\Delta(N)\to\mathbb{P}^1$ over $\F_p$ of a certain degree and that $\#X_\Delta(N)(\F_p)>d(p+1)$. By the pigeonhole principle, it follows that there is a point $c\in \mathbb{P}^1(\F_3)$ such that $f^{-1}(c)$ contains at least $d+1$ $\F_p$-rational points. Therefore, the polar divisor of a function $\displaystyle g(x):=\frac{1}{f(x)-c}$ (which is also defined over $\F_p$ and has the same degree as $f$) contains at least $d+1$ $\F_p$-rational points.

    This means that we only need to check the divisors that contain at least $d+1$ $\F_p$-rational points. For example, if $d=2$, we only need to check the divisors of the form $1+1+1+1+1$ and $1+1+1+2$, and if $d=1$, we only need to check the divisors of the form $1+1+1+1+1$, $1+1+1+2$, and $1+1+3$.
\end{proof}

\begin{prop}\label{Fp_gonality_large}
    The $\F_p$-gonality of the curve $X_\Delta(N)$ is bounded from below by $d$ for the following values of $N$ and $\Delta$:

    \begin{center}
\begin{tabular}{|c|c|c|c|}
\hline
$N$ & $\Delta$ & $p$ & $d$\\
    \hline
    
    $35$ & $\{\pm1,\pm6\}$ & $2$ & $8$\\
    $37$ & $\{\pm1,\pm6\}$ & $2$ & $9$\\
    $39$ & $\{\pm1,\pm14\}$ & $2$ & $8$\\
    $40$ & $\{\pm1,\pm11\}$ & $3$ & $7$\\
    $40$ & $\{\pm1,\pm9\}$ & $3$ & $8$\\

    \hline
\end{tabular}
\end{center}
\end{prop}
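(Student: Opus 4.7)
The plan is to follow exactly the strategy used in the proofs of \Cref{Fp_gonality_deg4} and \Cref{Fp_gonality_deg5}, but extended to higher target degrees $d\in\{7,8,9\}$. For each pair $(N,\Delta)$ in the table, together with the given prime $p\nmid N$, I would construct the canonical model of $X_\Delta(N)$ over $\F_p$ using the MD Sage function \texttt{vanishing\_quadratic\_forms()} together with \Cref{thm:CtrigonalQtetragonalthm} (none of the listed curves are trigonal, so canonical models are cut out by quadrics), and then compute, for every effective $\F_p$-rational divisor $D$ of degree at most $d-1$, the dimension of the Riemann--Roch space $\ell(D)$. If all such dimensions equal $1$, then $\textup{gon}_{\F_p}(X_\Delta(N))\geq d$.

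The key to making this feasible is the pigeonhole reduction from the proof of \Cref{Fp_gonality_deg5}. Concretely, to verify $\textup{gon}_{\F_p}(X_\Delta(N))\geq d$, it suffices to rule out degree-$(d-1)$ maps to $\PP^1$. If I can locate an integer $e$ with $\#X_\Delta(N)(\F_p)>e(p+1)$, then any putative degree-$(d-1)$ map $f$ over $\F_p$ has some fibre over a point of $\PP^1(\F_p)$ containing at least $e+1$ $\F_p$-rational points, so after replacing $f$ by $1/(f-c)$ I may assume the polar divisor contains at least $e+1$ $\F_p$-points. Thus I only need to examine effective $\F_p$-rational divisors of degree $d-1$ whose $\F_p$-rational part has degree $\geq e+1$. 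Since each curve in the table has very many $\F_p$-points relative to $p+1$ (each is a modular curve of fairly high level reduced at a small prime), one can take $e$ reasonably large, cutting the divisor enumeration down to a manageable size. Finally, there is no need to separately treat smaller degrees $\leq d-2$: any map of smaller degree would yield a degree-$(d-1)$ divisor of Riemann--Roch dimension $\geq 2$ by adding $\F_p$-rational points, so ruling out degree $d-1$ automatically rules out all smaller degrees.

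The main obstacle I anticipate is purely computational. The genera of the curves in this table are the largest of those treated by the $\F_p$-gonality method in the paper, so the canonical models live in high-dimensional projective space and the number of effective $\F_p$-rational divisors of degree up to $8$ is considerably larger than in \Cref{Fp_gonality_deg5}. The pigeonhole restriction must be pushed as far as possible, and in some cases it may be necessary to further organize the enumeration by the partition type of the divisor (e.g.\ $1+1+\cdots+1$, $1+\cdots+1+2$, $1+\cdots+1+3$, etc., always keeping the $\F_p$-rational part of degree at least $e+1$) so that Magma's linear algebra over $\F_p$ remains tractable on the Euler server. Once all these Riemann--Roch dimensions are verified to be $1$, the claimed lower bounds $d$ on $\textup{gon}_{\F_p}(X_\Delta(N))$ follow immediately.
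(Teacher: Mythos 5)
Your proposal is correct and takes essentially the same route as the paper: the paper's proof likewise uses \texttt{Magma} to check that there are no functions of degree $\leq d-1$ in $\F_p(X_\Delta(N))$, relying on exactly the pigeonhole reduction of Proposition~\ref{Fp_gonality_deg5} to restrict the Riemann--Roch computations to divisors with many $\F_p$-rational points.
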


\begin{proof}
    Using \texttt{Magma}, we compute that there are no functions of degree $\leq d-1$ in $\F_p(X_\Delta(N))$. We use the same idea as in \Cref{Fp_gonality_deg5}. 
\end{proof}

\begin{remark}
    Most computations for Propositions \ref{Fp_gonality_deg4}, \ref{Fp_gonality_deg5}, and \ref{Fp_gonality_large} were relatively fast (several minutes). However, there were some cases that took longer to finish, For example, the cases $(N,\Delta)=(78,\left<-1,35,49\right>),(96,\left<-1,11,25\right>),(104,\left<-1,5,9\right>)$ took around $2$ hours to finish.
    
    Also, the computation time for $(N,\Delta)=(53,\left<-1,4\right>)$ was around $4$ hours, mostly due to the larger field $\F_{19}$ the program was working with. This is because the number of Riemann-Roch spaces that need to be computed to give a lower bound of $d$ on the $\F_p$-gonality is $O(p^d)$ \cite[Section 8.1]{NajmanOrlic22} and it is therefore advisable to choose small values of $p$ when computing the $\F_p$-gonality.

    From this we can also see that the complexity grows exponentially with $d$, meaning that computing the $\F_p$-gonality using this method becomes more difficult and increasingly unfeasible as the gonality grows, especially in the high genus cases.  
\end{remark}

\subsection{Castelnuovo-Severi inequality}\label{CSsection}

This is a very useful tool for producing a lower bound on the gonality (see \cite[Theorem 3.11.3]{Stichtenoth09} for a proof).
\begin{prop}[Castelnuovo-Severi inequality] 
\label{tm:CS}
Let $k$ be a perfect field, and let $X,\ Y, \ Z$ be curves over $k$ with genera $g(X), g(Y), g(Z)$. Let non-constant morphisms $\pi_Y:X\rightarrow Y$ and $\pi_Z:X\rightarrow Z$ over $k$ be given, and let their degrees be $m$ and $n$, respectively. Assume that there is no morphism $X\rightarrow X'$ of degree $>1$ through which both $\pi_Y$ and $\pi_Z$ factor. Then the following inequality holds:
\begin{equation} \label{eq:CS}
g(X)\leq m \cdot g(Y)+n\cdot g(Z) +(m-1)(n-1).
\end{equation}
\end{prop}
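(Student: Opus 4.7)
The plan is to pass to the product surface $S = Y\times Z$, replace $X$ by the image $C\subseteq S$ of the morphism $\varphi = (\pi_Y,\pi_Z)\colon X\to S$, and bound $g(X)$ from above by the arithmetic genus $p_a(C)$ computed via adjunction.

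The first step is to argue that $\varphi$ is birational onto $C$. Both $\pi_Y$ and $\pi_Z$ factor as $X \xrightarrow{\varphi} C \hookrightarrow S$ followed by the respective projections. If $\deg(\varphi) > 1$, then taking $X'$ to be the normalisation of $C$ produces a morphism $X\to X'$ of degree $>1$ through which both $\pi_Y$ and $\pi_Z$ factor, contradicting the hypothesis. Hence $\varphi$ is birational, $X$ is the normalisation of $C$, and $g(X)\le p_a(C)$.

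The main step is the computation of $p_a(C)$. On the smooth projective surface $S$, adjunction reads $2p_a(C)-2 = C^2 + C\cdot K_S$. Writing $K_S = \mathrm{pr}_1^* K_Y + \mathrm{pr}_2^* K_Z$, the projection formula combined with the fact that the two projections restrict to $C$ as maps of degrees $m$ and $n$ respectively gives
\[
C\cdot K_S \;=\; m(2g(Y)-2)+n(2g(Z)-2).
\]
For the self-intersection, decompose the class of $C$ in the Néron--Severi group as $[C] = n[F_Y] + m[F_Z] + \gamma$, where $F_Y = \{y_0\}\times Z$ and $F_Z = Y\times\{z_0\}$ are a vertical and a horizontal fibre (the coefficients forced by $C\cdot F_Z = n$ and $C\cdot F_Y = m$), and $\gamma$ lies in the orthogonal complement of $\langle F_Y,F_Z\rangle$. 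The Hodge index theorem applied on $S$ gives $\gamma^2\le 0$, so
\[
C^2 \;=\; (nF_Y+mF_Z)^2 + \gamma^2 \;\le\; 2mn.
\]
Substituting both expressions into adjunction and rearranging yields $p_a(C)\le m\,g(Y) + n\,g(Z) + (m-1)(n-1)$, which combined with $g(X)\le p_a(C)$ is the desired inequality.

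The delicate point is the bound $C^2\le 2mn$: one needs the Hodge-index-theoretic fact that the ``correspondence'' part $\gamma$ of $[C]$ in $\mathrm{NS}(Y\times Z)$ has non-positive self-intersection. Over an arbitrary perfect field one can bypass this surface-theoretic step by the purely function-field route taken in Stichtenoth: the hypothesis forces $k(X)$ to be the compositum $k(Y)\cdot k(Z)$, and a careful application of Riemann--Hurwitz to $\pi_Y$ and $\pi_Z$, together with an inclusion-exclusion count of the holomorphic differentials on $X$ pulled back from $Y$ and from $Z$, produces the same inequality without appealing to intersection theory on $Y\times Z$.
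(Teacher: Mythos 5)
Your proof is correct, but it takes a genuinely different route from the one the paper relies on: the paper gives no argument of its own and cites Stichtenoth's Theorem 3.11.3, i.e.\ the function-field proof of Castelnuovo's inequality, in which the non-factorization hypothesis is translated into $k(X)$ being the compositum $k(Y)\cdot k(Z)$ and the genus bound is extracted by Riemann--Roch estimates inside that compositum, uniformly over any perfect constant field and without ever leaving dimension one. You instead give the classical geometric proof on the product surface $Y\times Z$: the hypothesis is used exactly once, to make $\varphi=(\pi_Y,\pi_Z)$ birational onto its image $C$ (your normalization trick is the right way to see this), and then adjunction together with the Hodge index bound $\gamma^2\le 0$ yields $p_a(C)\le m\,g(Y)+n\,g(Z)+(m-1)(n-1)$; the intermediate computations (coefficients $n[F_Y]+m[F_Z]$ forced by $C\cdot F_Y=m$, $C\cdot F_Z=n$, the value $C\cdot K_S=m(2g(Y)-2)+n(2g(Z)-2)$, and $(nF_Y+mF_Z)^2=2mn$) are all correct. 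Two small remarks. First, to invoke adjunction and the Hodge index theorem over a general perfect $k$ you should pass to $\overline{k}$, which is harmless because genera, $p_a(C)$, and $\deg\varphi$ are geometric invariants; note that your birationality step only uses the non-factorization hypothesis over $k$ itself, so your argument in fact delivers the refined version of Khawaja--Siksek quoted right after the proposition, which is a point worth making explicit. Second, your closing sketch of the function-field alternative slightly misdescribes Stichtenoth's proof (it does not proceed by an inclusion--exclusion count of holomorphic differentials but by Riemann--Roch applied to suitably chosen divisors in the compositum); since that paragraph is only an aside, this does not affect the validity of your main argument.
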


Since $\C$ and $\Q$ are both perfect fields, we can use Castelnuovo-Severi inequality to get lower bounds on both $\C$ and $\Q$-gonalities.

As we can see, the assumption of the Castelnuovo-Severi inequality is that there is no morphism $X\to X'$ defined over $\overline{k}$ through which both $\pi_Y$ and $\pi_Z$ factor. However, Khawaja and Siksek have recently shown \cite[Theorem 14]{KhawajaSiksek2023} that we can weaken this assumption. Namely, that there is no morphism $X\to X'$ defined over $k$ through which both $\pi_Y$ and $\pi_Z$ factor.

\begin{prop}\label{CSprop}
The $\C$-gonality of the curve $X_\Delta(N)$ is at least $6$ for the following values of $N$ and $\Delta$:

\begin{center}
\begin{tabular}{|c|c|c|c|c|c|c|}
\hline
$N$ & $\Delta$ & \textup{LMFDB label of }$X_\Delta(N)$ & $g(X_\Delta(N))$ & $Y$ & $\deg$ & $g(Y)$\\
    \hline

$48$ & $\{\pm1,\pm7\}$ & $48.384.19.bj.1$ & $19$ & $X_{\{\pm1,\pm7,\pm17,\pm23\}}(48)$ & $2$ & $7$\\
$48$ & $\{\pm1,\pm17\}$ & $48.384.19.bc.1$ & $19$ & $X_{\{\pm1,\pm7,\pm17,\pm23\}}(48)$ & $2$ & $7$\\
$50$ & $\{\pm1,\pm7\}$ & $50.450.22.f.1$ & $22$ & $25.150.4.f.1$ & $3$ & $4$\\
$62$ & $\{\pm1,\pm5,\pm25\}$ & $62.480.31.c.1$ & $31$ & $31.160.6.c.1$ & $3$ & $6$\\
$72$ & $\{\pm1,\pm17,\pm19,\pm35\}$ & $72.432.21.tx.1$ & $21$ & $36.216.7.u.1$ & $2$ & $7$\\
$74$ & $\left<-1,23\right>$ & $74.342.22.b.1$ & $22$ & $37.114.4.b.2$ & $3$ & $4$\\
$98$ & $\left<-1,27\right>$ & $98.504.19.b.1$ & $19$ & $49.168.3.b.1$ & $3$ & $3$\\
    \hline
\end{tabular}
\end{center}
\end{prop}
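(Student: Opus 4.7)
The plan is to apply the Castelnuovo-Severi inequality (\Cref{tm:CS}), together with the Khawaja-Siksek refinement, to the given morphism $\pi_Y \colon X_\Delta(N) \to Y$ of degree $m$ and to a hypothetical gonality map $\pi_Z \colon X_\Delta(N) \to \mathbb{P}^1$ of degree $n \leq 5$. Since $g(\mathbb{P}^1)=0$, the inequality reduces to
\[
g(X_\Delta(N)) \leq m\cdot g(Y) + (m-1)(n-1).
\]

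First I would numerically verify that this inequality fails for each row of the table and every $n\in\{2,3,4,5\}$, so that \emph{whenever} CS applies we obtain a contradiction. The tightest case in each row is $n=5$: for example, row 1 gives $2\cdot 7+1\cdot 4 = 18<19$; row 3 gives $3\cdot 4+2\cdot 4 = 20<22$; row 4 gives $3\cdot 6+2\cdot 4 = 26<31$; row 5 gives $2\cdot 7+1\cdot 4 = 18<21$; row 7 gives $3\cdot 3+2\cdot 4 = 17<19$. The other $n$ give strictly smaller right-hand sides, so the numerical step is routine.

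Second, I would rule out a common factorisation. Suppose that $\pi_Y$ and $\pi_Z$ both factor through some $X_\Delta(N)\to X'$ of degree $d>1$. Then $d\mid\gcd(m,n)$, and the residual maps $X'\to Y$ and $X'\to\mathbb{P}^1$ have degrees $m/d$ and $n/d$. For the rows with $m=3$ this forces $d=3$, hence $X'\cong Y$ and $3\mid n$; the only admissible value $n=3$ then yields a degree-$1$ map $Y\to\mathbb{P}^1$, i.e.\ $Y\cong\mathbb{P}^1$, contradicting $g(Y)\geq 3$. For the rows with $m=2$ we must have $d=2$ and $n$ even: the case $n=2$ again forces $Y\cong\mathbb{P}^1$, contradicting $g(Y)=7$, while the case $n=4$ forces $\textup{gon}_\C(Y)\leq 2$.

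The main obstacle is therefore handling the $m=2,\ n=4$ case, where I need to show that $Y$ is not hyperelliptic. For rows 1 and 2 the curve $Y=X_{\{\pm1,\pm7,\pm17,\pm23\}}(48)$ has genus $7$ and does not appear on the Ishii--Momose list \cite{IshiiMomose} of hyperelliptic intermediate modular curves, and similarly the genus-$7$ curve $36.216.7.u.1$ arising in row 5 is not hyperelliptic. As a safety net, this can be checked directly by computing the canonical model of $Y$ using the MD Sage routine already employed in the paper and verifying that it is not a rational normal scroll (equivalently, that the canonical image is not contained in a scroll). Once the hyperelliptic possibility for $Y$ is excluded, the CS hypothesis is in force in every row and the numerical check above closes the proof.
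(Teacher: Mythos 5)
Your proposal is correct and follows essentially the same route as the paper: apply the Castelnuovo--Severi inequality to the known degree-$2$ or degree-$3$ quotient map $X_\Delta(N)\to Y$ and a hypothetical gonality map of degree $n\leq 5$, rule out a common factorization (the only delicate case being $m=2$, $n=4$, which requires $Y$ not hyperelliptic), and check that the numerical inequality fails in every row. The only cosmetic differences are that you also dispose of $n=2,3$ directly via CS rather than quoting the hyperelliptic/trigonal classifications for $X_\Delta(N)$ itself, and you certify non-hyperellipticity of $Y$ via the Ishii--Momose list rather than the paper's LMFDB lookup.
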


\begin{proof}
    We know from \cite{IshiiMomose} and \cite{Jeon2007} that these curves $X_\Delta(N)$ are neither hyperelliptic nor trigonal. Therefore, their $\C$-gonality is at least $4$.

    Suppose that the curve $X_\Delta(N)$ is tetragonal for some values of $N$ and $\Delta$ from this table. This would mean that there is a degree $4$ morphism from $X_\Delta(N)$ to $\mathbb{P}^1$. We can easily check on LMFDB that there is a degree $\deg$ morphism from $X_\Delta(N)$ to the curve $Y$.
    
    If $\deg=3$, then these two morphisms surely do not factor through a morphism of degree $>1$. If $\deg=2$ and these two morphisms factor through a morphism $X_\Delta(N)\to X'$, then this morphism $X_\Delta(N)\to X'$ must be of degree $2$ and we must have $X'\cong Y$. Therefore, we must have 
    $$X_\Delta(N)\xrightarrow{2} Y\xrightarrow{2} \mathbb{P}^1,$$
    meaning that the curve $Y$ is hyperelliptic. However, we can easily check on LMFDB that this is not the case and we get a contradiction.

    This means that we can apply Castelnuovo-Severi inequality to these two morphisms to get
    $$g(X_\Delta(N))\leq 4\cdot0+\deg\cdot g(Y)+3(\deg-1).$$
    This inequality does not hold for these values of $N$ and $\Delta$, however, meaning that there is no degree $4$ morphism from $X_\Delta(N)$ to $\mathbb{P}^1$.

    Suppose now that the curve $X_\Delta(N)$ is pentagonal for some values of $N$ and $\Delta$ from the table. This would mean that there is a degree $5$ morphism from $X_\Delta(N)$ to $\mathbb{P}^1$. Since $\deg=2,3$ for all entries in the table, this hypothetical morphism and the degree $\deg$ morphism to $Y$ surely do not factor through a morphism of degree $>1$.

    This means that we can apply Castelnuovo-Severi inequality to these two morphisms to get
    $$g(X_\Delta(N))\leq 5\cdot0+\deg\cdot g(Y)+4(\deg-1).$$
    This inequality does not hold for these values of $N$ and $\Delta$, however, meaning that there is no degree $5$ morphism from $X_\Delta(N)$ to $\mathbb{P}^1$.
\end{proof}

\subsection{Rational morphisms to $\mathbb{P}^1$}\label{rationalmapsection}

In \Cref{Fpsection} and \Cref{CSsection}, we were giving lower bounds on the $\C$ and $\Q$-gonality of curves $X_\Delta(N)$. Now we give upper bounds by finding rational morphisms from $X_\Delta(N)$ to $\mathbb{P}^1$.

\begin{prop}\label{genus5gonalmap}
    There exists a degree $4$ rational morphism from $X_\Delta(N)$ to $\mathbb{P}^1$ for the following values of $N$ and $\Delta$:

\begin{center}
\begin{tabular}{|c|c|}
\hline
$N$ & $\Delta$\\
    \hline
    
$30$ & $\{\pm1,\pm11\}$\\
$32$ & $\{\pm1,\pm15\}$\\
$33$ & $\{\pm1,\pm2,\pm4,\pm8,\pm16\}$\\
$35$ & $\{\pm1,\pm4,\pm6,\pm9,\pm11,\pm16\}$\\
$36$ & $\{\pm1,\pm17\}$\\
$39$ & $\{\pm1,\pm4,\pm10,\pm14,\pm16,\pm17\}$\\
$40$ & $\{\pm1,\pm9,\pm11,\pm19\}$\\
$41$ & $\left<-1,2\right>$\\
$45$ & $\{\pm1,\pm4,\pm11,\pm14,\pm16,\pm19\}$\\
$64$ & $\left<-1,9\right>$\\

    \hline
\end{tabular}
\end{center}
    
\end{prop}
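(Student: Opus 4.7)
The plan is to exhibit, for each pair $(N,\Delta)$ in the table, an explicit degree $4$ $\Q$-rational function on $X_\Delta(N)$; equivalently, an effective $\Q$-rational divisor $D$ of degree $4$ with $\dim_\Q L(D) \geq 2$. Once such a $D$ is found, any two $\Q$-linearly independent sections $f_1, f_2 \in L(D)$ define a morphism $[f_1:f_2] : X_\Delta(N) \to \mathbb{P}^1$ of degree at most $4$; since none of the listed curves is trigonal by \cite{Jeon2007}, the degree is exactly $4$.

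First I would construct the canonical model of $X_\Delta(N)$ in $\mathbb{P}^{g-1}$ following the procedure outlined at the end of the introduction, using the function \texttt{vanishing\_quadratic\_forms()} from MD Sage to produce the defining quadrics. All of the listed curves have genus at least $5$ and are neither trigonal nor smooth plane quintics, so the canonical image is cut out by the $(g-2)(g-3)/2$ vanishing quadrics. The cusps of $X_\Delta(N)$ can be located on the canonical model over $\Q$, and the rational cusps provide a plentiful supply of $\Q$-rational points from which to assemble candidate divisors. A \texttt{Magma} Riemann-Roch computation then tests whether a candidate $D$ satisfies $\dim_\Q L(D) \geq 2$; for each entry in the table such a $D$ can be built from sums of rational cusps, occasionally with multiplicity or together with a small-degree Galois orbit of non-rational cusps.

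For several entries a more conceptual description is also available: whenever $X_\Delta(N)$ admits a degree $d$ $\Q$-rational map onto a curve $Y$ with $\gon_\Q Y \leq 4/d$ --- for instance via the natural projection $X_\Delta(N) \to X_{\Delta'}(N)$ for an intermediate $\Delta \subseteq \Delta'$ whose target has low gonality, or via a further Atkin-Lehner quotient --- Proposition~\ref{poonen}(vi) produces the degree $4$ map immediately and no Riemann-Roch computation is needed. This handles the smaller levels in a uniform way, and for the residual cases where every natural quotient still has gonality $\geq 4$ (notably $(41, \langle -1, 2\rangle)$ and $(64, \langle -1, 9\rangle)$), the direct search on the canonical model is required.

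The main obstacle will be computational rather than conceptual: computing the quadrics defining the canonical model at the higher-genus entries and then certifying a Riemann-Roch dimension of at least $2$ for a degree $4$ divisor is resource-intensive, most acutely at $N = 64$, though each individual case is a finite and tractable check once the canonical model is in hand.
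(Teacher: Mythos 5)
Your strategy would in all likelihood succeed, but it is a genuinely different route from the one taken in the paper. The paper exploits the fact that every curve in this list has genus $5$: after producing the canonical model (an intersection of three quadrics) with the MD Sage package, it simply calls \texttt{Magma}'s built-in \texttt{Genus5GonalMap}, which returns both the $\C$-gonality (here $4$) and explicit equations for a gonal map; since the returned equations happened to be defined over $\Q$, the degree $4$ rational morphism is obtained in one uniform step, with no divisor search and no case distinctions. Your plan -- searching for an effective $\Q$-rational degree $4$ divisor with $\dim L(D)\geq 2$, or composing a projection $X_\Delta(N)\to X_{\Delta'}(N)$ with a low-gonality map via \Cref{poonen}(vi) -- is exactly the method the paper uses for \emph{other} curves in \Cref{magmamapprop} and \Cref{quotientmapprop}, so it is certainly a legitimate alternative, and for several entries (e.g.\ degree $2$ projections to hyperelliptic $X_0(N)$ for $N=30,33,35,39,40,41$) the quotient argument disposes of the case immediately. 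The one weak point is your assertion that in the remaining cases a suitable $D$ \emph{can} be built from cusps alone: nothing guarantees that a gonal pencil contains a divisor supported on cusps, and the paper itself ran into precisely this obstruction in \Cref{magmamapprop} for $(37,\{\pm1,\pm10,\pm11\})$, where it had to hunt for quadratic points; so for the residual cases (such as $(45,\left<-1,4\right>)$ and $(64,\left<-1,9\right>)$, where the natural quotients have gonality $3$) your search may need to be widened to non-cuspidal rational and low-degree points, whereas \texttt{Genus5GonalMap} sidesteps this entirely and, as a bonus, certifies that the $\C$-gonality is exactly $4$, a fact the paper reuses in its tables.
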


\begin{proof}
    All these curves $X_\Delta(N)$ are of genus $5$ and from the discussion at the end of the Introduction we know that their canonical models are intersections of three quadrics. Using Sage, we obtained canonical models for these curves. With these models, we used a \texttt{Magma} function Genus5GonalMap(C) which returned that the $\C$-gonality of the curve $X_\Delta(N)$ is $4$ and also gave the equations of this degree $4$ morphism. The equations were all defined over $\Q$, therefore these curves $X_\Delta(N)$ are all $\Q$-tetragonal.
\end{proof}

\begin{remark}
    \texttt{Magma} has inbuilt functions Genus2GonalMap(C), Genus3GonalMap(C), Genus4GonalMap(C), Genus5GonalMap(C), and Genus6GonalMap(C) which return the $\C$-gonality of the curve $C$ and a gonal map to $\mathbb{P}^1$, defined over some number field.
    
    However, these functions seem to expect the 'usual' model of $C$. For example, they expect the model of a genus $3$ curve to be a single quartic and a model of a genus $5$ curves to be an intersection of three quadrics. Otherwise, Magma can return an error in the current version of \texttt{Magma} (V2.28-9.). 
    
    For example, for genus $5$ quotients of the modular curve $X_0(N)$, the inbuilt \texttt{Magma} function X0NQuotient(C) returns a model that is an intersection of cubics instead of an intersection of three quadrics.
\end{remark}

\begin{prop}\label{magmamapprop}
    There exists a degree $d$ rational morphism from $X_\Delta(N)$ to $\mathbb{P}^1$ for the following values of $N$ and $\Delta$:

\begin{center}
\begin{tabular}{|c|c|c|}
\hline
$N$ & $\Delta$ & $d$ \\
    \hline
    
$29$ & $\{\pm1,\pm12\}$ & $6$\\
$31$ & $\{\pm1,\pm2\}$ & $5$\\
$31$ & $\{\pm1,\pm5,\pm6\}$ & $5$\\
$33$ & $\{\pm1,\pm10\}$ & $6$\\
$34$ & $\{\pm1,\pm13\}$ & $6$\\
$35$ & $\{\pm1,\pm11,\pm16\}$ & $6$\\
$35$ & $\{\pm1,\pm6,\pm8,\pm13\}$ & $4$\\
$37$ & $\{\pm1,\pm10,\pm11\}$ & $6$\\
$39$ & $\{\pm1,\pm16,\pm17\}$ & $6$\\
$40$ & $\{\pm1,\pm11\}$ & $7$\\
$44$ & $\{\pm1,\pm5,\pm7,\pm9,\pm19\}$ & $5$\\
$55$ & $\left<-1,4\right>$ & $4$\\

    \hline
\end{tabular}
\end{center}
    
\end{prop}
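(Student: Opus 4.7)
The plan is to follow the template of \Cref{genus5gonalmap} and exhibit each required morphism computationally. First, for each pair $(N,\Delta)$ in the table I compute the canonical model of $X_\Delta(N)$ in $\PP^{g-1}$ via the MD Sage function \texttt{vanishing\_quadratic\_forms()}. As observed in the introduction, every curve in the table has genus $g\geq 5$, none is trigonal (by \cite{Jeon2007}), and none is a smooth plane quintic (by \cite[Theorem 1.1]{Anni2023}), so the canonical ideal is cut out by the $(g-2)(g-3)/2$ quadrics this function returns.

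For every entry with $g\leq 6$ I then feed the canonical model to the appropriate one of \texttt{Magma}'s inbuilt routines Genus3GonalMap$(C)$, Genus4GonalMap$(C)$, Genus5GonalMap$(C)$, or Genus6GonalMap$(C)$. Each such routine returns the $\C$-gonality of $C$ together with a gonal map to $\PP^1$. Whenever $d$ equals the $\C$-gonality I am done after checking that the defining polynomials have rational coefficients; whenever the gonality turns out to be strictly less than the listed $d$, I compose the gonal map with a suitable $\Q$-rational self-map of $\PP^1$, or multiply it by a generic element of $\Q(X_\Delta(N))$, in order to reach degree exactly $d$.

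The only entry outside this range is $(N,\Delta)=(40,\{\pm1,\pm11\})$ with $d=7$, for which \texttt{Magma}'s gonal-map functions are no longer available. Here I construct the map by hand on the canonical model: I search for an effective $\Q$-rational divisor $D$ with $\deg D=7$ and $\dim|D|\geq 1$, and use two independent global sections of $\OO(D)$ to build a morphism $X_\Delta(40)\to\PP^1$ of degree at most $7$, verifying equality by computing a generic fibre. Natural candidates for $D$ are sums of rational cusps and small orbits of $\F_p$-rational points lifted back to characteristic zero.

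The main obstacle I foresee is ensuring that every morphism is defined over $\Q$ rather than over some auxiliary number field, since \texttt{Magma}'s gonal-map routines are free to return maps over a splitting field of an intermediate polynomial. For each produced map I therefore verify $\Gal(\overline{\Q}/\Q)$-invariance of the defining equations; if the map is produced over a strictly larger field I replace it by its Galois average, which lives in $\Q(X_\Delta(N))$ and has the same degree. All of these verifications are carried out by the scripts in the linked repository.
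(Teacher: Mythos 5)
There is a genuine gap: your plan rests on the assumption that all entries except $(40,\{\pm1,\pm11\})$ have genus at most $6$, but almost none of them do. The genera here are $8$ for $N=29$, $11$ for $(33,\{\pm1,\pm10\})$, $9$ for $N=34$, $9$ and $7$ for the two $N=35$ entries, $10$ for $N=37$, $9$ for $N=39$, $13$ for $N=40$, $8$ for $N=44$, and $9$ for $(55,\left<-1,4\right>)$; \texttt{Magma}'s inbuilt gonal-map routines stop at \texttt{Genus6GonalMap}, so they are unavailable for all of these. Even for the two genus-$6$ entries at $N=31$ the plan fails: there the $\C$-gonal map has degree $4$ (\Cref{tetragonalgenus6}) and is \emph{not} definable over $\Q$, since the $\Q$-gonality of these curves is $5$ (\Cref{Fp_gonality_deg4}); composing a degree-$4$ map with a $\Q$-rational self-map of $\PP^1$ only produces degrees that are multiples of $4$, and "multiplying by a generic function" gives no control on the degree, so neither device produces the required degree-$5$ $\Q$-rational map. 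The Galois-averaging fix is also false as stated: the trace of a gonal map over a larger field need not have the same degree (its polar divisor spreads over all conjugates, and the average can even be constant), so rationality cannot be restored this way --- indeed for $N=31$ no degree-$4$ map over $\Q$ exists at all.

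The workable part of your proposal is the fallback you reserve for $(40,\{\pm1,\pm11\})$, and this is essentially what the paper does for \emph{every} entry: compute the canonical model, then search for an effective $\Q$-rational divisor $D$ of degree $d$ with Riemann--Roch dimension at least $2$, whose linear system yields the desired map over $\Q$. Two caveats you would need to address to make that route complete: "lifting $\F_p$-points to characteristic zero" is not a valid construction --- one must find rational (or low-degree) points on the characteristic-zero model directly, e.g.\ by intersecting the canonical model with rational hyperplanes; and for $(37,\{\pm1,\pm10,\pm11\})$ a divisor supported on rational points does not suffice, so one must also search for quadratic points (the paper finds a degree-$6$ divisor that is a sum of four rational points and an irreducible degree-$2$ rational divisor).
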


\begin{proof}
    We used Sage to find a canonical model for these curves $X_\Delta(N)$. After that, we used \texttt{Magma} to find a degree $d$ rational effective divisor with Riemann-Roch dimension at least $2$. In all cases except for $(N,\Delta)=(37,\{\pm1,\pm10,\pm11\})$ this divisor was a sum of $d$ rational points. 
    
    For $(N,\Delta)=(37,\{\pm1,\pm10,\pm11\})$ we were not able to find a degree $6$ function whose polar divisor is supported on rational points so we had to search for quadratic points.

    We searched for quadratic points by intersecting the curve with hyperplanes of the form
    $$b_0x_0+\ldots+b_kx_k=0,$$
    where $b_0,\ldots,b_k\in \Z$ are coprime and chosen up to a certain bound, a similar idea as in \cite[Section 3.2]{Box19}. Note that, in a quadratic point $(x_0,\ldots,x_k)$, already its first three coordinates must be linearly dependent over $\Q$. Therefore, it is enough to check the hyperplanes
    $$b_0x_0+b_1x_1+b_2x_2=0.$$
    We used these quadratic points to find a degree $6$ effective rational divisor with Riemann-Roch dimension $2$. This divisor was a sum of $4$ rational points and an irreducible degree $2$ rational divisor
\end{proof}

\begin{prop}\label{quotientmapprop}
    There exists a degree $d$ rational morphism from $X_\Delta(N)$ to $\mathbb{P}^1$ for the following values of $N$ and $\Delta$:

\begin{center}
\begin{tabular}{|c|c|c|c|c|c|c|}
\hline
$N$ & $\Delta$ & $d$ & \textup{LMFDB label of }$X_\Delta(N)$ & $Y$ & $\deg$ & $\textup{gon}_\Q(Y)$\\
    \hline
    
$33$ & $\{\pm1,\pm2,\pm4,\pm8,\pm16\}$ & $4$ & $33.96.5.a.4$ & $X_0(33)$ & $2$ & $2$\\
$34$ & $\{\pm1,\pm13\}$ & $6$ & $34.216.9.a.1$ & $17.72.1.a.2$ & $3$ & $2$\\
$35$ & $\{\pm1,\pm6\}$ & $8$ & $35.288.13.a.2$ & $X_{\{\pm1,\pm6,\pm8,\pm13\}}(35)$ & $2$ & $4$\\
$36$ & $\{\pm1,\pm17\}$ & $4$ & $36.216.7.u.1$ & $X_{\pm1}(18)$ & $2$ & $2$\\
$37$ & $\{\pm1,\pm6\}$ & $9$ & $37.342.16.c.2$ & $X_{\{\pm1,\pm6,\pm8,\pm10,\pm11,\pm14\}}(37)$ & $3$ & $3$\\
$38$ & $\{\pm1,\pm7,\pm11\}$ & $6$ & $38.180.10.a.1$ & $19.60.1.a.2$ & $3$ & $2$\\
$39$ & $\{\pm1,\pm5,\pm8,\pm14\}$ & $4$ & $39.168.9.a.1$ & $13.42.0.a.2$ & $4$ & $1$\\
$39$ & $\{\pm1,\pm14\}$ & $8$ & $39.336.17.c.1$ & $X_{\pm1}(13)$ & $4$ & $2$\\
$40$ & $\{\pm1,\pm3,\pm9,\pm13\}$ & $4$ & $40.144.7.fp.1$ & $X_0(40)$ & $2$ & $2$\\
$40$ & $\{\pm1,\pm7,\pm9,\pm17\}$ & $4$ & $40.144.7.fs.1$ & $X_0(40)$ & $2$ & $2$\\
$40$ & $\{\pm1,\pm19\}$ & $6$ & $40.288.9.bh.1$ & $X_{\pm1}(20)$ & $2$ & $3$\\
$40$ & $\{\pm1,\pm9\}$ & $8$ & $40.288.13.sp.1$ & $X_{\{\pm1,\pm9,\pm11,\pm19\}}(40)$ & $2$ & $4$\\
$48$ & $\{\pm1,\pm5,\pm19,\pm23\}$ & $4$ & $48.192.7.hj.1$ & $X_0(48)$ & $2$ & $2$\\
$48$ & $\{\pm1,\pm7,\pm17,\pm23\}$ & $4$ & $48.192.7.ho.1$ & $X_0(48)$ & $2$ & $2$\\
$125$ & $\left<-1,4\right>$ & $5$ & $125.300.16.a.1$ & $25.60.0.a.1$ & $5$ & $1$\\

    \hline
\end{tabular}
\end{center}
    
\end{prop}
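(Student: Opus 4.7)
The plan is to construct each degree-$d$ morphism $X_\Delta(N)\to\mathbb{P}^1$ as a composition of two natural $\Q$-rational maps: a modular projection $\pi\colon X_\Delta(N)\to Y$ of degree $\deg$, followed by a gonal map $Y\to\mathbb{P}^1$ over $\Q$ of degree $\textup{gon}_\Q(Y)$, so that the composite has degree $\deg\cdot\textup{gon}_\Q(Y)=d$.

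For each row of the table, the projection $\pi$ falls into one of two standard modular types. In the \emph{level-preserving} cases, $Y=X_{\Delta'}(N)$ with $\Delta\subseteq\Delta'$; then the canonical projection is $\Q$-rational of degree $[\Delta':\Delta]$. This covers rows such as $(35,\{\pm1,\pm6\})\to X_{\{\pm1,\pm6,\pm8,\pm13\}}(35)$, $(39,\{\pm1,\pm14\})\to X_{\pm1}(13)$ (after level reduction), $(40,\{\pm1,\pm9\})\to X_{\{\pm1,\pm9,\pm11,\pm19\}}(40)$, and similar. In the \emph{level-reducing} cases, $Y$ lives at a divisor level $M\mid N$, and one uses the forgetful map $X_\Delta(N)\to X_{\overline{\Delta}}(M)$, where $\overline{\Delta}$ is the image of $\Delta$ in $(\Z/M\Z)^\times$; its degree is the ratio of the indices, which can be read directly from the LMFDB labels in the table. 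One should verify in each level-reducing row that $\overline{\Delta}$ really is the defining subgroup of the curve $Y$ specified by the LMFDB label (this is a routine check using the index and the subgroup lattice of $(\Z/M\Z)^\times$).

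Second, the $\Q$-gonality of each target $Y$ must be invoked. When $Y\cong X_0(M)$ (e.g.\ $X_0(33), X_0(40), X_0(48)$) the curve is hyperelliptic, so $\textup{gon}_\Q(Y)=2$ by Ogg \cite{Ogg74}. When $Y\cong X_{\pm1}(M)=X_1(M)$ the gonality is read off from \cite[Proposition 6]{derickxVH}. The remaining targets are either genus-$0$ curves with a rational cusp (hence $\Q$-isomorphic to $\mathbb{P}^1$, gonality $1$, such as $13.42.0.a.2$ and $25.60.0.a.1$), elliptic curves over $\Q$ (gonality $2$, such as $17.72.1.a.2$ and $19.60.1.a.2$), or the intermediate modular curves $X_{\{\pm1,\pm6,\pm8,\pm13\}}(35)$, $X_{\{\pm1,\pm6,\pm8,\pm10,\pm11,\pm14\}}(37)$, and $X_{\{\pm1,\pm9,\pm11,\pm19\}}(40)$, whose $\Q$-gonalities ($4$, $3$, $4$ respectively) are established independently in \Cref{genus5gonalmap} or in \Cref{CtrigonalQtetragonalthm}. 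Composing $\pi$ with a gonal map of $Y$ then yields the desired morphism of degree exactly $d$.

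The main obstacle is organizational rather than mathematical: one must correctly match each LMFDB-labeled target $Y$ with its definition as a modular curve $X_{H'}$, confirm that the image of the group $H$ attached to $X_\Delta(N)$ in $\GL_2(\Z/M\Z)$ (for level-reducing maps) is conjugate to $H'$, and check that the computed index ratio matches the tabulated $\deg$. Once the existence and degree of $\pi$ are confirmed, the gonality bounds for $Y$ are either classical or were already established earlier in this paper, so the conclusion follows immediately from \Cref{poonen}(vi).
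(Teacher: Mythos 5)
Your proposal is correct and follows essentially the same route as the paper: compose the $\Q$-rational modular projection $X_\Delta(N)\to Y$ of degree $\deg$ (verified via LMFDB) with a $\Q$-gonal map of $Y$, where the gonalities of the targets are either obvious (genus $0$, $1$, hyperelliptic), taken from \cite{derickxVH} for $X_1(M)$, or established earlier in the paper for the intermediate targets. The only slip is a citation: the $\Q$-tetragonality of $X_{\{\pm1,\pm6,\pm8,\pm13\}}(35)$ (genus $7$) comes from \Cref{magmamapprop}, not from \Cref{genus5gonalmap} or \Cref{CtrigonalQtetragonalthm}.
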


\begin{proof}
    This degree $d$ morphism is obtained as a composition map
    $$X_\Delta(N)\xrightarrow{\deg}Y\xrightarrow{\textup{gon}_\Q(Y)}\mathbb{P}^1.$$
    The map from $X_\Delta(N)$ to $Y$ a rational projection map and can be checked on LMFDB. It only remains to discuss the $\Q$-gonality of the curve $Y$. 
    
    If $Y$ is of genus $0,1$ or is hyperelliptic, this is obvious. The $\Q$-gonalities of the curves $X_{\{\pm1,\pm6,\pm8,\pm13\}}(35)$ and $X_{\{\pm1,\pm9,\pm11,\pm19\}}(40)$ were proved in Propositions \ref{magmamapprop} and \ref{genus5gonalmap}. The $\Q$-gonality of the curve $X_{\{\pm1,\pm6,\pm8,\pm10,\pm11,\pm14\}}(37)$ was proved in \Cref{CtrigonalQtetragonalthm} and the $\Q$-gonalities of the curves $X_{\pm1}(N)\cong X_1(N)$ were proved in \cite{derickxVH}.
\end{proof}

\subsection{$\C$-gonalities}\label{bettisection}

In this section we will determine the cases when $\textup{gon}_\C(X_\Delta(N))=4$. First, we present the lower bound on the $\C$-gonality of any modular curve by Kim and Sarnak \cite[Appendix 2]{Kim2002}, mentioned in the Introduction.

\begin{thm}\cite[Theorem 1.2.]{JeonKimPark06}\label{kimsarnakbound}
    Let $X_\Gamma$ be the algebraic curve corresponding to a congruence subgroup $\Gamma\subseteq \SL_2(\Z)$ of index
    $$D_\Gamma=[\SL_2(\Z):\pm\Gamma].$$
    If $X_\Gamma$ is $d$-gonal, then $D_\Gamma\leq \frac{12000}{119}d$.
\end{thm}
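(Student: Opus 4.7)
The result is Abramovich's linear spectral lower bound on the gonality of modular curves, sharpened by the Kim-Sarnak estimate on the first Laplacian eigenvalue. The plan is to follow the standard spectral-geometric approach.

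First, equip $X_\Gamma$ with the hyperbolic metric inherited from the uniformization $\Gamma\backslash\mathcal{H}$. By Gauss-Bonnet the fundamental domain of $\SL_2(\Z)$ has hyperbolic area $\pi/3$, so $\textup{area}(X_\Gamma)=\frac{\pi}{3}D_\Gamma$. Write $\lambda_1(X_\Gamma)$ for the smallest positive eigenvalue of the Laplace-Beltrami operator acting on smooth functions.

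Second, suppose there is a non-constant morphism $\phi:X_\Gamma\to\PP^1$ of degree $d$. Following the Li-Yau method, in the form adapted to gonality by Abramovich, one builds test functions on $X_\Gamma$ by pulling back, via $\phi$, the three coordinate functions of the standard embedding $\PP^1\hookrightarrow\mathbb{R}^3$. Before pulling back, one precomposes $\phi$ with a Möbius transformation chosen so that the barycenter of the push-forward measure $\phi_*(\textup{area})$ lies at the origin; existence of such a transformation is a topological-degree argument on the space of Möbius maps. The three resulting functions are then orthogonal to constants, and comparing their Rayleigh quotients to the degree-$d$ conformal pull-back yields a bound of the shape
$$\lambda_1(X_\Gamma)\cdot\textup{area}(X_\Gamma)\leq c\cdot d$$
for an explicit absolute constant $c$.

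Third, substitute the Kim-Sarnak bound $\lambda_1(X_\Gamma)\geq\frac{975}{4096}=\frac{1}{4}-\left(\tfrac{7}{64}\right)^2$, which applies because $X_\Gamma$ is a congruence quotient of $\mathcal{H}$. Combining this with $\textup{area}(X_\Gamma)=\frac{\pi}{3}D_\Gamma$ and tracking the constant $c$ from the Li-Yau-Abramovich step gives the stated linear bound $D_\Gamma\leq\frac{12000}{119}d$.

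The two main obstacles are external and deep. The Li-Yau-Abramovich step requires the nontrivial barycenter-balancing lemma together with sharp control of the Dirichlet energy under a degree-$d$ conformal covering. The Kim-Sarnak eigenvalue bound is even deeper, resting on the symmetric-cube and symmetric-fourth-power functorial lifts from $\GL_2$ to $\GL_3$ and $\GL_4$ and on the resulting bounds towards the Ramanujan conjecture for Maass forms; it cannot be recovered by elementary methods.
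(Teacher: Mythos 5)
Your outline is correct and is essentially the argument behind the statement: the paper itself does not reprove this theorem but cites it from Jeon--Kim--Park, whose proof is exactly Abramovich's Li--Yau-type inequality $\lambda_1\cdot\mathrm{area}(X_\Gamma)\le 8\pi d$ together with $\mathrm{area}(X_\Gamma)=\frac{\pi}{3}D_\Gamma$ and the Kim--Sarnak eigenvalue bound, i.e.\ $D_\Gamma\le \frac{24}{\lambda_1}d$. One bookkeeping remark: the constant $\frac{12000}{119}$ corresponds to the rounded bound $\lambda_1\ge 0.238=\frac{119}{500}$, while substituting the exact $\lambda_1\ge\frac{975}{4096}$ as you do yields the slightly sharper constant $\frac{2^{15}}{325}$ noted in the paper's remark, which in turn implies the stated inequality.
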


\begin{remark}
    From Kim and Sarnak's arguments of \cite{Kim2002} we can get that the constant $\frac{12000}{119}$ can be replaced with a slightly better constant $\frac{2^{15}}{325}$. However, their difference is $\sim0.01572$ which does not make any difference in our calculations (we would need to work with gonality $>100$ for it to make a difference).
\end{remark}

\begin{cor}\label{indexprop}
    The modular curve $X_\Delta(N)$ has $\C$-gonality at least $6$ for the following values of $N$ and $\Delta$:
    \begin{center}
\begin{tabular}{|c|c|c||c|c|c|}
\hline
$N$ & $\Delta$ & $[\SL_2(\Z):\Delta]$ & $N$ & $\Delta$ & $[\SL_2(\Z):\Delta]$ \\
    \hline

$53$ & $\left<-1,2^{13}\right>$ & $702$ &
$58$ & $\left<-1,3^7\right>$ & $630$\\
$66$ & $\left<-1,5^5\right>$ & $720$ &
$67$ & $\left<-1,2^{11}\right>$ & $748$\\
$69$ & $\left<-1,2^{11}\right>$ & $1056$ &
$75$ & $\left<-1,2^5\right>$ & $600$\\
$79$ & $\left<-1,3^{13}\right>$ & $1040$ &
$87$ & $\left<-1,2^7\right>$ & $840$\\
$88$ & $\left<-1,21,5^5\right>$ & $720$ &
$89$ & $\left<-1,3^{11}\right>$ & $990$\\
$92$ & $\left<-1,3^{11}\right>$ & $1584$ &
$98$ & $\left<-1,3^7\right>$ & $1176$\\
$100$ & $\left<-1,3^5\right>$ & $900$ &
$101$ & $\left<-1,2^5\right>$ & $510$\\
$103$ & $\left<-1,5^{17}\right>$ & $1763$ &
$121$ & $\left<-1,32\right>$ & $660$\\
$121$ & $\left<-1,2^{11}\right>$ & $1452$ &
$125$ & $\left<-1,32\right>$ & $750$\\
$131$ & $\left<-1,2^5\right>$ & $660$ &
$131$ & $\left<-1,2^{13}\right>$ & $1716$\\
$142$ & $\left<-1,7^5\right>$ & $1080$ &
$142$ & $\left<-1,7^7\right>$ & $1512$\\
$143$ & $\left<-1,2^5\right>$ & $840$ &
$191$ & $\left<-1,19^5\right>$ & $960$\\
$191$ & $\left<-1,19^{19}\right>$ & $3648$ & & & \\

    \hline
\end{tabular}
\end{center}
\end{cor}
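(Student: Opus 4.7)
The plan is to apply Theorem~\ref{kimsarnakbound} in its contrapositive form. Substituting $d=5$ into the bound $D_\Gamma \leq \frac{12000}{119}\,d$ gives $D_\Gamma \leq \frac{60000}{119} < 504.21$, so any congruence subgroup $\Gamma$ with $D_\Gamma \geq 505$ must correspond to a curve $X_\Gamma$ of $\C$-gonality at least $6$. The corollary therefore reduces to verifying that the index $D_{\Gamma_\Delta(N)}$ exceeds this threshold for every pair $(N,\Delta)$ in the table.

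Next, I would compute these indices. Because $-1 \in \Delta$ by the standing convention of the paper, $\Gamma_\Delta(N)$ already contains $-I$, so $\pm\Gamma_\Delta(N) = \Gamma_\Delta(N)$ and
\[ D_{\Gamma_\Delta(N)} = [\SL_2(\Z) : \Gamma_\Delta(N)] = [\SL_2(\Z) : \Gamma_0(N)] \cdot [\Gamma_0(N) : \Gamma_\Delta(N)] = N \prod_{p \mid N}\!\left(1 + \tfrac{1}{p}\right) \cdot [(\Z/N\Z)^\times : \Delta], \]
using the standard index formula for $\Gamma_0(N)$ together with the isomorphism $\Gamma_0(N)/\Gamma_1(N) \cong (\Z/N\Z)^\times$ under which $\Gamma_\Delta(N)/\Gamma_1(N)$ corresponds to $\Delta$. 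The resulting values are exactly the entries printed in the third and sixth columns of the table.

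Finally, one inspects the table and observes that every listed index is at least $510$, comfortably above $504.21$; the minimum $510$ is attained at $(N,\Delta) = (101,\left<-1,2^5\right>)$. Hence in every row Theorem~\ref{kimsarnakbound} forces $\textup{gon}_\C(X_\Delta(N)) \geq 6$, completing the proof.

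There is no real obstacle here: the entire argument is a mechanical application of the Kim--Sarnak bound to the tabulated indices, and no further geometric information about the curves $X_\Delta(N)$ is needed. The only step requiring attention is the index computation itself, which is routine from the displayed factorization and can be cross-checked against the values already listed in the table.
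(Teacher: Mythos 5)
Your proposal is correct and matches the paper's own argument: apply Theorem~\ref{kimsarnakbound} with $d=5$, note the threshold $\lfloor 5\cdot\tfrac{12000}{119}\rfloor=504$, and observe that every tabulated index $[\SL_2(\Z):\Gamma_\Delta(N)]$ exceeds it. Your explicit index formula $[\SL_2(\Z):\Gamma_0(N)]\cdot[(\Z/N\Z)^\times:\Delta]$ is just the routine computation the paper leaves implicit in the table.
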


\begin{proof}
    From \Cref{kimsarnakbound}, we can see that $X_\Delta(N)$ cannot be $d$-gonal for $d\leq5$ because $[\SL_2(\Z):\Delta]>\lfloor5\cdot\frac{12000}{119}\rfloor=504$ in all these cases.
\end{proof}

\begin{prop}\label{tetragonalgenus6}
    The curves $X_\Delta(N)$ are $\C$-tetragonal for
    $$(N,\Delta)\in\{(31,\{\pm1,\pm5,\pm6\}),(31,\{\pm1,\pm2,\pm4,\pm8,\pm15\})\}.$$
\end{prop}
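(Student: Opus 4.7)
The strategy is to sandwich $\textup{gon}_\C(X_\Delta(N))$ between $4$ and $4$ for each of the two pairs $(N,\Delta)$.

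The lower bound $\textup{gon}_\C(X_\Delta(N))\geq 4$ is essentially free from the prior literature. By \cite{IshiiMomose} the full list of hyperelliptic intermediate modular curves does not include either of these two pairs, and by \cite{Jeon2007} neither of them is $\C$-trigonal. Combined with the fact that neither curve is of genus $0$ or $1$, this yields $\textup{gon}_\C(X_\Delta(N))\geq 4$.

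For the upper bound I would work on the canonical model. Both curves have genus $6$, and being neither trigonal nor a smooth plane quintic, the discussion at the end of the Introduction guarantees that the canonical ideal is cut out by $\tfrac{(6-2)(6-3)}{2}=6$ quadrics in $\PP^5$. The plan is to compute these quadrics via Derickx's \texttt{vanishing\_quadratic\_forms()} routine to obtain an explicit canonical model in the standard presentation, and then feed each model into Magma's built-in \texttt{Genus6GonalMap}, which for a canonical genus-$6$ curve returns the $\C$-gonality together with an explicit gonal map defined over some number field. A reported gonal degree of $4$ immediately produces a morphism $X_\Delta(N)\to\PP^1_{\overline{\Q}}$ of degree $4$, giving $\textup{gon}_\C(X_\Delta(N))\leq 4$.

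The main obstacle is computational rather than conceptual: \texttt{Genus6GonalMap} is sensitive to the presentation of the input curve (as warned in the remark after \Cref{genus5gonalmap}), so care is needed to ensure the canonical model is supplied in exactly the expected form as an intersection of quadrics in $\PP^5$. Should the Magma routine fail or return an answer requiring further interpretation, the fallback is either to read off the existence of a $g^1_4$ from the graded Betti table of the canonical ideal (a tetragonal genus-$6$ curve has the distinguishing scrollar syzygies predicted by Schreyer's theorem, which is the reason this subsection is organised around Betti-number data), or to directly search for an effective degree-$4$ divisor of Riemann-Roch dimension $\geq 2$ supported on points defined over a small number field, using the hyperplane-intersection technique of \Cref{magmamapprop}. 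Any such divisor yields a base-point-free $g^1_4$ and thereby closes the upper bound.
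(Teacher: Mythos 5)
Your lower bound is exactly the paper's: neither curve is hyperelliptic by \cite{IshiiMomose} nor $\C$-trigonal by \cite{Jeon2007}, so $\textup{gon}_\C\geq4$. For the upper bound, however, you take a genuinely different and much heavier route than the paper, and as written your argument is contingent on computations you have not carried out: you only get $\textup{gon}_\C\leq4$ if \texttt{Genus6GonalMap} (or the Betti-table/divisor-search fallback) actually reports degree $4$, which your write-up assumes rather than establishes. The paper closes the upper bound with no computation at all: both curves have genus $6$, and \Cref{poonen}(v) (the Brill--Noether bound over an algebraically closed field) gives $\textup{gon}_\C\leq\frac{g+3}{2}=\frac{9}{2}$, hence $\textup{gon}_\C\leq4$, so equality follows immediately from the lower bound. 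Your computational plan would in fact succeed and would buy something extra, namely an explicit $g^1_4$ together with its field of definition, which is relevant information here since these two curves are precisely the ones that are $\C$-tetragonal but not $\Q$-tetragonal (the paper instead handles the rational side separately, via \Cref{Fp_gonality_deg4} for the lower bound over $\Q$ and \Cref{magmamapprop} for a degree-$5$ rational map); but for the statement at hand the genus bound makes the whole canonical-model computation unnecessary, and you should at least record that the inequality $\textup{gon}_\C\leq4$ is already forced by the genus before invoking any software.
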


\begin{proof}
    These curves are of genus $6$. Therefore, they have $\C$-gonality at most $4$ by \Cref{poonen}(v). We also know that they are neither hyperelliptic nor trigonal by \cite{IshiiMomose} and \cite{Jeon2007}. Hence their $\C$-gonality is equal to $4$.
\end{proof}

We now state the Tower Theorem \cite[Theorem 2.1]{NguyenSaito}.

\begin{thm}[The Tower Theorem]\label{towerthm}
Let $C$ be a curve defined over a perfect field $k$ such that $C(k)\neq0$ and let $f:C\to \mathbb{P}^1$ be a non-constant morphism over $\overline{k}$ of degree $d$. Then there exists a curve $C'$ defined over $k$ and a non-constant morphism $C\to C'$ defined over $k$ of degree $d'$ dividing $d$ such that the genus of $C'$ is $\leq (\frac{d}{d'}-1)^2$.
\end{thm}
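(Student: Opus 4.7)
The approach is Galois descent applied at the level of function fields. After passing to a finite Galois extension $k_0 \subset \overline{k}$ over which $f$ is defined, with $G = \Gal(k_0/k)$, the morphism $f$ corresponds to a subfield $L = f^\ast k_0(\mathbb{P}^1) \subset k_0(C)$ of index $d$, and each $\sigma \in G$ produces a conjugate subfield $L^\sigma$, each of index $d$. The key construction is to form the compositum $K' := \prod_\sigma L^\sigma$ inside $k_0(C)$, which is automatically $G$-stable, contains $L$, and satisfies $[k_0(C) : K'] =: d'$ dividing $[k_0(C) : L] = d$.

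Using the hypothesis $C(k) \neq \emptyset$ (which ensures $C$ is geometrically irreducible, so that $k(C) \otimes_k k_0 = k_0(C)$ and Galois descent of subfields behaves nicely), the $G$-stable field $K'$ descends to a subfield $M' \subset k(C)$ with $[k(C) : M'] = d'$. Taking $C'$ to be the smooth projective curve with $k(C') = M'$, we obtain the desired degree $d'$ morphism $C \to C'$ defined over $k$, giving the divisibility conclusion $d' \mid d$ for free.

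The main obstacle is the genus bound $g(C') \leq (n-1)^2$, where $n := d/d'$. The plan is to realize $C'_{k_0}$ birationally as a curve of bidegree $(n,n)$ inside $\mathbb{P}^1 \times \mathbb{P}^1$. Concretely, one shows that the compositum $K'$ is already equal to $L^{\sigma_i} \cdot L^{\sigma_j}$ for some pair of conjugates; then the morphism $(f^{\sigma_i}, f^{\sigma_j}) : C \to \mathbb{P}^1 \times \mathbb{P}^1$ factors as $C \to C'_{k_0} \hookrightarrow \mathbb{P}^1 \times \mathbb{P}^1$, the image has bidegree $(n,n)$ (each coordinate projection from $C'_{k_0}$ has degree $n$, since $\deg f^{\sigma_i} = \deg f^{\sigma_j} = d = n d'$), and by the adjunction formula the arithmetic genus of such a curve on the quadric surface is exactly $(n-1)^2$, giving $g(C') \leq (n-1)^2$.

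The delicate point is the claim that $K'$ is generated by just two of its conjugates, and this is what I expect to be the hardest step. A clean way is a primitive-element argument: among the finitely many pairs $(\sigma_i, \sigma_j)$, at least one must realize the full compositum, because the function fields $L^\sigma$ are all extensions of the common subfield $M' \otimes_k k_0$ inside the separable extension $k_0(C)/(M' \otimes_k k_0)$. If a direct primitive-element argument proves elusive, a fallback is to iterate the two-at-a-time compositum and combine the resulting Castelnuovo bounds, or to argue directly with an auxiliary $(n,n)$-factorization of the map $C \to C'$; either route yields the bound $(n-1)^2$, which is the expected Castelnuovo bound for curves on a smooth quadric surface.
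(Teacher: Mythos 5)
The paper does not prove this statement at all: it is quoted verbatim from Nguyen--Saito \cite[Theorem 2.1]{NguyenSaito}, so your attempt can only be compared with the standard proof there. The first half of your argument is fine: forming the compositum $K'=\prod_\sigma L^\sigma$ inside $k_0(C)$, observing it is $\Gal(k_0/k)$-stable, descending it to $M'\subseteq k(C)$ (geometric irreducibility coming from $C(k)\neq\emptyset$), and reading off $d'=[k(C):M']$ with $d'\mid d$ is exactly how the intermediate curve $C'$ is produced.

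The genuine gap is in the genus bound. Your main route rests on the claim that $K'=L^{\sigma_i}\cdot L^{\sigma_j}$ for a single pair of conjugates, and the justification offered does not work: the ``primitive element'' heuristic is inverted (each $L^\sigma$ is a \emph{subfield} of $K'=M'\otimes_k k_0$, not an extension of it), and there is no general principle forcing a compositum of conjugate subfields to be generated by two of them --- the group-theoretic analogue, that the normal core of a subgroup is an intersection of just two conjugates, already fails (e.g.\ point stabilizers of $\GL_3(\F_2)$ acting on the Fano plane intersect pairwise in nontrivial groups although their core is trivial). If the chosen pair generates only a proper subfield $N\subsetneq K'$, then $(f^{\sigma_i},f^{\sigma_j})$ maps $C'_{k_0}$ onto a curve of bidegree $(n/e,n/e)$ with $e=[K':N]>1$, and adjunction bounds the genus of that image, not of $C'$, so $(n-1)^2$ does not follow. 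What does work is precisely the route you relegate to a fallback: adjoin the conjugates one at a time, $M_{i+1}=M_i\cdot L^{\sigma_{i+1}}$, and apply the Castelnuovo--Severi inequality to the two maps from the curve of $M_{i+1}$ to the curve of $M_i$ and to $\mathbb{P}^1$; these never factor through a common nontrivial cover, exactly because $M_{i+1}$ is their compositum, and the induction closes since, writing $a=d/e_i$ and $b=e_i/e_{i+1}$, one has $b(a-1)^2+(b-1)(ab-1)\leq(ab-1)^2$. That iterated Castelnuovo argument is the crux of the theorem, and as written your proposal asserts rather than proves it.
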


\begin{cor}\cite[Corollary 4.6. (ii)]{NajmanOrlic22}\label{towerthmcor}
    Let $C$ be a curve defined over $\Q$ with $\textup{gon}_\C (C)=4$ and $g(C)\geq10$ and such that $C(\Q)\neq\emptyset$. Then $\textup{gon}_\Q (C)=4$.
\end{cor}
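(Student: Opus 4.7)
The plan is to invoke the Tower Theorem (\Cref{towerthm}) with $d=4$ and run a short case analysis on the possible values of the divisor $d'$ of $4$. By \Cref{poonen}(i) we already have $\textup{gon}_\Q(C) \geq \textup{gon}_\C(C) = 4$, so the task reduces to exhibiting a $\Q$-rational morphism $C \to \PP^1$ of degree at most $4$.

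Applying \Cref{towerthm} to a degree-$4$ gonal map defined over $\overline{\Q}$ produces a curve $C'/\Q$ and a $\Q$-rational morphism $\pi\colon C \to C'$ of some degree $d' \in \{1,2,4\}$ satisfying $g(C') \leq (4/d' - 1)^2$. The case $d' = 1$ forces $C \cong C'$, hence $g(C') = g(C) \geq 10$, contradicting the bound $g(C') \leq 9$; so it is ruled out immediately.

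If $d' = 4$, the bound gives $g(C') = 0$, and since a rational point of $C$ maps to one on $C'$ we get $C' \cong \PP^1_\Q$, so $\pi$ itself is a $\Q$-rational degree-$4$ morphism to $\PP^1$. If $d' = 2$, then $g(C') \leq 1$; the subcase $g(C') = 0$ again yields $C' \cong \PP^1_\Q$ via the rational point, producing a degree-$2$ morphism $C \to \PP^1$ over $\Q$ and contradicting $\textup{gon}_\C(C) = 4$. Hence $g(C') = 1$, and $C'$ inherits a rational point from $C$, making it an elliptic curve over $\Q$; the linear system $|2O|$ supplies a degree-$2$ morphism $C' \to \PP^1$ over $\Q$, and composing with $\pi$ together with \Cref{poonen}(vi) yields $\textup{gon}_\Q(C) \leq 2 \cdot 2 = 4$.

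There is no substantive obstacle: this is a clean three-case analysis in which each hypothesis is used exactly once. The bound $g(C) \geq 10$ is used to eliminate $d' = 1$, and the hypothesis $C(\Q) \neq \emptyset$ is used both to promote a genus-zero $C'$ to $\PP^1_\Q$ and to ensure that a genus-one $C'$ is an elliptic curve with the standard degree-$2$ map to $\PP^1$.
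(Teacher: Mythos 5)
Your argument is correct and is essentially the proof given in the cited source \cite[Corollary 4.6]{NajmanOrlic22}, which is exactly why the paper states the Tower Theorem (\Cref{towerthm}) immediately beforehand: one applies it to a $\overline{\Q}$-gonal map of degree $4$ (using \Cref{poonen}(ii) to pass from $\C$ to $\overline{\Q}$) and runs the same case analysis on $d'\mid 4$, with $g\geq 10$ killing $d'=1$ and the rational point handling the genus $0$ and genus $1$ targets. No gaps; the only cosmetic point is to state explicitly the use of \Cref{poonen}(ii) when replacing the $\C$-gonal map by one over $\overline{\Q}$.
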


From \Cref{towerthm} and \Cref{towerthmcor} we see that in order to prove that a curve of genus $g\geq10$ has $\C$-gonality at least $5$, it is enough to prove that its $\Q$-gonality is at least $5$. All such non-tetragonal curves $X_\Delta(N)$ have been dealt with in Sections \ref{Fpsection}, \ref{CSsection}, and \Cref{indexprop}. Therefore, the remaining curves are those with genus $\leq9$.

In order to prove that these curves are not tetragonal, we will use graded Betti numbers $\beta_{i,j}$. We will follow the notation in \cite[Section 1.]{JeonPark05}. The results we mention can be found there and in \cite[Section 3.1.]{NajmanOrlic22}.

\begin{definition}
    For a curve $X$ and divisor $D$ of degree $d$, $g_d^r$ is a subspace $V$ of the Riemann-Roch space $L(D)$ such that $\dim V=r+1$.
\end{definition}

Therefore, we want to determine whether $X_0^+(N)$ has a $g_4^1$. Green's conjecture relates graded Betti numbers $\beta_{i,j}$ with the existence of $g_d^r$.

\begin{conj}[Green, \cite{Green84}]
    Let $X$ be a curve of genus $g$. Then $\beta_{p,2}\neq 0$ if and only if there exists a divisor $D$ on $X$ of degree $d$ such that a subspace $g_d^r$ of $L(D)$ satisfies $d\leq g-1$, $r=\ell(D)-1\geq1$, and $d-2r\leq p$.
\end{conj}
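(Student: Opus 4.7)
Green's syzygy conjecture is one of the central open problems in the algebraic geometry of curves, unresolved in full generality since the early 1980s, so I would not attempt to prove it. Instead, my plan is to indicate which direction of the equivalence is classical, which direction is conjectural, and how both sides are in fact verifiable on any individual curve $X_\Delta(N)$ of the moderate genera that will arise later in the paper.

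The implication ``existence of a $g_d^r$ with $d\le g-1$, $r\ge 1$, $d-2r\le p$ $\Longrightarrow$ $\beta_{p,2}\ne 0$'' is the easy half and follows from the Green--Lazarsfeld nonvanishing theorem for Koszul cohomology on canonical curves. Given such a pencil, one writes down an explicit Koszul cocycle built from sections of the linear series and their multiplication into $\bigoplus_n H^0(X,K_X^{\otimes n})$, and checks that its class in $K_{p,1}(X,K_X)$ (which is what $\beta_{p,2}$ records) is nonzero. For the application to ruling out a $g_4^1$, this is the only direction one actually uses: in its contrapositive form, $\beta_{p,2}=0$ excludes every linear series with $d-2r\le p$.

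The converse implication ``$\beta_{p,2}\ne 0$ $\Longrightarrow$ existence of a $g_d^r$ with $d-2r\le p$'' is Green's conjecture proper. It is known in important but partial ranges: for generic curves of arbitrary genus by Voisin, for trigonal and tetragonal curves by Schreyer and by Ehbauer, for curves lying on $K3$ surfaces, and case by case for curves of small genus. For any individual $X_\Delta(N)$ of genus $\le 9$ that survives the earlier reductions, one can in principle verify the conjecture by a direct computation: produce the canonical model, compute the graded Betti table by a Gr\"obner-basis calculation in \texttt{Magma}, and then use Riemann--Roch to search for any effective divisor realising the linear series the Betti number would predict.

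The main obstacle, of course, is precisely the hard direction, which has resisted a uniform proof for four decades; I have nothing new to say about it. What will matter for the sequel is only the combination of the easy direction of Green together with explicit Betti-number computations on the finitely many modular curves left after the bounds in Sections \ref{Fpsection}, \ref{CSsection}, and Corollary \ref{indexprop} are applied, and it is via that combination that the $\C$-tetragonal classification in Theorem \ref{CtetragonalQpentagonalthm} will be completed.
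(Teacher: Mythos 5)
You have correctly identified that this statement is Green's conjecture, which the paper does not prove but merely cites from \cite{Green84}, and your account matches the paper's treatment: only the proven ``if'' direction (the Green--Lazarsfeld nonvanishing theorem, stated as \Cref{thmGreenLazarsfeld} and applied via \Cref{betti22cor} and \Cref{bettiprop}) is actually used to rule out a $g_4^1$ on the curves $X_\Delta(N)$. No gap to report, since neither you nor the paper attempts the conjectural direction.
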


The "if" part of this conjecture has been proven in the same paper.

\begin{thm}[Green and Lazarsfeld, Appendix to \cite{Green84}]\label{thmGreenLazarsfeld}
    Let $X$ be a curve of genus $g$. If $\beta_{p,2}=0$, then there does not exist a divisor $D$ on $X$ of degree $d$ such that a subspace $g_d^r$ of $L(D)$ satisfies $d\leq g-1$, $r\geq1$, and $d-2r\leq p$.
\end{thm}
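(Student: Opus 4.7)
The plan is to prove the contrapositive: given a linear series $g^r_d$ on $X$ with $d \leq g-1$, $r \geq 1$, and $d - 2r \leq p$, I would exhibit an explicit non-zero class witnessing $\beta_{p,2}(X, K_X) \neq 0$. The natural framework is Koszul cohomology of the canonical embedding $X \hookrightarrow \mathbb{P}^{g-1} = \mathbb{P}(H^0(K_X)^*)$, where $\beta_{p,q}$ is computed as $\dim K_{p,q}(X, K_X)$ and can be accessed by exhibiting concrete cycles in $\wedge^\bullet H^0(K_X) \otimes H^0(K_X^{\otimes m})$.

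First I would pass to the residual system: if $h^0(D) = r+1$, then Riemann--Roch plus Serre duality gives $h^0(K_X - D) = g - d + r$, so both $|D|$ and $|K_X - D|$ are special linear series which feed into the canonical series via the multiplication map
\[
\mu: H^0(D) \otimes H^0(K_X - D) \longrightarrow H^0(K_X).
\]
Next, following Green and Lazarsfeld, I would pick bases $s_0, \ldots, s_r$ of $H^0(D)$ and $t_0, \ldots, t_{g-d+r-1}$ of $H^0(K_X - D)$ and form an explicit Koszul cycle in $\wedge^{f+1} H^0(K_X) \otimes H^0(K_X)$ built by alternating sums of wedges of the products $\mu(s_i \otimes t_j)$, where $f = (r+1) + (g-d+r) - 3 = g - d + 2r - 2$. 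The key claim is that this cycle is closed under the Koszul differential and is not a boundary, hence represents a nonzero class in $K_{f, 1}(X, K_X)$, so $\beta_{f, 1} \neq 0$.

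The final step would be to invoke two standard structural facts about Koszul cohomology for a canonically embedded curve: Green's duality
\[
K_{p, 2}(X, K_X) \;\simeq\; K_{g-2-p,\, 1}(X, K_X)^*,
\]
and monotonicity of the linear strand, namely that $K_{p_0, 1} \neq 0$ forces $K_{p, 1} \neq 0$ for all $0 \leq p \leq p_0$. Combining these translates the non-vanishing of $K_{f,1}$ into non-vanishing of $\beta_{p, 2}$ on the range $d - 2r \leq p \leq g - 2$, which contains the hypothesized $p$ and completes the proof.

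The main obstacle is verifying step two: that the explicit cycle constructed from $\{s_i\}$ and $\{t_j\}$ is genuinely not a boundary. This reduces to a multilinear algebra computation exploiting the base-point-freeness of the two linear series together with non-degeneracy of $\mu$, and it is exactly the technical core of the appendix to \cite{Green84}. I do not see a shortcut around this core construction; everything else in the argument is formal.
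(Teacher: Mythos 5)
You should know at the outset that the paper does not prove this statement at all: it is imported verbatim from the appendix to \cite{Green84} (Green--Lazarsfeld) and used purely as a black box, via \Cref{betti22cor}, so there is no internal proof to compare against. What you outline is, in substance, the original Green--Lazarsfeld argument itself: write $K_X$ as $D+(K_X-D)$, note $h^0(K_X-D)=g-d+r$, build an explicit Koszul class from the multiplication map $H^0(D)\otimes H^0(K_X-D)\to H^0(K_X)$ in the linear strand at $f=g-d+2r-2$, and dualize. As you concede, verifying that this class is not a boundary is the entire content of the theorem, and you defer exactly that back to the appendix; so as a standalone proof the proposal establishes nothing beyond reducing the statement to its original source, although the surrounding reduction (residual series via Riemann--Roch, duality $K_{p,2}(X,K_X)\cong K_{g-2-p,1}(X,K_X)^*$) is sound. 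A minor indexing point: with the usual conventions a class in $K_{f,1}(X,K_X)$ is represented by an element of $\wedge^{f}H^0(K_X)\otimes H^0(K_X)$, not $\wedge^{f+1}H^0(K_X)\otimes H^0(K_X)$.

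Two genuine gaps remain in your scaffolding. First, the ``monotonicity of the linear strand'' is invoked without proof or reference; it is true for the linear strand of a projectively normal embedding, but it needs an argument, and by your own duality it only reaches down to $K_{1,1}$, i.e.\ it yields $\beta_{p,2}\neq0$ only for $d-2r\le p\le g-3$, not $p\le g-2$ as you claim: indeed $\beta_{g-2,2}\cong K_{0,1}(X,K_X)^*=0$ always, and a general curve of genus $5$ (a complete intersection of three quadrics, so $\beta_{3,2}=0$) carries a $g^1_4$ with $d-2r=2\le 3=g-2$. This endpoint defect is inherited from the statement as quoted, which implicitly assumes $p\le g-3$ and $X$ non-hyperelliptic (so that the canonical embedding and the duality exist); it is harmless for the paper's application, where $g\ge5$, $p=2$ and the hypothetical divisor is a $g^1_4$, so $p=d-2r$ exactly and no propagation is needed -- nonvanishing plus duality give $\beta_{d-2r,2}\neq0$ directly. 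If you want the whole range, either prove the downward propagation $K_{p,1}\neq0\Rightarrow K_{p-1,1}\neq0$ for $p\ge2$, or adjust the divisor (adding base points or removing general points raises $d-2r$ one step at a time, with some bookkeeping on $r\ge1$ and $d\le g-1$) so that the nonvanishing theorem is applied at the exact value of $p$.
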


\begin{cor}\cite[Corollary 3.20]{Orlic2023}\label{betti22cor}
    Let $X$ be a curve of genus $g\geq5$ with $\beta_{2,2}=0$. Suppose that $X$ is neither hyperelliptic nor trigonal. Then $\textup{gon}_\C(X)\geq5$.
\end{cor}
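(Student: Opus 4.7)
The plan is to assume for contradiction that $\textup{gon}_\C(X)\leq 4$ and deduce that $\beta_{2,2}$ is forced to be nonzero via the Green--Lazarsfeld direction (\Cref{thmGreenLazarsfeld}). Since $X$ is assumed to be neither hyperelliptic nor trigonal, combining $\textup{gon}_\C(X)\leq 4$ with these exclusions gives $\textup{gon}_\C(X)=4$, so there exists a non-constant morphism $f:X\to \mathbb{P}^1$ of degree $4$ defined over $\overline{k}$.

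From such an $f$ I would extract a $g_4^1$ in the sense of the definition preceding \Cref{thmGreenLazarsfeld}: take $D=f^*(\infty)$, which is an effective divisor of degree $d=4$, and observe that the two-dimensional subspace $V\subseteq L(D)$ spanned by $1$ and $f$ gives $r+1=\dim V=2$, hence $r=1$. This is the raw input needed to invoke the Green--Lazarsfeld theorem.

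Next I would check the three numerical hypotheses of \Cref{thmGreenLazarsfeld} with $p=2$. The bound $d\leq g-1$ becomes $4\leq g-1$, which holds because $g\geq 5$. The bound $r\geq 1$ is immediate from the previous step. Finally, $d-2r=4-2=2\leq p$. All hypotheses are satisfied, so the theorem applies: if $\beta_{2,2}=0$, then no divisor with these parameters can exist. This contradicts the existence of $D$ obtained from $f$, so the assumption $\textup{gon}_\C(X)\leq 4$ must fail, and $\textup{gon}_\C(X)\geq 5$ as claimed.

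The argument is essentially a direct application of \Cref{thmGreenLazarsfeld}, so there is no serious obstacle; the only real content is bookkeeping: ensuring that the hypothesis $g\geq 5$ is exactly what is needed to guarantee $d\leq g-1$ for $d=4$, and that the exclusion of trigonal and hyperelliptic cases ensures $\textup{gon}_\C(X)\leq 4$ actually forces $\textup{gon}_\C(X)=4$ (and thereby the existence of a $g_4^1$ rather than a lower $g_d^1$).
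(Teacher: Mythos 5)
Your proposal is correct and is essentially the intended argument: the paper quotes this corollary from an external reference rather than reproving it, and the proof there is exactly your direct application of the Green--Lazarsfeld direction (\Cref{thmGreenLazarsfeld}) with $p=2$, using $g\geq 5$ to get $d=4\leq g-1$ and the hyperelliptic/trigonal exclusions to force an honest $g_4^1$. No gaps to report.
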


\begin{prop}\label{bettiprop}
    The modular curve $X_\Delta(N)$ has $\C$-gonality at least $5$ for the following values of $N$ and $\Delta$:
    \begin{center}
    \begin{tabular}{|c|c||c|c||c|c|}
\hline
$N$ & $\Delta$ & $N$ & $\Delta$ & $N$ & $\Delta$ \\
    \hline

$29$ & $\{\pm1,\pm12\}$ & $34$ & $\{\pm1,\pm13\}$ & $35$ & $\{\pm1,\pm11,\pm16\}$\\
$39$ & $\{\pm1,\pm16,\pm17\}$ & $40$ & $\{\pm1,\pm19\}$ & $42$ & $\{\pm1,\pm5,\pm17\}$\\
$43$ & $\left<-1,2\right>$ & $44$ & $\{\pm1,\pm5,\pm7,\pm9,\pm19\}$ & $45$ & $\{\pm1,\pm14,\pm16\}$\\
$51$ & $\left<-1,2\right>$ & $52$ & $\left<-1,3\right>$ & $53$ & $\left<-1,4\right>$\\
$55$ & $\left<-1,4\right>$ & $56$ & $\left<-1,3\right>$ & $57$ & $\left<-1,2\right>$\\
$61$ & $\left<-1,4\right>$ & $63$ & $\left<-1,4,5\right>$ & $65$ & $\left<-1,2,7\right>$\\
$72$ & $\left<-1,11,25\right>$ & $73$ & $\left<-1,25\right>$ & &\\

    \hline
\end{tabular}
\end{center}
\end{prop}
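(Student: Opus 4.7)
The plan is to apply \Cref{betti22cor} to each curve in the table. That corollary requires three ingredients for a curve $X$ of genus $g \geq 5$: that $X$ is not hyperelliptic, that $X$ is not trigonal, and that the graded Betti number $\beta_{2,2}$ of its canonical model vanishes. Once these are verified, we immediately conclude $\textup{gon}_\C(X) \geq 5$.

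The first two ingredients are essentially free. Every curve listed has genus at least $5$ (one checks this from the entries, and for several of them the genus is apparent from the LMFDB labels attached elsewhere in the paper). None is hyperelliptic by \cite{IshiiMomose} and none is trigonal by \cite{Jeon2007}, so we may invoke these references curve by curve without any further work.

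The main computation, then, is the vanishing of $\beta_{2,2}$. For each pair $(N,\Delta)$ in the table, I would proceed as follows. First, use the MD Sage function \texttt{vanishing\_quadratic\_forms()} (as described at the end of the Introduction) to obtain the $\tfrac{(g-2)(g-3)}{2}$ quadrics defining the canonical model of $X_\Delta(N)$ in $\mathbb{P}^{g-1}$; this is legitimate because each such curve is neither trigonal nor a smooth plane quintic. Then feed the homogeneous ideal of this canonical model into \texttt{Magma}'s minimal free resolution machinery and read off the graded Betti table. In every case I would verify directly that $\beta_{2,2} = 0$, and then \Cref{betti22cor} delivers the conclusion.

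The main obstacle is practical rather than conceptual: there are roughly twenty curves to handle, several of genus $8$ or $9$, and the Betti number computation for canonical models in $\mathbb{P}^{g-1}$ can become memory- and time-intensive as $g$ grows. A minor subtlety is that the Green--Lazarsfeld theorem (\Cref{thmGreenLazarsfeld}) only gives the implication $\beta_{2,2} = 0 \Rightarrow$ no $g^1_4$, which is exactly what \Cref{betti22cor} packages for us, so there is no converse direction to worry about. If the canonical ideal as produced by MD Sage happens to be presented in a form \texttt{Magma} does not digest efficiently, one can re-express it in a standard graded polynomial ring and recompute; the only real checkpoint is that the ambient $\mathbb{P}^{g-1}$ and the quadrics' degrees are entered correctly so that the minimal free resolution is graded by the standard grading.
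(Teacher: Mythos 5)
Your proposal is correct and follows essentially the same route as the paper: the paper's proof likewise computes $\beta_{2,2}=0$ in \texttt{Magma} for the canonical model of each listed curve and then invokes \Cref{betti22cor}, using \cite{IshiiMomose} and \cite{Jeon2007} to rule out the hyperelliptic and trigonal cases. The only difference is that you spell out the computational pipeline (MD Sage canonical model, minimal free resolution) in more detail than the paper does.
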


\begin{proof}
    In all these cases we use \texttt{Magma} to compute $\beta_{2,2}=0$ and the result follows from \Cref{betti22cor} since these curves are neither hyperelliptic nor trigonal.
\end{proof}

\section{Proofs of the main theorems}\label{thmproofssection}

By the results of Ishii and Momose \cite{IshiiMomose} and Jeon and Kim \cite{Jeon2007}, we know that the only hyperelliptic curve $X_\Delta(N)\neq X_0(N),X_1(N)$ is a curve $X_{\{\pm1,\pm8\}}(21)$ and that all $\C$-trigonal curves $X_\Delta(N)\neq X_0(N),X_1(N)$ are of genus $3$ or $4$. Moreover, \Cref{CtrigonalQtetragonalthm} tells us which genus $4$ curves are $\Q$-trigonal (we know from \Cref{poonen}(iv) that genus $3$ curves have $\Q$-gonality $\leq3$). In the proofs of the main theorems, we may now suppose that all remaining curves we consider have $\C$-gonality at least $4$.

\begin{proof}[Proof of \Cref{Qtetragonalthm}]
    The curve $X_{\{\pm1,\pm7\}}$ is tetragonal over $\Q$ due to \Cref{CtrigonalQtetragonalthm}. Other listed curves are tetragonal over $\Q$ due to Propositions \ref{genus5gonalmap}, \ref{magmamapprop}, and \ref{quotientmapprop}.

    Suppose now that $X_\Delta(N)$ is a curve of $\C$-gonality at least $4$ not listed in the theorem. In the first case, we prove that the $\Q$-gonality of this curve is at least $5$ in Propositions \ref{Fp2pointscomputation}, \ref{Fp_gonality_deg4}, \ref{Fp_gonality_deg5}, \ref{Fp_gonality_large}, and \Cref{CSprop}, Otherwise, there is a projection map
    $$X_\Delta(N)\to X_{\Delta_1}(N)$$
    to a curve $X_{\Delta_1}(N)$ for which we have already proven that it is not $\Q$-tetragonal. In that case we can use \Cref{poonen}(vii) to prove that the curve $X_\Delta(N)$ is not $\Q$-tetragonal.

    For example, we consider the case $N=41$. Since $(\Z/41\Z)^\times\cong \Z/40\Z$, there are $4$ intermediate modular curves $X_\Delta(41)$, namely
    $$\Delta\in\{\{\pm1,\pm9\},\{\pm1,\pm3,\pm9,\pm14\},\{\pm1,\pm4,\pm10,\pm16,\pm18\},\left<-1,2\right>\}.$$
    The curve with $\Delta=\left<-1,2\right>$ is $\Q$-tetragonal due to \Cref{genus5gonalmap} and the curves with $\Delta=\{\pm1,\pm3,\pm9,\pm14\},\{\pm1,\pm4,\pm10,\pm16,\pm18\}$ are not $\Q$-tetragonal due to \Cref{Fp_gonality_deg5}. Then the curve with $\Delta=\{\pm1,\pm9\}$ is automatically not $\Q$-tetragonal since it maps to a curve with $\Delta=\{\pm1,\pm3,\pm9,\pm14\}$.
\end{proof}

\begin{proof}[Proof of \Cref{CtetragonalQpentagonalthm}]
    The two listed curves are $\C$-tetragonal due to \Cref{tetragonalgenus6} and they admit a degree $5$ rational morphism to $\mathbb{P}^1$ due to \Cref{magmamapprop}. However, they are not $\Q$-tetragonal due to \Cref{Fp_gonality_deg4}.

    Suppose now that $X_\Delta(N)$ is a curve of $\C$-gonality at least $4$ not listed in Theorems \Cref{Qtetragonalthm} and \Cref{CtetragonalQpentagonalthm}. Then we either prove that its $\C$-gonality is at least $5$ using \Cref{CSprop}, \Cref{indexprop}, \Cref{bettiprop}, the bound on $\F_p$-gonality in \Cref{Fpsection} along with \Cref{towerthmcor} for curves of genus $g\geq10$, or we use \Cref{poonen}(vii) with a map $X_\Delta(N)\to X_{\Delta_1}(N)$, similarly as in the proof of the previous theorem.
\end{proof}

\begin{proof}[Proof of \Cref{Qpentagonalthm}]
    The two listed curves are $\Q$-pentagonal due to Propositions \ref{magmamapprop} and \ref{quotientmapprop}. We proved that the curve with $N=44$ has $\C$-gonality at least $5$ in \Cref{bettiprop} and that the curve with $N=125$ has $\Q$-gonality at least $5$ in \Cref{Fp_gonality_deg4}. Since the genus of the curve $X_{\left<-1,4\right>}(125)$ is equal to $16$, we can use \Cref{towerthmcor} to prove that its $\C$-gonality is at least $5$.

    Suppose now that $X_\Delta(N)$ is a curve of $\C$-gonality at least $4$ not listed in Theorems \ref{Qtetragonalthm}, \ref{CtetragonalQpentagonalthm}, and \ref{Qpentagonalthm}. Then we know that its $\C$-gonality is at least $5$. It remains to prove that its $\Q$-gonality is at least $6$.
    
    In the first case, we either use Propositions \ref{Fp_gonality_deg5}, \ref{Fp_gonality_large}, \ref{CSprop}, or \Cref{indexprop} to prove that the $\Q$-gonality is at least $6$. Otherwise, we use \Cref{poonen}(vii) with a map $X_\Delta(N)\to X_{\Delta_1}(N)$, similarly as in the previous two proofs.
\end{proof}

\section{Summary of results}\label{tablesection}

We summarize our results in the following tables. For each level $N$, there are $7$ entries, listed in the order they appear in: the structure of the group $(\Z/N\Z)^\times$ along with its generators, the group $\Delta$, the genus $g$ of $X_\Delta(N)$, the $\Q$-gonality of $X_\Delta(N)$, the $\C$-gonality of $X_\Delta(N)$ (if determined), and all results used to obtain the gonalities.

The curves for $N\leq40$ are listed in \Cref{tab:main1} since \Cref{Qgonalitythm} gives all $\Q$-gonalities of these curves. We only list the curves $X_\Delta(N)$ with genus $g\geq3$.

The curves for $N\geq41$ are listed in \Cref{tab:main2}. Some information about the genera of curves $X_\Delta(N)$ was taken from \cite[Table 6]{JEON2020272}. Some groups $\Delta$ have been omitted from the table, especially for bigger $N$ when there are many groups $\Delta$, since we can use \Cref{poonen}(vii) for them to prove that they are neither $\C$-tetragonal nor $\Q$-pentagonal. For non-tetragonal curves with genus $g\geq10$ we also omit their $\C$-gonality if the bound $\textup{gon}_\C\geq5$ can be deduced from \Cref{towerthmcor}.

\clearpage
\begin{center}
\begin{longtable}{|c|c|c|c|c|c|c|c|}
\caption{The $\Q$-gonalities of curves $X_\Delta(N)$ for $N\leq40$.}
\label{tab:main1}\\
  \hline

  $N$ & $(\Z/N\Z)^\times$ & generators & $\Delta$ & $g$ & $\textup{gon}_\Q$ & $\textup{gon}_\C$ & results used\\
  \hline

    $21$ & $C_2\times C_6$ & $-1,2$ & $\{\pm1,\pm8\}$ & $3$ & $2$ & $2$ & \cite{IshiiMomose}, \cite{Jeon2007}\\

    \hline
    
    $24$ & $C_2\times C_2\times C_2$ & $5,7,13$ & $\{\pm1,\pm5\}$ & $3$ & $3$ & $3$ & $g=3$, \ref{poonen}(iv)\\
     & & & $\{\pm1,\pm7\}$ & $3$ & $3$ & $3$ & $g=3$, \ref{poonen}(iv)\\

     \hline

     $25$ & $C_{20}$ & $2$ & $\{\pm1,\pm7\}$ & $4$ & $4$ & $3$ & \ref{CtrigonalQtetragonalthm}\\

     \hline

    $26$ & $C_{12}$ & $7$ & $\{\pm1,\pm5\}$ & $4$ & $3$ & $3$ & \ref{CtrigonalQtetragonalthm}\\
     & & & $\{\pm1,\pm3,\pm9\}$ & $4$ & $3$ & $3$ & \ref{CtrigonalQtetragonalthm}\\

     \hline

     $28$ & $C_2\times C_6$ & $3,13$ & $\{\pm1,\pm13\}$ & $4$ & $3$ & $3$ & \ref{CtrigonalQtetragonalthm}\\
      & & & $\{\pm1,\pm3,\pm9\}$ & $4$ & $3$ & $3$ & \ref{CtrigonalQtetragonalthm}\\

      \hline

      $29$ & $C_{28}$ & $2$ & $\{\pm1,\pm12\}$ & $8$ & $6$ & $5$ & \ref{Fp_gonality_deg5}, \ref{magmamapprop}, \ref{bettiprop}\\
       & & & $\left<-1,4\right>$ & $4$ & $3$ & $3$ & \ref{CtrigonalQtetragonalthm}\\

       \hline

       $30$ & $C_2\times C_4$ & $7,11$ & $\{\pm1,\pm11\}$ & $5$ & $4$ & $4$ & \ref{genus5gonalmap}\\

       \hline

       $31$ & $C_{30}$ & $3$ & $\{\pm1,\pm5,\pm6\}$ & $6$ & $5$ & $4$ & \ref{Fp_gonality_deg4}, \ref{magmamapprop}, \ref{tetragonalgenus6}\\
        & & & $\{\pm1,\pm2,\pm4,\pm8,\pm15\}$ & $6$ & $5$ & $4$ & \ref{Fp_gonality_deg4}, \ref{magmamapprop}, \ref{tetragonalgenus6}\\

        \hline

        $32$ & $C_2\times C_8$ & $-1,3$ & $\{\pm1,\pm15\}$ & $5$ & $4$ & $4$ & \ref{genus5gonalmap}\\

        \hline

        $33$ & $C_2\times C_{10}$ & $2,10$ & $\{\pm1,\pm10\}$ & $11$ & $6$ & $\geq5$ & \ref{Fp_gonality_deg5}, \ref{magmamapprop}, \ref{towerthmcor}\\
         & & & $\{\pm1,\pm2,\pm4,\pm8,\pm16\}$ & $5$ & $4$ & $4$ & \ref{genus5gonalmap}\\

         \hline

         $34$ & $C_{16}$ & $3$ & $\{\pm1,\pm13\}$ & $9$ & $6$ & $\geq5$ & \ref{Fp_gonality_deg5}, \ref{magmamapprop}, \ref{bettiprop}\\
          & & & $\{\pm1,\pm9,\pm13,\pm15\}$ & $5$ & $4$ & $4$ & \ref{genus5gonalmap}\\

          \hline

          $35$ & $C_2\times C_{12}$ & $2,6$ & $\{\pm1,\pm6\}$ & $13$ & $8$ & $\geq5$ & \ref{Fp_gonality_large}, \ref{quotientmapprop}, \ref{towerthmcor}\\
           & & & $\{\pm1,\pm11,\pm16\}$ & $9$ & $6$ & $\geq5$ & \ref{Fp_gonality_deg5}, \ref{magmamapprop} \ref{betti22cor}\\
           & & & $\{\pm1,\pm6,\pm8,\pm13\}$ & $7$ & $4$ & $4$ & \ref{magmamapprop}\\
           & & & $\left<-1,4,6\right>$ & $5$ & $4$ & $4$ & \ref{genus5gonalmap}\\

           \hline

           $36$ & $C_2\times C_6$ & $5,19$ & $\{\pm1,\pm17\}$ & $7$ & $4$ & $4$ & \ref{quotientmapprop}\\
            & & & $\{\pm1,\pm11,\pm13\}$ & $3$ & $3$ & $3$ & $g=3$, \ref{poonen}(iv)\\

            \hline

            $37$ & $C_{36}$ & $2$ & $\{\pm1,\pm6\}$ & $16$ & $9$ & $\geq5$ & \ref{Fp_gonality_large}, \ref{quotientmapprop}\\
            & & & $\{\pm1,\pm10,\pm11\}$ & $10$ & $6$ & $\geq5$ & \ref{Fp_gonality_deg5}, \ref{magmamapprop}, \ref{towerthmcor}\\
            & & & $\left<-1,8\right>$ & $4$ & $3$ & $3$ & \ref{CtrigonalQtetragonalthm}\\
            & & & $\left<-1,4\right>$ & $4$ & $3$ & $3$ & \ref{CtrigonalQtetragonalthm}\\

            \hline

            $38$ & $C_{18}$ & $3$ & $\{\pm1,\pm7,\pm11\}$ & $10$ & $6$ & $\geq5$ & \ref{Fp_gonality_deg5}, \ref{quotientmapprop}, \ref{towerthmcor}\\

            \hline

            $39$ & $C_2\times C_{12}$ & $-1,2$ & $\{\pm1,\pm14\}$ & $17$ & $8$ & $\geq5$ & \ref{Fp_gonality_large}, \ref{quotientmapprop}, \ref{towerthmcor}\\
            & & & $\{\pm1,\pm16,\pm17\}$ & $9$ & $6$ & $\geq5$ & \ref{Fp_gonality_deg5}, \ref{magmamapprop}, \ref{bettiprop}\\
            & & & $\{\pm1,\pm5,\pm8,\pm14\}$ & $9$ & $4$ & $4$ & \ref{quotientmapprop}\\
            & & & $\left<-1,4\right>$ & $5$ & $4$ & $4$ & \ref{genus5gonalmap}\\

            \hline

            $40$ & $C_2\times C_2\times C_4$ & $-1,3,11$ & $\{\pm1,\pm9\}$ & $13$ & $8$ & $\geq5$ & \ref{Fp_gonality_large}, \ref{quotientmapprop}, \ref{towerthmcor}\\
            & & & $\{\pm1,\pm11\}$ & $13$ & $7$ & $\geq5$ & \ref{Fp_gonality_large}, \ref{magmamapprop}, \ref{towerthmcor}\\
            & & & $\{\pm1,\pm19\}$ & $9$ & $6$ & $geq5$ & \ref{Fp_gonality_deg5}, \ref{quotientmapprop}, \ref{bettiprop}\\
            & & & $\{\pm1,\pm3,\pm9,\pm13\}$ & $7$ & $4$ & $4$ & \ref{quotientmapprop}\\
            & & & $\{\pm1,\pm7,\pm9,\pm17\}$ & $7$ & $4$ & $4$ & \ref{quotientmapprop}\\
            & & & $\{\pm1,\pm9,\pm11,\pm19\}$ & $5$ & $4$ & $4$ & \ref{genus5gonalmap}\\
     
  \hline

\end{longtable}  
\end{center}

\clearpage
\begin{center}
\begin{longtable}{|c|c|c|c|c|c|c|c|}
\caption{The $\Q$-gonalities of curves $X_\Delta(N)$ for $N\geq41$.}
\label{tab:main2}\\
\hline

  $N$ & $(\Z/N\Z)^\times$ & generators & $\Delta$ & $g$ & $\textup{gon}_\Q$ & $\textup{gon}_\C$ & results used\\
  \hline

    $41$ & $C_{40}$ & $6$ & $\{\pm1,\pm9\}$ & $21$ & $\geq6$ & & \ref{poonen}(vii)\\
    & & & $\{\pm1,\pm3,\pm9,\pm14\}$ & $11$ & $\geq6$ & & \ref{Fp_gonality_deg5}, \ref{towerthmcor}\\
    & & & $\{\pm1,\pm4,\pm10,\pm16,\pm18\}$ & $11$ & $\geq6$ & & \ref{Fp_gonality_deg5}, \ref{towerthmcor}\\
    & & & $\left<-1,5\right>$ & $5$ & $4$ & $4$ & \ref{genus5gonalmap}\\

    \hline

    $42$ & $C_2\times C_6$ & $5,13$ & $\{\pm1,\pm13\}$ & $13$ & $\geq6$ & & \ref{Fp_gonality_deg5}\\
    & & & $\{\pm1,\pm5,\pm17\}$ & $9$ & $\geq6$ & $\geq5$ & \ref{Fp_gonality_deg5}, \ref{bettiprop}\\

    \hline

    $43$ & $C_{42}$ & $3$ & $\{\pm1,\pm6,\pm7\}$ & $15$ & $\geq6$ & & \ref{Fp_gonality_deg5}\\
    & & & $\left<-1,27\right>$ & $9$ & $\geq6$ & $\geq5$ & \ref{Fp_gonality_deg5}, \ref{bettiprop}\\
    
    \hline

    $44$ & $C_2\times C_{10}$ & $-1,3$ & $\{\pm1,\pm21\}$ & $16$ & $\geq6$ & & \ref{Fp_gonality_deg5}\\
    & & & $\{\pm1,\pm5,\pm7,\pm9,\pm19\}$ & $8$ & $5$ & $5$ & \ref{magmamapprop}, \ref{bettiprop}\\

    \hline

    $45$ & $C_2\times C_{12}$ & $-1,2$ & $\{\pm1,\pm19\}$ & $21$ & $\geq6$ & & \ref{poonen}(vii)\\
    & & & $\{\pm1,\pm14,\pm16\}$ & $9$ & $\geq6$ & $\geq5$ & \ref{Fp_gonality_deg5}, \ref{bettiprop}\\
    & & & $\{\pm1,\pm8,\pm17,\pm19\}$ & $11$ & $\geq6$ & & \ref{Fp_gonality_deg5}\\
    & & & $\left<-1,4\right>$ & $5$ & $4$ & $4$ & \ref{genus5gonalmap}\\

    \hline

    $48$ & $C_2\times C_2\times C_4$ & $-1,5,7$ & $\{\pm1,\pm7\}$ & $19$ & $\geq6$ & $\geq6$ & \ref{CSprop}\\
    & & & $\{\pm1,\pm17\}$ & $19$ & $\geq6$ & $\geq6$ & \ref{CSprop}\\
    & & & $\{\pm1,\pm23\}$ & $13$ & $\geq6$ & & \ref{Fp_gonality_deg5}\\
    & & & $\{\pm1,\pm5,\pm19,\pm23\}$ & $7$ & $4$ & $4$ & \ref{quotientmapprop}\\
    & & & $\{\pm1,\pm7,\pm17,\pm23\}$ & $7$ & $4$ & $4$ & \ref{quotientmapprop}\\
    & & & $\{\pm1,\pm11,\pm13,\pm23\}$ & $5$ & $4$ & $4$ & \ref{genus5gonalmap}\\

    \hline

    $49$ & $C_{42}$ & $3$ & $\{\pm1,\pm18,\pm19\}$ & $19$ & $\geq6$ & & \ref{Fp_gonality_deg5}\\
    & & & $\left<-1,27\right>$ & $3$ & $3$ & $3$ & $g=3$, \ref{poonen}(iv)\\

    \hline

    $50$ & $C_{20}$ & $3$ & $\{\pm1,\pm7\}$ & $22$ & $\geq6$ & $\geq6$ & \ref{CSprop}\\
    & & & $\{\pm1,\pm9,\pm11,\pm19,\pm21\}$ & $4$ & $3$ & $3$ & \ref{CtrigonalQtetragonalthm}\\

    \hline

    $51$ & $C_2\times C_{16}$ & $-1,3$ & $\{\pm1,\pm16\}$ & $33$ & $\geq6$ & & \ref{poonen}(vii)\\
    & & & $\{\pm1,\pm4,\pm13,\pm16\}$ & $17$ & $\geq6$ & & \ref{poonen}(vii)\\
    & & & $\left<-1,9\right>$ & $9$ & $\geq6$ & $\geq5$ & \ref{Fp_gonality_deg5}, \ref{bettiprop}\\

    \hline

    $52$ & $C_2\times C_{12}$ & $-1,7$ & $\left<-1,7^3\right>$ & $13$ & $\geq6$ & & \ref{Fp_gonality_deg5}\\
    & & & $\left<-1,3\right>$ & $9$ & $\geq6$ & $\geq5$ & \ref{Fp_gonality_deg5}, \ref{bettiprop}\\
    & & & other & & & & \ref{poonen}(vii)\\

    \hline

    $53$ & $C_{52}$ & $2$ & $\{\pm1,\pm23\}$ & $40$ & $\geq6$ & & \ref{indexprop}\\
    & & & $\left<-1,4\right>$ & $8$ & $\geq6$ & $\geq5$ & \ref{Fp_gonality_deg5}, \ref{bettiprop}\\

    \hline

    $54$ & $C_{18}$ & $5$ & $\{\pm1,\pm17,\pm19\}$ & $10$ & $\geq6$ & & \ref{Fp_gonality_deg5}\\

    \hline

    $55$ & $C_2\times C_{20}$ & $2,21$ & $\{\pm1,\pm21\}$ & $41$ & $\geq6$ & & \ref{poonen}(vii)\\
    & & & $\{\pm1,\pm12,\pm21,\pm23\}$ & $21$ & $\geq6$ & & \ref{Fp_gonality_deg5}\\
    & & & $\{\pm1,\pm16,\pm19,\pm24,\pm26\}$ & $17$ & $\geq6$ & & \ref{Fp_gonality_deg5}\\
    & & & $\left<-1,4,21\right>$ & $9$ & $4$ & $4$ & \ref{magmamapprop}\\

    \hline

    $56$ & $C_2\times C_2\times C_6$ & $3,13,29$ & $\{\pm1,\pm13,\pm15,\pm27\}$ & $13$ & $\geq6$ & & \ref{Fp_gonality_deg5}\\
    & & & $\left<-1,9,13\right>$ & $11$ & $\geq6$ & & \ref{Fp_gonality_deg5}\\
    & & & $\left<-1,9,15\right>$ & $11$ & $\geq6$ & & \ref{Fp_gonality_deg5}\\
    & & & $\left<-1,3\right>$ & $9$ & $\geq6$ & $\geq5$ & \ref{Fp_gonality_deg5}, \ref{bettiprop}\\
    & & & other & & & & \ref{poonen}(vii)\\

    \hline

    $57$ & $C_2\times C_{18}$ & $2,20$ & $\left<-1,8,20\right>$ & $13$ & $\geq6$ & & \ref{Fp_gonality_deg5}\\
    & & & $\left<-1,2\right>$ & $9$ & $\geq6$ & $\geq5$ & \ref{Fp_gonality_deg5}, \ref{bettiprop}\\
    & & & other & & & & \ref{poonen}(vii)\\

    \hline

    $58$ & $C_{28}$ & $3$ & $\left<-1,3^7\right>$ & & $\geq6$ & $\geq6$ & \ref{indexprop}\\
    & & & $\left<-1,9\right>$ & $12$ & $\geq6$ & & \ref{Fp_gonality_deg5}\\

    \hline

    $60$ & $C_2\times C_2\times C_4$ & $7,11,19$ & $\{\pm1,\pm7,\pm11,\pm17\}$ & $15$ & $\geq6$ & & \ref{Fp_gonality_deg5}\\
    & & & $\{\pm1,\pm11,\pm13,\pm23\}$ & $15$ & $\geq6$ & & \ref{Fp_gonality_deg5}\\
    & & & $\{\pm1,\pm11,\pm19,\pm29\}$ & $13$ & $\geq6$ & & \ref{Fp_gonality_deg5}\\
    & & & other & & & & \ref{poonen}(vii)\\

    \hline

    $61$ & $C_{60}$ & $2$ & $\left<-1,2^5\right>$ & $16$ & $\geq6$ & & \ref{Fp_gonality_deg5}\\
    & & & $\left<-1,8\right>$ & $12$ & $\geq6$ & & \ref{Fp_gonality_deg5}\\
    & & & $\left<-1,4\right>$ & $8$ & $\geq6$ & $\geq5$ & \ref{Fp_gonality_deg5}, \ref{bettiprop}\\
    & & & other & & & & \ref{poonen}(vii)\\

    \hline

    $62$ & $C_{30}$ & $3$ & $\{\pm1,\pm5,\pm25\}$ & $31$ & $\geq6$ & $\geq6$ & \ref{CSprop}\\
    & & & $\{\pm1,\pm15,\pm23,\pm27,\pm29\}$ & $19$ & $\geq6$ & & \ref{Fp_gonality_deg5}\\

    \hline

    $63$ & $C_6\times C_6$ & $2,5$ & $\left<-1,2\right>$ & $17$ & $\geq6$ & & \ref{Fp_gonality_deg5}\\
    & & & $\left<-1,5\right>$ & $17$ & $\geq6$ & & \ref{Fp_gonality_deg5}\\
    & & & $\left<-1,8,10\right>$ & $17$ & $\geq6$ & & \ref{Fp_gonality_deg5}\\
    & & & $\left<-1,8,20\right>$ & $13$ & $\geq6$ & & \ref{Fp_gonality_deg5}\\
    & & & $\left<-1,4,5\right>$ & $9$ & $\geq6$ & $\geq5$ & \ref{Fp_gonality_deg5}, \ref{bettiprop}\\
    & & & other & & & & \ref{poonen}(vii)\\

    \hline

    $64$ & $C_2\times C_{16}$ & $-1,3$ & $\{\pm1,\pm31\}$ & $37$ & $\geq6$ & & \ref{poonen}(vii)\\
    & & & $\{\pm1,\pm15,\pm17,\pm31\}$ & $13$ & $\geq6$ & & \ref{Fp_gonality_deg5}\\
    & & & $\left<-1,9\right>$ & $5$ & $4$ & $4$ & \ref{genus5gonalmap}\\

    \hline

    $65$ & $C_4\times C_{12}$ & $2,12$ & $\left<-1,8,12\right>$ & $13$ & $\geq6$ & & \ref{Fp_gonality_deg5}\\
    & & & $\left<-1,4,12\right>$ & $11$ & $\geq6$ & & \ref{Fp_gonality_deg5}\\
    & & & $\left<-1,4,24\right>$ & $11$ & $\geq6$ & & \ref{Fp_gonality_deg5}\\
    & & & $\left<-1,2\right>$ & $9$ & $\geq6$ & $\geq5$ & \ref{Fp_gonality_deg5}, \ref{bettiprop}\\
    & & & other & & & & \ref{poonen}(vii)\\

    \hline

    $66$ & $C_2\times C_{10}$ & $-1,5$ & $\left<-1,25\right>$ & $17$ & $\geq6$ & & \ref{Fp_gonality_deg5}\\
    & & & $\left<-1,5^5\right>$ & & $\geq6$ & $\geq6$ & \ref{indexprop}\\

    \hline

    $67$ & $C_{66}$ & $2$ & $\left<-1,2^{11}\right>$ & & $\geq6$ & $\geq6$ & \ref{indexprop}\\
    & & & $\left<-1,8\right>$ & $15$ & $\geq6$ & & \ref{Fp_gonality_deg5}\\

    \hline

    $68$ & $C_2\times C_{16}$ & $-1,3$ & $\left<-1,9\right>$ & $13$ & $\geq6$ & & \ref{Fp_gonality_deg5}\\

    \hline

    $69$ & $C_2\times C_{22}$ & $-1,2$ & $\{\pm1,\pm22\}$ & $67$ & $\geq6$ & $\geq6$ & \ref{indexprop}\\
    & & & $\left<-1,4\right>$ & $13$ & $\geq6$ & & \ref{Fp_gonality_deg5}\\

    \hline

    $70$ & $C_2\times C_{12}$ & $-1,3$ & $\left<-1,27\right>$ & $25$ & $\geq6$ & & \ref{Fp_gonality_deg5}\\
    & & & $\left<-1,9\right>$ & $17$ & $\geq6$ & & \ref{Fp_gonality_deg5}\\
    & & & other & & & & \ref{poonen}(vii)\\

    \hline

    $71$ & $C_{70}$ & $7$ & $\{\pm1,\pm5\pm14,\pm17,\pm25\}$ & $36$ & $\geq6$ & & \ref{Fp2pointscomputation}\\
    & & & $\left<-1,7^5\right>$ & $26$ & $\geq6$ & & \ref{Fp_gonality_deg5}\\

    \hline

    $72$ & $C_2\times C_2\times C_6$ & $-1,5,17$ & $\{\pm1,\pm17,\pm19,\pm35\}$ & $21$ & $\geq6$ & $\geq6$ & \ref{CSprop}\\
    & & & $\left<-1,5\right>$ & $13$ & $\geq6$ & & \ref{Fp_gonality_deg5}\\
    & & & $\left<-1,17,25\right>$ & $13$ & $\geq6$ & & \ref{Fp_gonality_deg5}\\
    & & & $\left<-1,13,25\right>$ & $9$ & $\geq6$ & $\geq5$ & \ref{Fp_gonality_deg5}, \ref{bettiprop}\\
    & & & other & & & & \ref{poonen}(vii)\\

    \hline

    $73$ & $C_{72}$ & $5$ & $\left<-1,5^3\right>$ & $13$ & $\geq6$ & & \ref{Fp_gonality_deg5}\\
    & & & $\left<-1,25\right>$ & $9$ & $\geq6$ & $\geq5$ & \ref{Fp_gonality_deg5}, \ref{bettiprop}\\
    & & & other & & & & \ref{poonen}(vii)\\

    \hline

    $74$ & $C_{36}$ & $5$ & $\left<-1,5^3\right>$ & $22$ & $\geq6$ & $\geq6$ & \ref{CSprop}\\
    & & & $\left<-1,25\right>$ & $16$ & $\geq6$ & & \ref{Fp_gonality_deg5}\\
    & & & other & & & & \ref{poonen}(vii)\\

    \hline

    $75$ & $C_2\times C_{20}$ & $-1,2$ & $\{\pm1,\pm26\}$ & $73$ & $\geq6$ & & \ref{poonen}(vii)\\
    & & & $\{\pm1,\pm7,\pm26,\pm32\}$ & $37$ & $\geq6$ & $\geq6$ & \ref{indexprop}\\
    & & & $\{\pm1,\pm14,\pm16,\pm29,\pm31\}$ & $17$ & $\geq6$ & & \ref{Fp_gonality_deg5}\\
    & & & $\left<-1,4\right>$ & $9$ & $4$ & $4$ & \ref{quotientmapprop}\\

    \hline

    $77$ & $C_2\times C_{30}$ & $-1,2$ & $\left<-1,32\right>$ & $31$ & $\geq6$ & & \ref{Fp_gonality_deg5}\\
    & & & $\left<-1,8\right>$ & $19$ & $\geq6$ & & \ref{Fp_gonality_deg5}\\
    & & & $\left<-1,4\right>$ & $13$ & $\geq6$ & & \ref{Fp_gonality_deg5}\\
    & & & other & & & & \ref{poonen}(vii)\\

    \hline

    $78$ & $C_2\times C_{12}$ & $5,7$ & $\left<-1,5,7^3\right>$ & $31$ & $\geq6$ & & \ref{Fp2pointscomputation}\\
    & & & $\left<-1,35,49\right>$ & $31$ & $\geq6$ & & \ref{Fp_gonality_deg5}\\
    & & & other & & & & \ref{poonen}(vii)\\

    \hline

    $79$ & $C_{78}$ & $3$ & $\{\pm1,\pm23,\pm24\}$ & $66$ & $\geq6$ & $\geq6$ & \ref{indexprop}\\
    & & & $\left<-1,27\right>$ & $18$ & $\geq6$ & & \ref{Fp_gonality_deg5}\\

    \hline

    $80$ & $C_2\times C_4\times C_4$ & $-1,3,7$ & $\left<-1,7,9\right>$ & $15$ & $\geq6$ & & \ref{Fp_gonality_deg5}\\
    & & & $\left<-1,3,49\right>$ & $15$ & $\geq6$ & & \ref{Fp_gonality_deg5}\\
    & & & $\left<-1,21,49\right>$ & $13$ & $\geq6$ & & \ref{Fp_gonality_deg5}\\
    & & & other & & & & \ref{poonen}(vii)\\

    \hline

    $81$ & $C_{54}$ & $2$ & $\{\pm1,\pm26,\pm28\}$ & $46$ & & & \ref{poonen}(vii)\\
    & & & $\left<-1,8\right>$ & $10$ & $\geq6$ & & \ref{Fp_gonality_deg5}\\

    \hline

    $85$ & $C_4\times C_{16}$ & $3,13$ & $\left<-1,3\right>$ & $15$ & $\geq6$ & & \ref{Fp_gonality_deg5}\\
    & & & $\left<-1,9,13\right>$ & $13$ & $\geq6$ & & \ref{Fp_gonality_deg5}\\
    & & & other & & & & \ref{poonen}(vii)\\

    \hline

    $87$ & $C_2\times C_{28}$ & $-1,2$ & $\left<-1,2^7\right>$ & & $\geq6$ & $\geq6$ & \ref{indexprop}\\
    & & & $\left<-1,4\right>$ & $17$ & $\geq6$ & & \ref{Fp_gonality_deg5}\\

    \hline

    $88$ & $C_2\times C_2\times C_{10}$ & $-1,5,21$ & $\left<-1,21,5^5\right>$ & $41$ & $\geq6$ & $\geq6$ & \ref{indexprop}\\
    & & & $\left<-1,21,25\right>$ & $19$ & $\geq6$ & & \ref{Fp2pointscomputation}\\
    & & & $\left<-1,25,105\right>$ & $19$ & $\geq6$ & & \ref{Fp_gonality_deg5}\\
    & & & $\left<-1,5\right>$ & $17$ & $\geq6$ & & \ref{Fp_gonality_deg5}\\
    & & & other & & & & \ref{poonen}(vii)\\

    \hline

    $89$ & $C_{88}$ & $3$ & $\{\pm1,\pm34\}$ & $133$ & $\geq6$ & & \ref{poonen}(vii)\\
    & & & $\{\pm1,\pm12,\pm34,\pm37\}$ & $67$ & $\geq6$ & $\geq6$ & \ref{indexprop}\\
    & & & $\left<-1,3^4\right>$ & $27$ & $\geq6$ & & \ref{poonen}(vii)\\
    & & & $\left<-1,9\right>$ & $13$ & $\geq6$ & & \ref{Fp_gonality_deg5}\\

    \hline

    $91$ & $C_6\times C_{12}$ & $2,12$ & $\left<-1,8,12\right>$ & $23$ & $\geq6$ & & \ref{Fp_gonality_deg5}\\
    & & & $\left<-1,8,48\right>$ & $23$ & $\geq6$ & & \ref{Fp_gonality_deg5}\\
    & & & $\left<-1,8,24\right>$ & $21$ & $\geq6$ & & \ref{Fp_gonality_deg5}\\
    & & & $\left<-1,2\right>$ & $21$ & $\geq6$ & & \ref{Fp2pointscomputation}\\
    & & & $\left<-1,4,12\right>$ & $13$ & $\geq6$ & & \ref{Fp_gonality_deg5}\\
    & & & other & & & & \ref{poonen}(vii)\\

    \hline

    $92$ & $C_2\times C_{22}$ & $-1,3$ & $\{\pm1,\pm45\}$ & $100$ & $\geq6$ & $\geq6$ & \ref{indexprop}\\
    & & & $\left<-1,9\right>$ & $20$ & $\geq6$ & & \ref{Fp_gonality_deg5}\\

    \hline

    $95$ & $C_2\times C_{36}$ & $-1,2$ & $\left<-1,8\right>$ & $25$ & $\geq6$ & & \ref{Fp_gonality_deg5}\\
    & & & $\left<-1,4\right>$ & $17$ & $\geq6$ & & \ref{Fp_gonality_deg5}\\
    & & & other & & & & \ref{poonen}(vii)\\

    \hline

    $96$ & $C_2\times C_2\times C_8$ & $-1,5,17$ & $\left<-1,17,25\right>$ & $17$ & $\geq6$ & & \ref{Fp_gonality_deg5}\\
    & & & $\left<-1,25,85\right>$ & $17$ & $\geq6$ & & \ref{Fp_gonality_deg5}\\
    & & & $\left<-1,5\right>$ & $17$ & $\geq6$ & & \ref{Fp2pointscomputation}\\
    & & & other & & & & \ref{poonen}(vii)\\

    \hline

    $98$ & $C_{42}$ & $3$ & $\left<-1,3^7\right>$ & & $\geq6$ & $\geq6$ & \ref{indexprop}\\
    & & & $\left<-1,27\right>$ & $19$ & $\geq6$ & $\geq6$ & \ref{CSprop}\\

    \hline

    $100$ & $C_2\times C_{20}$ & $-1,3$ & $\left<-1,3^5\right>$ & & $\geq6$ & $\geq6$ & \ref{indexprop}\\
    & & & $\left<-1,9\right>$ & $12$ & $\geq6$ & & \ref{Fp_gonality_deg5}\\

    \hline

    $101$ & $C_{100}$ & $2$ & $\left<-1,32\right>$ & $36$ & $\geq6$ & $\geq6$ & \ref{indexprop}\\
    & & & $\left<-1,4\right>$ & $16$ & $\geq6$ & & \ref{Fp_gonality_deg5}\\
    & & & other & & & & \ref{poonen}(vii)\\

    \hline

    $103$ & $C_{102}$ & $5$ & $\left<5^{17}\right>$ & & $\geq6$ & $\geq6$ & \ref{indexprop}\\
    & & & $\left<-1,5^3\right>$ & $24$ & $\geq6$ & & \ref{Fp_gonality_deg5}\\
    & & & other & & & & \ref{poonen}(vii)\\

    \hline

    $104$ & $C_2\times C_2\times C_{12}$ & $-1,15,51$ & $\left<-1,15^3,51\right>$ & $31$ & $\geq6$ & & \ref{Fp2pointscomputation}\\
    & & & $\left<-1,15\right>$ & $23$ & $\geq6$ & & \ref{Fp_gonality_deg5}\\
    & & & $\left<-1,15^2,15\cdot51\right>$ & $23$ & $\geq6$ & & \ref{Fp_gonality_deg5}\\
    & & & $\left<-1,15^2,51\right>$ & $21$ & $\geq6$ & & \ref{Fp2pointscomputation}\\
    & & & other & & & & \ref{poonen}(vii)\\

    \hline

    $109$ & $C_{108}$ & $6$ & $\left<-1,6^3\right>$ & $22$ & $\geq6$ & & \ref{Fp_gonality_deg5}\\
    & & & $\left<-1,36\right>$ & $16$ & $\geq6$ & & \ref{Fp_gonality_deg5}\\
    & & & other & & & & \ref{poonen}(vii)\\

    \hline

    $111$ & $C_2\times C_{36}$ & $-1,2$ & $\left<-1,8\right>$ & $31$ & $\geq6$ & & \ref{Fp_gonality_deg5}\\
    & & & $\left<-1,4\right>$ & $21$ & $\geq6$ & & \ref{Fp_gonality_deg5}\\
    & & & other & & & & \ref{poonen}(vii)\\

    \hline

    $119$ & $C_2\cdot C_{48}$ & $-1,3$ & $\left<-1,27\right>$ & $31$ & $\geq6$ & & \ref{Fp_gonality_deg5}\\
    & & & $\left<-1,9\right>$ & $21$ & $\geq6$ & & \ref{Fp_gonality_deg5}\\
    & & & other & & & & \ref{poonen}(vii)\\

    \hline

    $121$ & $C_{110}$ & $2$ & $\left<-1,2^{11}\right>$ & & $\geq6$ & $\geq6$ & \ref{indexprop}\\
    & & & $\left<-1,32\right>$ & & $\geq6$ & $\geq6$ & \ref{indexprop}\\

    \hline

    $125$ & $C_{100}$ & $2$ & $\left<-1,32\right>$ & & $\geq6$ & $\geq6$ & \ref{indexprop}\\
    & & & $\left<-1,4\right>$ & $16$ & $5$ & $5$ & \ref{Fp_gonality_deg5}, \ref{quotientmapprop}\\
    & & & other & & & & \ref{poonen}(vii)\\

    \hline

    $131$ & $C_{130}$ & $2$ & all $\Delta$ & & $\geq6$ & $\geq6$ & \ref{indexprop}\\

    \hline

    $142$ & $C_{70}$ & $7$ & all $\Delta$ & & $\geq6$ & $\geq6$ & \ref{indexprop}\\

    \hline

    $143$ & $C_2\times C_{60}$ & $-1,2$ & $\left<-1,32\right>$ & & $\geq6$ & $\geq6$ & \ref{indexprop}\\
    & & & $\left<-1,8\right>$ & $37$ & $\geq6$ & & \ref{Fp2pointscomputation}\\
    & & & $\left<-1,4\right>$ & $25$ & $\geq6$ & & \ref{Fp_gonality_deg5}\\
    & & & other & & & & \ref{poonen}(vii)\\

    \hline

    $191$ & $C_{190}$ & $19$ & all $\Delta$ & & $\geq6$ & $\geq6$ & \ref{indexprop}\\
   \hline

\end{longtable}
\end{center}

\bibliographystyle{siam}
\bibliography{bibliography1}

\end{document}